\def\vep{\varepsilon}
\newtheorem{lem}{Lemma}[section]
\newtheorem{dfn}[lem]{Definition}
\newtheorem{pro}[lem]{Proposition}
\newtheorem{thm}[lem]{Theorem}
\newtheorem{cor}[lem]{Corollary}
\theoremstyle{remark}
\DeclareMathOperator\card{Card}
\numberwithin{equation}{section}
\begin{document}
\title[Hausdorff dimensions of sets related to Erd\"{o}s-R\'{e}nyi   averages]{Hausdorff dimensions of sets related to Erd\"{o}s-R\'{e}nyi averages in beta expansions}
\author{Haibo Chen}
 \address{School of Statistics and Mathematics, Zhongnan University of Economics and Law, Wu\-han, Hubei, 430073, China}
  \email{hiboo\_chen@sohu.com}

\begin{abstract}
Let $\beta>1$, $I$ be the unite interval $[0,1)$ and $\phi$ be an integer function defined on $\mathbb{N}\setminus\{0\}$ satisfying $1\leq\phi(n)\leq n$. Denote by $A_\phi(x,\beta)$ the Erd\"{o}s-R\'{e}nyi average of $x\in I$ associated with the function $\phi$ in $\beta$-expansion and $I_\beta$ the range of $A_\phi(x,\beta)$ for $x\in I$. For the level set 
\begin{align*}
	ER_\phi^\beta(\alpha)=\left\{x\in I\colon A_\phi(x,\beta)=\alpha\right\},\quad\text{where}\ \alpha\in I_\beta,
\end{align*}
in this paper we will determine its Hausdorff dimension under the assumption $\phi(n)\to\infty$ as $n\to\infty$ and $\phi$ is the integer part of some slowly varying sequence. Besides, a generalization to the classic work \cite{Be} of Besicovitch is also given in $\beta$-expansion.
\end{abstract}

\subjclass[2010]{Primary 11K55; Secondary 28A80.}
\keywords{Erd\"{o}s-R\'{e}nyi average, Hausdorff dimension, $\beta$-adic entropy function, Besicovitch set, $\alpha$-Moran set.}
\maketitle

\section{Introduction}
Let $\beta>1$ be a real number and $I$ be the unit interval $[0,1)$. Define the \emph{$\beta$-transformation} $T_\beta\colon I\to I$ as
\[T_\beta(x)=\beta x-[\beta x],\quad x\in I.\] 
Here, $[\cdot]$ is the floor function. It is well-known (see \cite{R}) that each $x\in I$ can be uniquely expanded into a finite or an infinite series as
\begin{align}\label{formula beta 1}
	x=\frac{\vep_1(x,\beta)}{\beta}+\frac{\vep_2(x,\beta)}{\beta^2}+\cdots+\frac{\vep_n(x,\beta)}{\beta^n}+\cdots,
\end{align}
where $\vep_n(x,\beta)=\big[\beta T_\beta^{n-1}(x)\big]$, $n\geq1$, is called the $n$-th digit of $x$ with respect to base $\beta$. For simplicity, we can also identify $x$ with the \emph{digit sequence}
\[\vep(x,\beta):=\big(\vep_1(x,\beta),\vep_2(x,\beta),\ldots\big).\]
 That is, we can rewrite \eqref{formula beta 1} as
\begin{align}\label{formula beta 2}
	x=\big(\vep_1(x,\beta),\vep_2(x,\beta),\ldots\big).
\end{align}
The formulas \eqref{formula beta 1} or \eqref{formula beta 2} is called the $\beta$-expansion of $x$ and the system $(I,T_\beta)$ is called the $\beta$-dynamical system.

It is clear that each $n$-th digit $\vep_n(x,\beta)$, $n\geq1$, of $x$ belongs to the alphabet $\Sigma=\{0,1,\ldots,\lceil\beta-1\rceil\}$, where $\lceil\cdot\rceil$ is the ceiling function. Denote by $\Sigma^\infty$ the set of infinite sequences with all digits from $\Sigma$. 
It is worth noting that not all sequences belong to $\Sigma^\infty$ would be the $\beta$-expansion of some $x\in I$.
Moreover, endowed with the metric 
\begin{align}
	d(\vep,\eta)=\beta^{-\min\{i\geq1\colon\vep_i\neq\eta_i\}},
\end{align}
where $\vep=(\vep_1,\vep_2,\ldots)\in\Sigma^\infty$ and $\eta=(\eta_1,\eta_2,\ldots)\in\Sigma^\infty$, the space $\Sigma^\infty$ is compact.

In the present paper, we will say the terminology word to mean a finite sequence, $n$-word to mean a finite sequence of length $n$ and meanwhile sequence to mean a infinite sequence for the sake of distinction.

Let $n\geq1$. We call an $n$-word $(\vep_1,\vep_2,\ldots,\vep_n)$ or a sequence $(\vep_1,\vep_2,\ldots,\vep_n,\ldots)$ \emph{$\beta$-admissible} if there exists an $x\in I$ such that the $\beta$-expansion of $x$ begins with this word or is just this sequence. Denote by $\Sigma_\beta^n$, $n\geq1$, the set of all $\beta$-admissible words with length $n$ and $\Sigma_\beta$ the set of all admissible sequences. That is,
\[\Sigma_\beta=\left\{\vep\in\Sigma^\infty\colon \vep\ \text{is the $\beta$-expansion of some $x\in I$}\right\}.\]

After the introduction of the concept of $\beta$-expansion, we would like to introduce the concept of Erd\"{o}s-R\'{e}nyi average in $\beta$-expansion. Let $x\in I$ and be of infinite $\beta$-expansion $\big(\vep_1(x,\beta),\vep_2(x,\beta),\ldots\big)$. Note that in the sequel we only need to deal with those numbers in $I$ with infinite $\beta$-expansions while the set of numbers with finite $\beta$-expansions is countable. Denoted by \[S_n(x,\beta)=\sum_{i=1}^n\vep_i(x,\beta),\quad n\geq1,\]
the sum of the first $n$ digits of $x$ or the $n$-th \emph{partial sum} of $x$. Let $\phi$ be an integer function defined on $\mathbb{N}\setminus\{0\}$ satisfying $1\leq\phi(n)\leq n$. Put
\begin{align}\label{ER result}
	I_{n,\phi(n)}(x,\beta)=\max_{0\leq i\leq n-\phi(n)}\big(S_{i+\phi(n)}(x,\beta)-S_i(x,\beta)\big)
\end{align}
and call it the \emph{$(n,\phi(n))$-Erd\"{o}s-R\'{e}nyi maximum partial sum} of $x$. Here, $S_0(x,\beta)=0$ is set by convention. Accordingly, call
\begin{align}
	A_{n,\phi(n)}(x,\beta)=\frac{I_{n,\phi(n)}(x,\beta)}{\phi(n)}
\end{align}
the \emph{$(n,\phi(n))$-Erd\"{o}s-R\'{e}nyi average} of $x$ and \begin{align}
	A_\phi(x,\beta)=\lim_{n\to\infty}A_{n,\phi(n)}(x,\beta)
\end{align} 
the \emph{Erd\"{o}s-R\'{e}nyi average} of $x$ associated with $\phi$ if the limit exists.
In particular, take $\phi(n)=1$, then \[I_{n,1}(x,\beta)=\max\{\vep_i(x,\beta),1\leq i\leq n\};\]
take  $\phi$ to be the identity function $\phi_I$ on $\mathbb{N}\setminus\{0\}$, i.e., $\phi_I(n)=n$, $n\geq1$, then $I_{n,\phi_I}(x,\beta)=S_n(x,\beta)$. So, we have
\begin{align}\label{AnnSn}
	A_{n,\phi_I}(x,\beta)=\frac{S_n(x,\beta)}{n}\quad\text{and}\quad A_{\phi_I}(x,\beta)=\lim_{n\to\infty}\frac{S_n(x,\beta)}{n}.
\end{align}
Thus, the Erd\"{o}s-R\'{e}nyi average is a more general concept than the usual algebraic average. In addition, in what follows we will write, respectively, $A_n(x,\beta)$ instead of $A_{n,\phi_I}(x,\beta)$ and $A(x,\beta)$ instead of $A_{\phi_I}(x,\beta)$ for brevity.

Erd\"{o}s-R\'{e}nyi average was first introduced by P. Erd\"{o}s and A. R\'{e}nyi \cite{ER} in 1970, which gave a pioneering work, by establishing a kind of new strong law of large numbers, on the limit behaviors of the length of the longest run of heads in $n$~independent Bernoulli trials. After that, many work emerged during the last several decades by considering, for examples, the asymptotic distribution of the length of the longest head run~\cite{Deh,ERev,GSW}, the appearances of long repetitive sequences in random sequences~\cite{GO}, the rate of convergence for a stationary sequence~\cite{No} and the case of renewal counting process~\cite{St}, etc.  In this paper, we would like to study the level sets described by the Erd\"{o}s-R\'{e}nyi average and determine their Hausdorff dimensions, which generalizes the work in~\cite{CDL} form binary expansion to $\beta$-expansion. In addition, some generalizations to the classic work \cite{Be} of Besicovitch in $\beta$-expansion are included as well. 
 
Let $\beta>1$. It is well-known (see \cite{R}) that there exists a unique invariant measure $\mu_\beta$, which is equivalent to the Lebesgue measure $\mathcal{L}$, when $\beta$ is not an integer. Moreover, $T_\beta$ is ergodic with respect to $\mu_\beta$ (see \cite{DK}). So, by Birkhoff's ergodic theorem, we have
\begin{align}\label{Ax beta}
	A(x,\beta)=\lim_{n\to\infty}\frac{1}{n}\sum_{i=0}^{n-1}\vep_1(T_\beta^ix,\beta)=\int_I\vep_1(x,\beta)d\mu_\beta=:\alpha^\ast(\beta),\quad \mu_\beta\text{-a.e.}\ x\in I.
\end{align}
Note that if we replace $\mu_\beta$ by the Lebesgue measure, then \eqref{Ax beta} is also valid when $\beta$ is an integer. In this case, we have $\alpha^\ast(\beta)=(\beta-1)/2$.  

Denote
\[\Lambda(\beta)=\sup_{x\in I}\bar{A}(x,\beta)\quad\text{and}\quad I_\beta=[0,\Lambda(\beta)],\]
where $\bar{A}(x,\beta)=\limsup_{n\to\infty}A_n(x,\beta)$, see also the definition~\eqref{definition axb} in Section 3. It is clear that $I_\beta=[0,\beta-1]$ when $\beta$ is an integer and $I_\beta\subset[0,\lceil\beta-1\rceil]$ when $\beta$ is not an integer. For a given $\alpha\in I_\beta$, define the level set
\begin{align}
  ER_\phi^\beta(\alpha)=\{x\in I\colon A_\phi(x,\beta)=\alpha\}.
\end{align}
In the following, we will show the Hausdorff dimension of  $ER_\phi^\beta(\alpha)$ after introducing some notations.

Let $\beta>1$, $\alpha\in I_\beta$, $n\geq1$ and $\delta>0$. Denote
\[H^\beta(\alpha,n,\delta)=\left\{(\vep_1,\vep_2,\ldots,\vep_n)\in \Sigma_\beta^n\colon n(\alpha-\delta)<\sum_{i=1}^n\vep_i<n(\alpha+\delta)\right\}\]
and $h^\beta(\alpha,n,\delta)=\card H^\beta(\alpha,n,\delta)$, where the symbol $\card$ denotes the cardinality of a set. 

Let $(\vep_1^*(1,\beta),\vep_2^*(1,\beta),\ldots)$ be the infinite $\beta$-expansion of 1 introduced in the beginning of Section 2. For each $m$ with $\vep_m^*(1,\beta)\geq1$, define $\beta_m=\beta_m(\beta)$ to be the unique positive root of the equation
\begin{align}\label{definition beta m}
	1=\frac{\vep_1^*(1,\beta)}{\beta_m^1}+\frac{\vep_2^*(1,\beta)}{\beta_m^2}+\cdots+\frac{\vep_m^*(1,\beta)}{\beta_m^m}.
\end{align}
Then we have $\beta_m<\beta$ and $\beta_m$ increases to $\beta$ as $m\to\infty$. Here and in the sequel, we always assume that $m$ takes value in $\{m\colon\vep_m^*(1,\beta)\geq1\}$ unless otherwise noted. Moreover, if $m_1<m_2$ and both of $\beta_{m_1}$ and $\beta_{m_2}$ are roots of the equation \eqref{definition beta m}, then the increasing property
\[\Sigma_{\beta_{m_1}}\subset\Sigma_{\beta_{m_2}}\subset\Sigma_\beta\]
holds by (2) in Theorem \ref{theorem parry}.

Then, define the $\beta$-adic entropy function as
\begin{align}\label{definiton h alpha}
	h^\beta(\alpha)=\lim_{\delta\to0}\lim_{m\to\infty}\liminf_{n\to\infty}\frac{\log h^{\beta_m}(\alpha,n,\delta)}{(\log \beta)n}=\lim_{\delta\to0}\lim_{m\to\infty}\limsup_{n\to\infty}\frac{\log h^{\beta_m}(\alpha,n,\delta)}{(\log \beta)n}.
\end{align}
Note that in \eqref{definiton h alpha} the limits for $\delta$ and $m$ both exist since $h^{\beta_m}(\alpha,n,\delta)$ is increasing for these two variables and the definition is valid since the second equality holds according to Proposition 4.2 in \cite{TWWX}. 

The function $\phi$, we focus on in this paper, is limited to a kind of special sequence, called slowly varying sequence. For the definition and corresponding properties, one can see Definition~\ref{definition slow increase} and Lemma~\ref{lem increse} in Section 6. Now, we would like to state the following main result in the present paper where $\dim_H$ denotes the Hausdorff dimension of a set.

\begin{thm}\label{theorem main theorem}
	Let $\beta>1$ and $\alpha\in I_\beta$. Assume the sequence $\{\theta(n)\}_{n\geq1}$ is slowly varying and $\theta(n)\to\infty$ as $n\to\infty$. If $\phi(n)=[\theta(n)]$, $n\geq1$, then we have
	\begin{align}\label{formula main}
		\dim_HER_\phi^\beta(\alpha)=\begin{cases} h^\beta(\alpha),\indent &0\leq\alpha\leq\alpha^\ast(\beta);\\
			1,\indent &\alpha^\ast(\beta)<\alpha\leq\Lambda(\beta).\end{cases} 
	\end{align}
\end{thm}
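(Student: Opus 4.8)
The plan is to prove matching upper and lower bounds, treating the two ranges of $\alpha$ separately and exploiting throughout the elementary fact that the maximal window sum dominates the global average. Tiling $\{1,\dots,n\}$ by $\lfloor n/\phi(n)\rfloor$ consecutive blocks of length $\phi(n)$ shows that $A_{n,\phi(n)}(x,\beta)\ge S_n(x,\beta)/n-o(1)$, because $\phi(n)=o(n)$; hence $A_\phi(x,\beta)=\alpha$ forces $\limsup_n S_n(x,\beta)/n\le\alpha$, a global frequency constraint that will drive the upper bound.

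For the upper bound when $0\le\alpha\le\alpha^\ast(\beta)$, I would use the inclusion $ER_\phi^\beta(\alpha)\subseteq\{x\in I\colon\bar A(x,\beta)\le\alpha\}$ just obtained. Fixing $\delta>0$, every $x$ in the right-hand set has all sufficiently long prefixes satisfying $S_n(x,\beta)<n(\alpha+\delta)$, so at each large generation $n$ the set is covered by the cylinders of the $\beta$-admissible $n$-words of digit sum below $n(\alpha+\delta)$. For $\alpha+\delta\le\alpha^\ast(\beta)$ their number is governed by the top term and is thus comparable to $\beta^{nh^\beta(\alpha+\delta)}$, while each such cylinder has diameter $\asymp\beta^{-n}$; this covering bounds the $s$-dimensional Hausdorff measure for every $s>h^\beta(\alpha+\delta)$, and letting $\delta\to0$ with the continuity of $h^\beta$ gives $\dim_H ER_\phi^\beta(\alpha)\le h^\beta(\alpha)$. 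For $\alpha>\alpha^\ast(\beta)$ the bound $\dim_H\le1$ is automatic.

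The lower bound is the heart of the matter, where I would build an $\alpha$-Moran subset of $ER_\phi^\beta(\alpha)$. Set $\rho=\min\{\alpha,\alpha^\ast(\beta)\}$ and work inside a subsystem $\Sigma_{\beta_m}\subset\Sigma_\beta$, so that admissible words concatenate freely and the dimension is recovered by letting $m\to\infty$. Along a rapidly increasing sequence of scales I would assemble $x$ from \emph{balanced} admissible blocks: blocks of digit frequency $\rho$ in which \emph{every} subwindow of the relevant length $\approx\phi(\cdot)$ has average at most $\rho+\delta$ (respectively $\alpha+\delta$ when $\alpha>\alpha^\ast(\beta)$). The decisive counting input is that imposing such a one-sided window constraint, with threshold strictly above the mean $\rho$, costs only an entropy of order $e^{-\phi I}$ per symbol, which vanishes because $\phi(n)\to\infty$; consequently the admissible balanced blocks still grow at the rate $h^{\beta_m}(\rho)$, and a homogeneous Moran construction over them, evaluated by the mass distribution principle, attains dimension $h^{\beta_m}(\rho)\to h^\beta(\rho)$. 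When $\alpha>\alpha^\ast(\beta)$ one has $\rho=\alpha^\ast(\beta)$ and $h^\beta(\rho)=1$, already giving dimension $1$; to pin the average at $\alpha$ rather than at $\alpha^\ast(\beta)$ I would additionally implant, once per scale, a single window of length $\phi(\cdot)$ whose average is $\approx\alpha$, a sparse perturbation affecting neither the frequency nor the dimension.

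It remains to certify that $A_\phi(x,\beta)=\alpha$ on the constructed set, and this is the step I expect to be the main obstacle. The upper control $\limsup_n A_{n,\phi(n)}\le\alpha$ follows from the balanced property, since a window of length $\phi(n)$ straddling two consecutive blocks inherits an average at most $\rho+O(\delta)$ from the internal flatness of each; the lower control $\liminf_n A_{n,\phi(n)}\ge\alpha$ follows from the mean being $\rho=\alpha$ when $\alpha\le\alpha^\ast(\beta)$ and from the implanted windows when $\alpha>\alpha^\ast(\beta)$. The genuine difficulty is that the window length $\phi(n)$ drifts with $n$, so the block design used at one scale must stay compatible with the constraint at all nearby scales; here the hypothesis $\phi=[\theta]$ with $\theta$ slowly varying is essential, for by Lemma~\ref{lem increse} the ratio $\phi(n')/\phi(n)$ remains close to $1$ over the long range of $n$ spanned by a single Moran generation, so one balanced block design services every window length that occurs. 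Finally, sending $\delta\to0$ and $m\to\infty$ and matching the resulting rate with the definition~\eqref{definiton h alpha} of $h^\beta$ yields the claimed dimension.
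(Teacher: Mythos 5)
Your overall architecture matches the paper's: the upper bound via the tiling inequality $A_{n,\phi(n)}(x,\beta)\ge S_n(x,\beta)/n-o(1)$, the inclusion $ER_\phi^\beta(\alpha)\subset\bar E^\beta(\alpha)$ and a cylinder-covering count (this is exactly Proposition~\ref{prop E}), and the lower bound via Moran subsets inside $\Sigma_{\beta_m}$ with $m\to\infty$. The genuine gap is in the mechanism you propose for certifying $A_\phi(x,\beta)=\alpha$ on the constructed set, which you yourself flag as the main obstacle. A block design that constrains only subwindows of a single ``relevant length $\approx\phi(\cdot)$'' does not control the maximal window average: a window of length $\phi$ straddling two consecutive blocks is a suffix of one plus a prefix of the next, and knowing that every length-$\phi$ window \emph{inside} each block has average at most $\rho+\delta$ only bounds each piece's sum by $\phi(\rho+\delta)$, so the straddling window may have average as large as $2(\rho+\delta)$. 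Moreover $\phi(n)$ grows without bound over the whole construction, so no single design length serves all $n$: slow variation gives $\phi(\lambda n)/\phi(n)\to1$ only for fixed $\lambda$, while digits laid down in the first generation still lie inside windows of length $\phi(n)$ for arbitrarily large $n$. The paper resolves both problems at once by imposing \emph{exact} digit sums $[\alpha 2^i(N+M)]$ or $[\alpha 2^i(N+M)]+1$ on every dyadic sub-block at every scale $i$ (Lemma~\ref{remark decompose}); any window of length $\phi(n)\in[K2^r(N+M),(K+1)2^r(N+M))$ is then sandwiched between $K-2$ and $K+1$ complete $r$-scale blocks, and letting $K\to\infty$ and then $r\to\infty$ pins the maximum at $\alpha$ for every $\phi$ with $\phi(n)\to\infty$. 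This also removes the need for your large-deviation count of ``balanced'' words, whose cardinality claim (that a one-sided window constraint costs no entropy) is plausible but unproven as stated because the windows are strongly correlated; the paper instead counts words with prescribed sums directly via Lemmas~\ref{lemma logcard}--\ref{lemma alpha M}.

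For $\alpha^\ast(\beta)<\alpha\le\Lambda(\beta)$, implanting ``a single window of length $\phi(\cdot)$ of average $\approx\alpha$'' is likewise too weak: since $I_{n,\phi(n)}$ is a maximum over all windows, a subwindow of your implant with average exceeding $\alpha$ ruins the upper control, while a window of some other length $\phi(n')$ that only partly overlaps the implant is diluted by the ambient frequency-$\alpha^\ast(\beta)$ digits and ruins the lower control. The paper instead inserts, at positions of density zero in $\mathbb N$ (so that Lemma~\ref{lemma density} preserves the dimension), entire words $w_{k+1}^\beta(\alpha,N+M)$ of length $2^k(N+M)$ that are themselves hierarchically balanced at frequency $\alpha$; and the role of the slowly varying hypothesis is precisely to guarantee $\phi(n)\ll n^{1/2}$ via (2) of Lemma~\ref{lem increse}, so that every window of length $\phi(n)$ fits strictly inside one inserted word --- not, as you suggest, to keep $\phi$ nearly constant across a Moran generation. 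These defects are repairable, but as written the verification step does not close.
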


In particular, for the case that $\beta$ is an integer we can easily obtain

\begin{cor}
	Let $\beta\geq2$ be an integer and $\alpha\in [0,\beta-1]$. If $\phi$ satisfies the conditions in Theorem~\ref{theorem main theorem}, then we have
		\begin{align} 
		\dim_HER_\phi^\beta(\alpha)=\begin{cases} h^\beta(\alpha),\indent &0\leq\alpha\leq(\beta-1)/2;\\
			1,\indent &(\beta-1)/2<\alpha\leq \beta-1.\end{cases} 
	\end{align} 
    Here, $h^\beta(\alpha)$ can be reduced to the definition \eqref{definiton h beta alpha}. 
\end{cor}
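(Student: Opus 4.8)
The plan is to deduce the Corollary directly from Theorem~\ref{theorem main theorem} by evaluating the two threshold constants $\alpha^\ast(\beta)$ and $\Lambda(\beta)$ explicitly in the integer case, and then verifying that the $\beta$-adic entropy function $h^\beta(\alpha)$ collapses to the simpler combinatorial expression \eqref{definiton h beta alpha}. Since $\phi$ satisfies exactly the hypotheses of the main theorem, the formula \eqref{formula main} applies verbatim; only the values of the constants and the closed form of $h^\beta(\alpha)$ remain to be identified.

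First I would record the constants. As noted immediately after \eqref{Ax beta}, for integer $\beta$ Birkhoff's theorem applied with the Lebesgue measure gives $\alpha^\ast(\beta)=(\beta-1)/2$, while the discussion of $I_\beta$ in the introduction gives $I_\beta=[0,\beta-1]$, whence $\Lambda(\beta)=\beta-1$. Substituting these two values into the case distinction of \eqref{formula main} turns the breakpoint $\alpha^\ast(\beta)$ into $(\beta-1)/2$ and the right endpoint $\Lambda(\beta)$ into $\beta-1$, which is precisely the dichotomy asserted in the Corollary.

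The remaining point is the reduction of $h^\beta(\alpha)$. For integer $\beta$ the infinite expansion of $1$ is $(\beta-1,\beta-1,\ldots)$, so Parry's admissibility condition is met by every sequence over $\{0,1,\ldots,\beta-1\}$; consequently $\Sigma_\beta$ agrees with the full shift up to a countable set, and $\Sigma_\beta^n=\{0,1,\ldots,\beta-1\}^n$ for every $n$. In this situation the auxiliary systems $\Sigma_{\beta_m}$ are no longer needed: I would show that the outer limit in $m$ appearing in \eqref{definiton h alpha} is redundant, because $h^\beta(\alpha,n,\delta)$ may be counted directly on the full shift, so that the definition simplifies to the form \eqref{definiton h beta alpha}. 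Concretely, $h^\beta(\alpha,n,\delta)$ is the number of length-$n$ words with digit sum in $\big(n(\alpha-\delta),n(\alpha+\delta)\big)$, and a standard multinomial (Stirling) estimate identifies its exponential growth rate with the maximal Shannon entropy over all probability vectors on $\{0,\ldots,\beta-1\}$ with prescribed mean $\alpha$, normalized by $\log\beta$.

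I expect the only genuine work to lie in this last identification, namely verifying that the $m$-limit is vacuous here (equivalently, that counting on $\Sigma_{\beta_m}$ and passing to $m\to\infty$ yields the same growth rate as counting on the full shift) and that the resulting constrained-entropy maximum coincides with \eqref{definiton h beta alpha}. Everything else reduces to a direct substitution of the explicitly known constants into Theorem~\ref{theorem main theorem}.
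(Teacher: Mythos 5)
Your proposal is correct and follows essentially the route the paper intends: the Corollary is obtained from Theorem~\ref{theorem main theorem} by the direct substitutions $\alpha^\ast(\beta)=(\beta-1)/2$ and $\Lambda(\beta)=\beta-1$ for integer $\beta$, together with the observation that $h^\beta(\alpha)$ collapses to \eqref{definiton h beta alpha}. The only cosmetic difference is that the paper can justify this last reduction by noting that an integer $\beta$ lies in $B_1$ (via Lemma~\ref{lemma b1b0} and Lemma~\ref{lemma tan}), whereas you re-derive the redundancy of the $m$-limit from the full-shift structure of $\Sigma_\beta$; both are valid.
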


This paper is organized as follows. The next section is devoted to some notations and basic properties of $\beta$-expansion. In Section 3, we will introduce the lower and upper Besicovitch sets and determine their Hausdorff dimensions, which is prepared for the calculation of the upper bound of Hausdorff dimension of $ER_\phi^\beta(\alpha)$. In Section 4, the full Moran sets and the $\alpha$-Moran sets are introduced, which will be used to obtain the lower bound of Hausdorff dimension of $ER_\phi^\beta(\alpha)$. In Section 5, two essential lemmas are presented for ready use. The last section is devoted to the proof of Theorem~\ref{theorem main theorem} and we will give at first the definition and some basic properties of slowly varying sequence. 

The readers are assumed to be familiar with the definition and basic properties of Hausdorff dimension. The book \cite{F97} of Falconer is highly recommended. For the related topic about the dimensional theory associated with digit average of numbers, one can trace back the history and see the classic work in \cite{Be,Bi,FF,FFW} and the references therein. 

\section{$\beta$-expansion}
In this section, we introduce some notations, definitions and basic properties about $\beta$-expansion, together with some properties about $\beta$-adic entropy function.

First, we would like to give a result of Parry on charactering whether a digit sequence is admissible. For this purpose, we need to introduce the infinite $\beta$-expansion of 1.
Let $\beta>1$ be given. If the $\beta$-expansion of 1, according to~\eqref{formula beta 1}, terminates, in other words there exists an $n\geq1$ such that $\vep_n(1,\beta)\neq0$ but $\vep_m(1,\beta)=0$ for all $m\geq n+1$, then we call $\beta$ a simple Parry number and put
\[(\vep_1^*(1,\beta),\vep_2^*(1,\beta),\ldots)=(\vep_1(1,\beta),\vep_2(1,\beta),\ldots,\vep_n(1,\beta)-1)^\infty.\] 
Here, $(w)^\infty$ denotes the periodic sequence $(w,w,w,\ldots)$ when $w$ is a word. Otherwise, we write $\vep_i^*(1,\beta)=\vep_i(1,\beta)$, $i\geq1$, and use $(\vep_1^*(1,\beta),\vep_2^*(1,\beta),\ldots)$ to denote the $\beta$-expansion of 1. In both cases, the infinite $\beta$-expansion of 1 is set as
\[\vep^*(1,\beta)=(\vep_1^*(1,\beta),\vep_2^*(1,\beta),\ldots).\]
Let $\prec$ be the lexicographical order on $\Sigma^\infty$ which is defined as:
\[(\vep_1,\vep_2,\vep_3,\ldots)\prec(\eta_1,\eta_2,\eta_3,\ldots)\]
if and only if $\vep_1<\eta_1$ or there exists $n\geq1$ such that $\vep_i=\eta_i$ for $1\leq i<n$ but $\vep_n<\eta_n$. 

The admissible sequences is characterized in by the following theorem.

\begin{thm}[Parry \cite{Pa}]\label{theorem parry}
  \noindent	
  \begin{enumerate}
    \item A non-negative integer sequence $(\vep_1,\vep_2,\ldots)$ is $\beta$-admissible if and only if 
    \[(\vep_i,\vep_{i+1},\ldots)\prec(\vep_1^*(1,\beta),\vep_2^*(1,\beta),\ldots),\quad\forall i\geq1.\]
    \item If $1<\beta_1<\beta_2$, then $\Sigma_{\beta_1}\subset\Sigma_{\beta_2}$.
  	\item A non-negative integer sequence $(\vep_1,\vep_2,\ldots)$ is the     expansion of 1 for some $\beta>1$ if and only if 
  	\[(\vep_i,\vep_{i+1},\ldots)\prec(\vep_1,\vep_2,\ldots)\ \text{or}\ (\vep_i,\vep_{i+1},\ldots)=(\vep_1,\vep_2,\ldots),\quad\forall i\geq1.\]
  \end{enumerate}	
\end{thm}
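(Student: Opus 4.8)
The plan is to derive all three parts from two structural facts about the greedy $\beta$-expansion: that the coding map $x\mapsto\vep(x,\beta)$ is strictly monotone with respect to the lexicographical order $\prec$, and that the infinite expansion $\vep^*(1,\beta)$ is the \emph{quasi-greedy} expansion of $1$, i.e. the $\prec$-largest infinite digit sequence whose value equals $1$. First I would prove the monotonicity lemma: for $x,y\in I$ with infinite expansions one has $x<y$ if and only if $\vep(x,\beta)\prec\vep(y,\beta)$. This is immediate from the greedy algorithm, since at the first digit where two expansions disagree the larger number necessarily receives the larger digit; in particular, on greedy expansions lexicographical order and numerical order coincide.

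Next I would establish the properties of $\vep^*(1,\beta)$, treating the simple Parry case (finite expansion of $1$) and the non-simple case separately; in the former the periodic modification $(\vep_1,\ldots,\vep_n-1)^\infty$ is exactly the device that produces an infinite sequence of the same value $1$ that is $\prec$-maximal. The central observation here is that $\vep^*(1,\beta)$ is \emph{self-admissible}: every shift satisfies $\sigma^i(\vep^*(1,\beta))\preceq\vep^*(1,\beta)$. This is essentially statement (3) applied to the distinguished sequence, and I would prove it directly from the maximality of the quasi-greedy expansion.

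With these in hand, part (1) splits cleanly. For necessity, if $(\vep_i)$ is the expansion of some $x\in I$, then for each $i$ the shifted sequence $(\vep_i,\vep_{i+1},\ldots)$ is the expansion of $T_\beta^{i-1}(x)\in I$; since $T_\beta^{i-1}(x)<1$, monotonicity together with the quasi-greedy description of $\vep^*(1,\beta)$ yields $(\vep_i,\vep_{i+1},\ldots)\prec\vep^*(1,\beta)$. For sufficiency, given a sequence all of whose shifts are $\prec\vep^*(1,\beta)$, I would set $x=\sum_i\vep_i\beta^{-i}$ and run the greedy algorithm, showing by induction that at each stage the value of the current tail lies in $[0,1)$, so that $[\beta T_\beta^{k}(x)]=\vep_{k+1}$ reproduces the original digits. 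Part (2) then follows by showing the map $\beta\mapsto\vep^*(1,\beta)$ is non-decreasing for $\prec$: if $\beta_1<\beta_2$ then $\vep^*(1,\beta_1)\preceq\vep^*(1,\beta_2)$, whence any sequence admissible for $\beta_1$ (all shifts $\prec\vep^*(1,\beta_1)$) is admissible for $\beta_2$. Part (3) is the self-referential version of (1): the expansions of $1$ are precisely the self-admissible sequences, and the proof mirrors (1) with $\vep^*(1,\beta)$ replaced by the sequence itself.

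The hard part will be the sufficiency direction of (1), specifically the tail estimate that all $\sigma^k$-shifts being $\prec\vep^*(1,\beta)$ forces each tail value to stay strictly below $1$. A crude bound by the maximal digit fails, because the geometric tail $\sum_j\lceil\beta-1\rceil\,\beta^{-j}$ can exceed $1$; the argument must instead exploit the self-admissibility of $\vep^*(1,\beta)$ and compare tail-by-tail against it, so that the strict inequality at the first disagreeing digit propagates correctly. Establishing the $\prec$-monotonicity of $\beta\mapsto\vep^*(1,\beta)$ and carefully handling the simple Parry case (where $1$ has two competing representations) are the remaining technical points.
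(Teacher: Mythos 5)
This statement is quoted in the paper as a classical result of Parry with the citation \cite{Pa}; the paper supplies no proof of it, so there is no internal argument to compare yours against. Judged on its own, your outline is the standard modern route to Parry's theorem (strict $\prec$-monotonicity of the greedy coding, the quasi-greedy expansion $\vep^*(1,\beta)$ as the $\prec$-maximal infinite representation of $1$, and its self-admissibility), and you have correctly isolated the genuinely delicate step: in the sufficiency direction of (1) one cannot bound the tail $\sum_j\vep_{k+j}\beta^{-j}$ by the maximal digit, but must compare it digit-by-digit against $\vep^*(1,\beta)$, recursing on the tail at the first index of disagreement and using $\sum_i\vep_i^*(1,\beta)\beta^{-i}=1$ to propagate the strict inequality. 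Two points deserve care in a full write-up. First, the necessity direction of (1) needs the fact that the greedy expansion of any $y<1$ is strictly below the \emph{quasi-greedy} expansion of $1$ (not merely below the greedy expansion of $1$), which is exactly the maximality property you attribute to $\vep^*(1,\beta)$ and should be proved, not just asserted, in the simple Parry case. Second, statement (3) as written uses the non-strict condition ``$\prec$ or $=$'', so it characterizes the \emph{infinite} expansions $\vep^*(1,\beta)$ rather than the greedy ones: for instance $(1,1,1,\ldots)$ satisfies the condition and arises from $\beta=2$, whose greedy expansion of $1$ is $(2,0,0,\ldots)$; your proof of (3) must target this notion of ``expansion of $1$'' for the equivalence to hold. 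With those caveats the plan is sound and complete in outline.
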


Moreover, the cardinality of $\Sigma_\beta^n$ and the topological entropy of $\beta$-expansion can be charactered by the following theorem.

\begin{thm}[R\'{e}nyi \cite{R}]\label{theorem renyi}
	For any $\beta>1$ and $n\geq1$, we have
	\begin{align}\label{inequality renyi}
		\beta^n\leq\card\Sigma_\beta^n\leq\frac{\beta^{n+1}}{\beta-1}.
	\end{align}
	In particular, the topological entropy of the dynamical system $(I, T_\beta)$ is equal to $\lim_{n\to\infty}(\log\card\Sigma_\beta^n)/n=\log\beta$.
\end{thm}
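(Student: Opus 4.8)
The plan is to prove the two-sided estimate \eqref{inequality renyi} by a counting argument on rank-$n$ cylinders and then read off the entropy by a squeeze. For an admissible word $w=(w_1,\ldots,w_n)\in\Sigma_\beta^n$ write $I_n(w)=\{x\in I\colon\vep_i(x,\beta)=w_i,\ 1\le i\le n\}$ for the associated cylinder. These cylinders partition $I$, and on each of them $T_\beta^n$ acts as an increasing affine map $x\mapsto\beta^n x-c$ with $c$ an integer depending only on $w$; hence $T_\beta^n$ is injective with constant slope $\beta^n$ on $I_n(w)$ and, being order preserving, carries $I_n(w)$ onto an interval of the form $[0,r_w)$ with $r_w\in(0,1]$. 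In particular $r_w=\beta^n|I_n(w)|\le1$, so every rank-$n$ cylinder has length at most $\beta^{-n}$. The lower bound is then immediate: since the cylinders partition $I$,
\[
1=\sum_{w\in\Sigma_\beta^n}|I_n(w)|\le\card\Sigma_\beta^n\cdot\beta^{-n},
\]
which gives $\card\Sigma_\beta^n\ge\beta^n$.

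For the upper bound I would set up a recursion for $N_n:=\card\Sigma_\beta^n$ by counting one-digit extensions. The admissible extensions $(w_1,\ldots,w_n,d)$ of $w$ correspond exactly to the cells of the rank-one partition of $I$ that meet the image interval $[0,r_w)$, and the number of such cells is at most $\beta r_w+1$. Summing over $w$ and using the telescoping identity
\[
\sum_{w\in\Sigma_\beta^n}r_w=\beta^n\sum_{w\in\Sigma_\beta^n}|I_n(w)|=\beta^n,
\]
I obtain the recursion $N_{n+1}\le\beta\sum_{w}r_w+N_n=\beta^{n+1}+N_n$. Together with $N_1=\card\Sigma_\beta^1\le\beta+1$, a straightforward induction summing the geometric series yields $N_n\le\sum_{k=0}^{n}\beta^k=(\beta^{n+1}-1)/(\beta-1)\le\beta^{n+1}/(\beta-1)$, which is the claimed upper bound.

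Finally, taking logarithms in $\beta^n\le\card\Sigma_\beta^n\le\beta^{n+1}/(\beta-1)$, dividing by $n$ and letting $n\to\infty$ squeezes $(\log\card\Sigma_\beta^n)/n$ to $\log\beta$; that this common limit is the topological entropy of $(I,T_\beta)$ is the standard computation of entropy for the $\beta$-shift via its generating cylinder partition, the system being expansive. The delicate point, and the step I expect to require the most care, is the geometry underlying the child-counting: one must verify that $T_\beta^n$ maps each rank-$n$ cylinder onto an interval anchored at $0$ with slope exactly $\beta^n$, so that $r_w=\beta^n|I_n(w)|$ and hence $\sum_w r_w=\beta^n$, and that the number of one-digit extensions is uniformly controlled by $\beta r_w+1$ even when the image interval ends inside the partial top cell of the rank-one partition. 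These edge cases only decrease the counts and so do not affect the final bounds.
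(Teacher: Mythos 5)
Your argument is correct, but note that the paper does not prove this statement at all: it is quoted as Theorem~\ref{theorem renyi} with a citation to R\'{e}nyi's 1957 paper, so there is no internal proof to match against. Your route differs somewhat from the classical one. For the lower bound you use exactly the standard observation that the rank-$n$ cylinders partition $I$ and each has length at most $\beta^{-n}$ (because $T_\beta^n$ is affine with slope $\beta^n$ on $I_n(w)$ and maps it onto an interval $[0,r_w)\subset I$ anchored at $0$ --- this anchoring is legitimate, since the left endpoint $a_w=\sum_{i\leq n}w_i\beta^{-i}$ of an admissible cylinder belongs to the cylinder and satisfies $T_\beta^na_w=0$). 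For the upper bound the classical argument (R\'{e}nyi, Parry) decomposes each admissible word according to how long it agrees with the expansion of $1$, giving $\card\Sigma_\beta^n\leq\sum_{k=0}^{n}\beta^{k}$ directly; your alternative is a one-step recursion $N_{n+1}\leq\beta\sum_{w}r_w+N_n$ combined with the telescoping identity $\sum_{w}r_w=\beta^n\sum_{w}|I_n(w)|=\beta^n$, which arrives at the same geometric series. Your version is arguably more self-contained, since it avoids invoking Parry's characterization of admissibility, at the cost of needing the precise geometry of $T_\beta^n$ on cylinders (which you correctly flag as the delicate point, and which is recorded in the paper's Lemma~\ref{lemma 31} and Lemma~\ref{lemma 32}). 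The final passage from the growth rate of $\card\Sigma_\beta^n$ to the topological entropy of $(I,T_\beta)$ is stated rather than proved, but that is consistent with how the paper itself treats this point.
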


In what follows, we will introduce a crucial subset $B_0$ of $(1,\infty)$ (see~\cite{LW}). In fact, many problems about $\beta$-expansion can be more easily dealt with when $\beta$ takes value in this set and by using the technique of approximation we can then solve the problems for the general $\beta>1$.

Let $l_n(\beta)$, $n\geq1$, be the length of the longest consecutive zeros following the digit $\vep_n^*(1,\beta)$. That is,
\[l_n(\beta)=\max\{k\geq0\colon\vep_{n+j}^*(1,\beta)=0,\ \text{for all}\ 1\leq j\leq k\},\quad n\geq1.\]
Write
\begin{align}\label{definition b0}
	B_0=\big\{\beta>1\colon \{l_n(\beta)\}_{n\geq1}\ \text{is bounded}\big\}.
\end{align}
Then the set $B_0$ is just the collection $C_3$ in \cite{S} such that $S_\beta$ satisfies the specification property. Moreover, we have

\begin{lem}[See \cite{LTWW,S}]\label{lemma A zero}
	The set $B_0$ is uncountable and dense in $(1,\infty)$. In addition, we have that $\mathcal{L}(B_0)=0$ and $\dim_HB_0=1$. 
\end{lem}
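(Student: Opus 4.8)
The plan is to treat the four assertions separately, disposing of density and uncountability first as elementary consequences of Parry's characterization (Theorem~\ref{theorem parry}(3)) and then attacking the two metric statements. For \emph{density}, note that every $\beta$ whose infinite expansion $\vep^*(1,\beta)$ is eventually periodic automatically lies in $B_0$, since a periodic tail produces runs of zeros of bounded length. The numbers $\beta_m$ from \eqref{definition beta m} are of exactly this kind: each is a simple Parry number with expansion of $1$ equal to $(\vep_1^*(1,\beta),\ldots,\vep_m^*(1,\beta)-1)^\infty$, so $\{\beta_m\}\subset B_0$, and since $\beta_m\uparrow\beta$ for every $\beta>1$ this already gives density. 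For \emph{uncountability}, fix an integer $c\ge2$ and consider sequences $(c,a_2,a_3,\ldots)$ with $a_j\in\{0,1\}$ and no two consecutive $a_j$ equal to $0$. Any such sequence is self-admissible, because every proper tail begins with some $a_j\le1<c$ and is therefore lexicographically strictly smaller than the whole sequence, so Theorem~\ref{theorem parry}(3) applies. All zero-runs are bounded by $1$, there are uncountably many such sequences, and $\beta\mapsto\vep^*(1,\beta)$ is injective; hence $B_0$ is uncountable.

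For $\dim_H B_0=1$ I would exhibit Cantor subsets of $B_0$ whose dimensions approach $1$. Fix integers $c\ge2$ and $K\ge1$, and let $W_n^{(K)}$ be the set of words $(c,a_2,\ldots,a_n)$ with $a_j\in\{0,1,\ldots,c-1\}$ containing no block $0^{K+1}$. As above, each such word is a self-admissible prefix of $\vep^*(1,\beta)$ for $\beta$ ranging over a fundamental interval $J_w\subset(c,c+1)$ of length comparable to $c^{-n}$, and the associated limit set $E_{c,K}$ lies in $B_0$. A transfer-matrix count gives $\card W_n^{(K)}\asymp\lambda_{c,K}^{\,n}$, where $\lambda_{c,K}$ is the growth rate of words over an alphabet of size $c$ avoiding $0^{K+1}$; one has $\lambda_{c,K}<c$ but $\lambda_{c,K}\to c$ as $K\to\infty$. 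Spreading unit mass uniformly over the intervals $J_w$ and applying the mass distribution principle (see \cite{F97}) yields
\[
\dim_H E_{c,K}\ \ge\ \frac{\log\lambda_{c,K}}{\log c}.
\]
Letting $K\to\infty$ forces the right-hand side to $1$; since $B_0\subset\mathbb{R}$ gives $\dim_H B_0\le1$ for free, we conclude $\dim_H B_0=1$.

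For $\mathcal{L}(B_0)=0$ I would write $B_0=\bigcup_{K\ge1}B_0^{(K)}$ with $B_0^{(K)}=\{\beta>1\colon\vep^*(1,\beta)\text{ contains no block }0^{K+1}\}$, so by countable additivity it suffices to show each $B_0^{(K)}$ is null. Working inside a fixed range $(c,c+1)$ and using the fundamental intervals $J_w$ together with the comparability $\mathcal{L}(J_w)\asymp\beta^{-|w|}$, the heart of the matter is the uniform estimate that a zero block is never too costly to insert:
\[
\frac{\mathcal{L}\big(\{\beta\in J_w\colon \vep_{|w|+1}^*(1,\beta)=\cdots=\vep_{|w|+K+1}^*(1,\beta)=0\}\big)}{\mathcal{L}(J_w)}\ \ge\ p_K>0,
\]
uniformly over admissible prefixes $w$. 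Granting this, a block Borel--Cantelli argument over the disjoint blocks of length $K+1$ bounds the measure of parameters avoiding $0^{K+1}$ throughout the first $m$ blocks by $(1-p_K)^m\to0$, whence $\mathcal{L}(B_0^{(K)})=0$.

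The step I expect to be the main obstacle is precisely this uniform lower bound $p_K$. Near the top of a fundamental interval the Parry admissibility constraint is tight, so one must check that the sub-interval of $J_w$ on which the next $K+1$ digits vanish is both nonempty (appending zeros to an admissible prefix should remain realizable as the expansion of $1$ of a smaller parameter) and of length comparable to $\beta^{-(|w|+K+1)}$, uniformly in $w$. Controlling this cylinder geometry is exactly the content of Schmeling's analysis in \cite{S} (and \cite{LTWW}); once it is in hand, the null-set conclusion follows from the covering estimate above, while the contrasting full-dimension conclusion follows from the counting estimate of the second paragraph.
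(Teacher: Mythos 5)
The paper offers no argument for this lemma at all --- it is imported verbatim from \cite{LTWW,S} --- so there is no internal proof to compare against; I can only assess your sketch on its own terms. The first half is fine: density via the periodic approximants $\beta_m$ is exactly the content of the paper's Lemma~\ref{lemma b1b0}, and uncountability via self-admissible sequences $(c,a_2,a_3,\ldots)$ with zero-runs of length at most one is a correct, elementary use of Theorem~\ref{theorem parry}(3).

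The two metric statements, however, rest on an assumption that is false as stated: the uniform two-sided comparability $\mathcal{L}(J_w)\asymp c^{-n}$ for the parameter cylinders $J_w=\{\beta\colon \vep^*(1,\beta)\ \text{begins with}\ w\}$. Unlike cylinders in $[0,1)$ for a fixed base, parameter cylinders admit no uniform lower bound on their length: for a general self-admissible prefix $w$ of length $n$ the cylinder can be far shorter than $\beta^{-n}$, and obtaining lower bounds only for prefixes with special terminal structure (appended zero blocks, recurrence-time conditions) is precisely the technical heart of \cite{LTWW}. Both of your key steps founder on this point: the mass-distribution bound $\dim_H E_{c,K}\geq\log\lambda_{c,K}/\log c$ needs a length lower bound for \emph{every} word counted by the transfer matrix, and the uniform constant $p_K$ in the Borel--Cantelli step is exactly the estimate you cannot take for granted. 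You flag this as ``the main obstacle'' and defer it to \cite{S,LTWW}, but since that estimate carries essentially all the analytic content of the lemma, what you have is an outline of the known strategy rather than a proof. Two smaller points: since $J_w\subset(c,c+1)$ the lengths are of order $\beta^{-n}$ with $\beta$ as large as $c+1$, so the dimension bound you actually obtain is $\log\lambda_{c,K}/\log(c+1)$ and you must send $c\to\infty$ as well as $K\to\infty$; and in the null-set argument the relative-measure bound must hold conditionally on each preceding cylinder for the geometric decay $(1-p_K)^m$ to be legitimate, which is again the same cylinder-geometry issue.
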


Let $(\vep_1,\ldots,\vep_n)$, $n\geq1$, be an admissible word. We call 
\[I_n(\vep_1,\ldots,\vep_n)=\{x\in I\colon \vep_i(x,\beta)=\vep_i, 1\leq i\leq n\}\]
an $n$-th \emph{cylinder}. Since each cylinder is an interval, we can also call it an $n$-th order \emph{basic interval}. Denote by $I_n(x)=I_n(\vep_1(x,\beta),\ldots,\vep_n(x,\beta))$ the $n$-th cylinder containing $x$ and $|I_n(x)|$ the length of $I_n(x)$. It is easy to see from the expansion \eqref{formula beta 1} that 
\[|I_n(x)|\leq \beta^{-n},\quad n\geq1.\] 
Obviously, we have $I_n(x)|=\beta^{-n}$ when $\beta$ is an integer. In addition to this, the full cylinder of rank $n$ in base $\beta$, defined in the following, also meets this situation.  
\begin{dfn}
	Let $(\vep_1,\ldots,\vep_n)\in\Sigma_\beta^n$, $n\geq1$. An $n$-th cylinder $I_n(\vep_1,\ldots,\vep_n)$ is said to be \emph{full} if its length verifies
	\[|I_n(\vep_1,\ldots,\vep_n)|=\beta^{-n}.\]
	Accordingly, $(\vep_1,\ldots,\vep_n)$ is called a \emph{full word}.
\end{dfn}

Let $\vep=(\vep_1,\ldots,\vep_n)\in\Sigma_\beta^n$, $n\geq1$; $\eta=(\eta_1,\ldots,\eta_m)\in\Sigma_\beta^m$, $m\geq1$. We defined the concatenation of $\vep$ and $\eta$ as
\[\vep\ast\eta=(\vep_1,\ldots,\vep_n,\eta_1,\ldots,\eta_m)\]
if the concatenated word is admissible. The following lemmas characterize the full cylinders.

\begin{lem}[See Lemma 3.1 in \cite{FW}]\label{lemma 31}
Let $\beta>1$ and $\vep=(\vep_1,\ldots,\vep_n)\in\Sigma_\beta^n$, $n\geq1$. The following are equivalent:
\begin{enumerate}
	\item $I_n(\vep_1,\ldots,\vep_n)$ is a full cylinder;
	\item $T_\beta^nI_n(\vep_1,\ldots,\vep_n)=I$;
	\item For any $\eta=\in\Sigma_\beta^m$ with $m\geq 1$, the concatenated word $\vep\ast\eta$ is admissible.
\end{enumerate}	
\end{lem}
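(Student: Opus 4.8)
The plan is to exploit the fact that, on the cylinder $I_n=I_n(\vep_1,\ldots,\vep_n)$, the iterate $T_\beta^n$ acts as a single affine map. First I would record that for $x\in I_n$ one has $T_\beta^n x=\beta^n x-c$, where $c=\vep_1\beta^{n-1}+\cdots+\vep_{n-1}\beta+\vep_n$ is the integer determined by the prefix; this follows by unwinding the definition $T_\beta(x)=\beta x-[\beta x]$ one coordinate at a time. Consequently $T_\beta^n$ maps the interval $I_n$ increasingly and affinely onto an interval $J=T_\beta^n I_n$ with $|J|=\beta^n|I_n|$, and since $T_\beta(I)\subseteq I$ we always have $J\subseteq I=[0,1)$, whence $|I_n|\le\beta^{-n}$. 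This identity gives the equivalence (1)$\Leftrightarrow$(2) at once: $I_n$ is full, i.e. $|I_n|=\beta^{-n}$, exactly when $|J|=1$, and an interval $J\subseteq[0,1)$ of length $1$ can only be $[0,1)=I$ itself; conversely $J=I$ forces $|I_n|=\beta^{-n}$.

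Next I would prove (2)$\Rightarrow$(3). Let $\eta=(\eta_1,\ldots,\eta_m)\in\Sigma_\beta^m$. Admissibility of $\eta$ means the cylinder $I_m(\eta)$ is nonempty, so I pick $y\in I_m(\eta)$, i.e. a point whose $\beta$-expansion begins with $\eta$. By (2) there is $x\in I_n$ with $T_\beta^n x=y$. Unwinding the expansions, $\vep(x,\beta)=(\vep_1,\ldots,\vep_n,\vep_1(y,\beta),\vep_2(y,\beta),\ldots)$, whose first $n+m$ digits are precisely $\vep\ast\eta$; thus $\vep\ast\eta$ is a prefix of an admissible sequence and hence admissible.

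Finally, for (3)$\Rightarrow$(1), the idea is to show that the image interval $J=T_\beta^n I_n$ is \emph{dense} in $I$, which combined with the first paragraph forces $|J|=1$ and hence fullness. Fix $m\ge1$. For every $\eta\in\Sigma_\beta^m$, hypothesis (3) says $\vep\ast\eta$ is admissible, so $I_{n+m}(\vep\ast\eta)$ is a nonempty subcylinder of $I_n$; for $x$ in it one has $T_\beta^n x\in I_m(\eta)$, so the nonempty set $T_\beta^n I_{n+m}(\vep\ast\eta)\subseteq J$ meets $I_m(\eta)$. As $\eta$ ranges over $\Sigma_\beta^m$ the cylinders $I_m(\eta)$ partition $I$ into pieces of diameter at most $\beta^{-m}$, so $J$ is $\beta^{-m}$-dense in $I$; letting $m\to\infty$ shows $J$ is dense. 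Since $J$ is an interval (the affine image of $I_n$) contained in $[0,1)$, density forces $|J|=1$, i.e. $|I_n|=\beta^{-n}$, which is (1).

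The main obstacle I anticipate is the bookkeeping at the endpoints: the cylinders are half-open intervals, $T_\beta$ is only piecewise continuous, and the image $J$ could a priori be half-open, so I must argue carefully that both ``an interval of length $1$ inside $[0,1)$'' and ``a dense interval inside $[0,1)$'' force equality with $I$ rather than merely with its closure. Here the constant-slope description of $T_\beta^n$ on $I_n$, together with the partition of $I$ into admissible $m$-cylinders (whose admissibility rests on Parry's characterisation, Theorem~\ref{theorem parry}), are what keep the argument from losing a boundary point. The remaining steps are routine once this affine picture is in place.
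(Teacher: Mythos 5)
The paper does not prove this lemma at all --- it is imported verbatim as Lemma 3.1 of \cite{FW} --- so there is no internal proof to compare against; your argument must be judged on its own. It is correct and is essentially the standard proof: the identity $T_\beta^nx=\beta^nx-c$ on $I_n$ makes $T_\beta^n|_{I_n}$ an increasing affine bijection onto a left-closed interval $J\subseteq I$ of length $\beta^n|I_n|$, which gives (1)$\Leftrightarrow$(2) (left-closedness is what rules out $J=(0,1)$, resolving the endpoint worry you raise); (2)$\Rightarrow$(3) by lifting a point of $I_m(\eta)$ through $T_\beta^n$; and (3)$\Rightarrow$(1) because $J$ meets every $m$-cylinder, hence is dense, hence has length $1$. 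The implication cycle (3)$\Rightarrow$(1)$\Rightarrow$(2)$\Rightarrow$(3) closes, so the proof is complete; the only point worth making explicit in a final write-up is the standard fact that cylinders are intervals of the form $[a,b)$ with left endpoint $\sum_{i=1}^n\vep_i\beta^{-i}$, which is what your affine picture silently uses.
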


\begin{lem}[See Lemma 3.2 and Corollary 3.3 in \cite{FW}]\label{lemma 32}
	Let $\beta>1$ and $m,n\geq1$.
	\begin{enumerate}
		\item If $I_n(\vep_1,\ldots,\vep_n)$ is full, then for any word $(\eta_1,\ldots,\eta_m)\in\Sigma_\beta^m$, we have
		\[|I_{n+m}(\vep_1,\ldots,\vep_n,\eta_1,\ldots,\eta_m)|=|I_n(\vep_1,\ldots,\vep_n)|\cdot|I_m(\eta_1,\ldots,\eta_m)|;\]
		\item Let $p\in\mathbb{N}$. Then
		\[I_{n+p}(\vep_1,\ldots,\vep_n,0^p)\ \text{is full}\Leftrightarrow I_n(\vep_1,\ldots,\vep_n)\geq\beta^{-(n+p)}.\]	
		Here, $0^p$ with $p\geq1$ is a word of length $p$ composed by $0$'s and $0^0$ is the empty word.
	\end{enumerate}	
\end{lem}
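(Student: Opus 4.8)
The plan is to derive both parts from one structural fact: on an $n$-th cylinder the iterate $T_\beta^n$ is an increasing affine map of slope $\beta^n$. First I would record, by induction on $n$, that if $(\vep_1,\ldots,\vep_n)\in\Sigma_\beta^n$ and $\omega=\sum_{i=1}^n\vep_i\beta^{-i}$ is the left endpoint of $I_n(\vep_1,\ldots,\vep_n)$, then $T_\beta^n x=\beta^n(x-\omega)$ for every $x\in I_n(\vep_1,\ldots,\vep_n)$: on the cylinder the first digit is frozen, so $T_\beta x=\beta x-\vep_1$, and iterating telescopes to the formula. Hence $T_\beta^n$ carries the interval $I_n(\vep_1,\ldots,\vep_n)$ bijectively and affinely onto $[0,\beta^n|I_n(\vep_1,\ldots,\vep_n)|)$, and by Lemma \ref{lemma 31} fullness is precisely the statement that this image equals $I$, equivalently $|I_n(\vep_1,\ldots,\vep_n)|=\beta^{-n}$.

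For (1) I would express the longer cylinder as a preimage, $I_{n+m}(\vep_1,\ldots,\vep_n,\eta_1,\ldots,\eta_m)=I_n(\vep_1,\ldots,\vep_n)\cap T_\beta^{-n}I_m(\eta_1,\ldots,\eta_m)$, the concatenation being admissible by Lemma \ref{lemma 31}(3). If $I_n(\vep_1,\ldots,\vep_n)$ is full then $T_\beta^n$ restricts to an affine bijection of it onto $I$ of constant slope $\beta^n$, so the preimage of $I_m(\eta_1,\ldots,\eta_m)\subseteq I$ has length $\beta^{-n}|I_m(\eta_1,\ldots,\eta_m)|$; since $|I_n(\vep_1,\ldots,\vep_n)|=\beta^{-n}$ here, this equals $|I_n(\vep_1,\ldots,\vep_n)|\cdot|I_m(\eta_1,\ldots,\eta_m)|$.

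For (2) I would use the explicit description $I_p(0^p)=[0,\beta^{-p})$ (so that $I_p(0^p)$ is itself full), giving $I_{n+p}(\vep_1,\ldots,\vep_n,0^p)=\{x\in I_n(\vep_1,\ldots,\vep_n)\colon T_\beta^n x<\beta^{-p}\}$. Writing $I_n(\vep_1,\ldots,\vep_n)=[\omega,\omega+|I_n(\vep_1,\ldots,\vep_n)|)$ and substituting $T_\beta^n x=\beta^n(x-\omega)$ turns this into the interval $[\omega,\omega+\min\{|I_n(\vep_1,\ldots,\vep_n)|,\beta^{-(n+p)}\})$. Its length is therefore $\beta^{-(n+p)}$ — i.e.\ the cylinder is full — exactly when $|I_n(\vep_1,\ldots,\vep_n)|\geq\beta^{-(n+p)}$, and otherwise it equals $|I_n(\vep_1,\ldots,\vep_n)|<\beta^{-(n+p)}$ so the cylinder is not full; this is the claimed equivalence.

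I do not anticipate a real obstacle: the whole content is the affine model for $T_\beta^n$ on a cylinder combined with the three equivalent descriptions of fullness in Lemma \ref{lemma 31}. The two points I would treat carefully are the \emph{uniform} validity of $T_\beta^n x=\beta^n(x-\omega)$ over the entire cylinder — this is what makes lengths scale by the exact factor $\beta^{-n}$ rather than merely obey the inequality $|I_n(x)|\leq\beta^{-n}$ — and, in part (2), the dichotomy according to whether $T_\beta^n$ already maps $I_n(\vep_1,\ldots,\vep_n)$ inside $[0,\beta^{-p})$ or the condition $T_\beta^n x<\beta^{-p}$ genuinely truncates it. That dichotomy is exactly what produces the threshold $\beta^{-(n+p)}$ in the statement.
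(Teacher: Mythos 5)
Your proof is correct; the paper gives no argument of its own for this lemma, deferring entirely to Lemma 3.2 and Corollary 3.3 of \cite{FW}, and your derivation via the affine model $T_\beta^n x=\beta^n(x-\omega)$ on a cylinder, combined with the characterization $T_\beta^nI_n(\vep_1,\ldots,\vep_n)=I$ of fullness from Lemma \ref{lemma 31}, is essentially the standard argument used there. Both parts check out, including the identification $I_{n+m}(\vep_1,\ldots,\vep_n,\eta_1,\ldots,\eta_m)=I_n(\vep_1,\ldots,\vep_n)\cap T_\beta^{-n}I_m(\eta_1,\ldots,\eta_m)$ and the truncation dichotomy that produces the threshold $\beta^{-(n+p)}$ in part (2).
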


By Lemma~\ref{lemma 32} (1) and the definition of full word, we have
\begin{cor}\label{corollary full word}
	Let $\beta>1$ and $m,n\geq1$. If the two cylinders $I_n(\vep_1,\ldots,\vep_n)$ and $I_m(\eta_1,\ldots,\eta_m)$ are full, then the cylinder $I_{n+m}(\vep_1,\ldots,\vep_n,\eta_1,\ldots,\eta_m)$ is full; equivalently, if the two words $(\vep_1,\ldots,\vep_n)$ and $(\eta_1,\ldots,\eta_m)$ are full, then the concatenation $(\vep_1,\ldots,\vep_n,\eta_1,\ldots,\eta_m)$ is full.
\end{cor}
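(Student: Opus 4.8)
The plan is to reduce the statement immediately to the multiplicativity of lengths furnished by Lemma~\ref{lemma 32}(1). First I would record the two length identities coming from the hypothesis: since the cylinders $I_n(\vep_1,\ldots,\vep_n)$ and $I_m(\eta_1,\ldots,\eta_m)$ are full, the very definition of a full cylinder gives
\[
|I_n(\vep_1,\ldots,\vep_n)|=\beta^{-n}
\quad\text{and}\quad
|I_m(\eta_1,\ldots,\eta_m)|=\beta^{-m}.
\]

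Before invoking the product formula, I would make sure the concatenated word is genuinely admissible, so that the cylinder $I_{n+m}(\vep_1,\ldots,\vep_n,\eta_1,\ldots,\eta_m)$ is well defined. This is exactly the content of Lemma~\ref{lemma 31}: the equivalence $(1)\Leftrightarrow(3)$ says that because $I_n(\vep_1,\ldots,\vep_n)$ is full, the concatenation $(\vep_1,\ldots,\vep_n)\ast(\eta_1,\ldots,\eta_m)$ is admissible for every $(\eta_1,\ldots,\eta_m)\in\Sigma_\beta^m$, and in particular for our chosen $\eta$. This is the only point that needs a word of justification; everything else is arithmetic.

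With admissibility secured, I would apply Lemma~\ref{lemma 32}(1) with the first word $(\vep_1,\ldots,\vep_n)$ full to obtain
\[
|I_{n+m}(\vep_1,\ldots,\vep_n,\eta_1,\ldots,\eta_m)|
=|I_n(\vep_1,\ldots,\vep_n)|\cdot|I_m(\eta_1,\ldots,\eta_m)|
=\beta^{-n}\cdot\beta^{-m}
=\beta^{-(n+m)}.
\]
Since the resulting $(n+m)$-th cylinder has length exactly $\beta^{-(n+m)}$, the definition of full cylinder forces $I_{n+m}(\vep_1,\ldots,\vep_n,\eta_1,\ldots,\eta_m)$ to be full, which is the assertion; the equivalent reformulation in terms of full words is then just a restatement via the definition of a full word. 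There is no real obstacle here: the corollary is a direct bookkeeping consequence of the multiplicativity in Lemma~\ref{lemma 32}(1), with the lone subtlety being the admissibility of the concatenation, already handled by the fullness criterion in Lemma~\ref{lemma 31}.
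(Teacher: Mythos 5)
Your proof is correct and follows essentially the same route as the paper, which derives the corollary directly from Lemma~\ref{lemma 32}(1) and the definition of a full word; your additional remark that Lemma~\ref{lemma 31} guarantees admissibility of the concatenation is a sensible (if implicit in the paper) piece of bookkeeping.
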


\begin{lem}[See \cite{LW}]\label{lemma LW} 
	Let $\beta>1$ and $n\geq1$. Write $M_n(\beta)=\max_{1\leq i\leq n}\{l_i(\beta)\}$. Then, for any admissible word $(\vep_1,\ldots,\vep_n)$, the cylinder $I_{n+m}(\vep_1,\ldots,\vep_n,0^{m+1})$ is full if $m\geq M_n(\beta)$.
\end{lem}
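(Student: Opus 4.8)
The plan is to reduce the fullness claim to the purely combinatorial criterion furnished by Lemma~\ref{lemma 31}(3): the word $(\vep_1,\ldots,\vep_n,0^{m+1})$ is full exactly when, for \emph{every} admissible word $\eta=(\eta_1,\ldots,\eta_k)$, the concatenation $(\vep_1,\ldots,\vep_n,0^{m+1},\eta_1,\ldots,\eta_k)$ is again admissible. So I would fix an arbitrary admissible $\eta$ (together with an admissible continuation, so that I may work with an honest infinite sequence) and verify admissibility directly through Parry's criterion, Theorem~\ref{theorem parry}(1); that is, I would check that every suffix of this concatenation is $\prec\vep^*(1,\beta)$. (I note in passing that the word $(\vep_1,\ldots,\vep_n,0^{m+1})$ has length $n+m+1$, so the cylinder is really $I_{n+m+1}(\cdots)$; this harmless indexing point does not affect the argument.)

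First I would dispose of the inexpensive suffixes. Any suffix beginning inside the block $0^{m+1}$ starts with the digit $0$, while $\vep_1^*(1,\beta)\ge1$ since $\beta>1$, so such a suffix is $\prec\vep^*(1,\beta)$ at the very first coordinate; and suffixes beginning inside the $\eta$-part are controlled by the admissibility of $\eta$ and of its chosen continuation. The whole matter therefore reduces to the suffixes starting at a position $i\in\{1,\ldots,n\}$, namely $(\vep_i,\ldots,\vep_n,0^{m+1},\eta_1,\ldots)$. For such a suffix I would compare the block $(\vep_i,\ldots,\vep_n)$, of length $\ell:=n-i+1$, with the prefix $(\vep_1^*,\ldots,\vep_\ell^*)$ of $\vep^*(1,\beta)$. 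Admissibility of the original word forbids the first discrepancy from being strictly larger, so either there is a coordinate where $(\vep_i,\ldots,\vep_n)$ is strictly smaller — in which case the comparison is already decided and the suffix is $\prec\vep^*(1,\beta)$ irrespective of $m$ — or the two blocks coincide, the genuinely delicate case.

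In that delicate case the suffix reads $(\vep_1^*,\ldots,\vep_\ell^*,0,\ldots,0,\eta_1,\ldots)$, with the appended run of $m+1$ zeros occupying positions $\ell+1,\ldots,\ell+m+1$, matched against the tail $(\vep_{\ell+1}^*,\vep_{\ell+2}^*,\ldots)$ of $\vep^*(1,\beta)$. Here I would invoke the definition of $l_\ell(\beta)$: the expansion $\vep^*(1,\beta)$ carries a run of zeros of length $l_\ell(\beta)$ immediately after position $\ell$ and then a digit $\ge1$ at position $\ell+l_\ell(\beta)+1$. Since $\ell\le n$ gives $l_\ell(\beta)\le M_n(\beta)\le m$, the appended zeros outlast this run, so at position $\ell+l_\ell(\beta)+1$ the suffix still shows a $0$ against a digit $\ge1$, whence the suffix is $\prec\vep^*(1,\beta)$. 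This last step is the crux, and the only place where the hypothesis $m\ge M_n(\beta)$ is spent: one must append strictly more zeros than the longest zero-run occurring after any of the first $n$ positions of $\vep^*(1,\beta)$, so as to force a strict lexicographic drop before the arbitrary tail $\eta$ can interfere — which is precisely why the quantity $M_n(\beta)$ appears. Once all suffixes are shown to be $\prec\vep^*(1,\beta)$, Parry's criterion yields admissibility of every such concatenation, and Lemma~\ref{lemma 31}(3) then gives fullness. (A length-based route via Lemma~\ref{lemma 32}(2) is also conceivable, but it would require quantitative lower bounds on $|I_n(\vep_1,\ldots,\vep_n)|$ in terms of the zero-runs, so the lexicographic argument above is the cleaner one.)
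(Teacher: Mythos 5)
Your argument is correct and complete. The paper itself offers no proof of this lemma --- it is quoted directly from \cite{LW} --- so there is nothing internal to compare against; but your route (reduce fullness to the concatenation criterion of Lemma~\ref{lemma 31}(3), then verify Parry's lexicographic condition of Theorem~\ref{theorem parry}(1) suffix by suffix) is exactly the standard argument one would expect behind the citation. You correctly isolate the only nontrivial case: a suffix starting at position $i\le n$ whose visible block coincides with the prefix $(\vep_1^*,\ldots,\vep_\ell^*)$ of $\vep^*(1,\beta)$, $\ell=n-i+1$; there the hypothesis $m\ge M_n(\beta)\ge l_\ell(\beta)$ guarantees that the appended zero block is still running at position $\ell+l_\ell(\beta)+1$, where $\vep^*(1,\beta)$ shows a digit $\ge1$ (such a position exists because the infinite expansion of $1$ has infinitely many nonzero digits, so $l_\ell(\beta)<\infty$), forcing the strict lexicographic drop before the arbitrary tail $\eta$ can matter. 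The remaining suffixes are handled as you say, using $\vep_1^*(1,\beta)=\lceil\beta\rceil-1\ge1$ for those starting inside the zero block. Your side remark about the index ($I_{n+m+1}$ rather than $I_{n+m}$, since the word has length $n+m+1$) is also right and is just a typo in the statement, consistent with the paper's later use of $I_{n+M}(\vep_1,\ldots,\vep_n,0^M)$ in Lemma~\ref{lemma 0 m}.
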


Since the sequence $\{l_n(\beta)\}$ is bounded for each parameter $\beta$ in $B_0$, by Lemma~\ref{lemma 31}, Lemma~\ref{lemma 32} and Lemma~\ref{lemma LW} we can easily obtain that

\begin{lem}\label{lemma 0 m}
	Let $\beta\in B_0$ and $(\vep_1,\ldots,\vep_n)$, $n\geq1$, be any admissible word. There exists an integer $M>0$ such that $I_{n+M}(\vep_1,\ldots,\vep_n,0^M)$ is full, which leads to that
	\begin{enumerate}
		\item any admissible word $(\eta_1,\ldots,\eta_m)$ can be concatenated behind $(\vep_1,\ldots,\vep_n,0^M)$;
		\item the length of cylinder $I_n(\vep_1,\ldots,\vep_n)$ satisfies  \[\beta^{-(n+M)}\leq|I_n(\vep_1,\ldots,\vep_n)|\leq\beta^{-n}.\]
	\end{enumerate}	
\end{lem}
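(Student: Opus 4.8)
The plan is to extract a single integer $M$ from the membership $\beta\in B_0$ and then feed it into the full-cylinder machinery of Lemmas~\ref{lemma 31}, \ref{lemma 32} and \ref{lemma LW}. Since $\beta\in B_0$, the sequence $\{l_n(\beta)\}_{n\geq1}$ is bounded, so I would set $L=\sup_{n\geq1}l_n(\beta)<\infty$; as the $l_n(\beta)$ are non-negative integers, $L$ is a well-defined non-negative integer. The crucial observation is that $M_n(\beta)=\max_{1\leq i\leq n}l_i(\beta)\leq L$ for every $n\geq1$, so a single bound controls the zero-padding needed at every stage and for every word. I would therefore take $M=L+1$ (an integer $\geq1$, independent of $n$ and of the particular word), and this uniformity — rather than any delicate estimate — is the real content of the statement.

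With this choice, Lemma~\ref{lemma LW} applied to the admissible word $(\vep_1,\ldots,\vep_n)$ with $m=L\geq M_n(\beta)$ shows that appending $0^{m+1}=0^{L+1}=0^{M}$ yields a full cylinder, that is, $I_{n+M}(\vep_1,\ldots,\vep_n,0^{M})$ is full. This single fact is what both consequences are then read off from.

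For consequence (1), fullness of $I_{n+M}(\vep_1,\ldots,\vep_n,0^{M})$ is, by the equivalence $(1)\Leftrightarrow(3)$ in Lemma~\ref{lemma 31}, exactly the assertion that $\vep\ast\eta$ is admissible for every admissible $\eta$, where $\vep=(\vep_1,\ldots,\vep_n,0^{M})$; hence any admissible word $(\eta_1,\ldots,\eta_m)\in\Sigma_\beta^m$ may be concatenated behind $(\vep_1,\ldots,\vep_n,0^{M})$. For consequence (2), the upper bound is the elementary estimate $|I_n(\vep_1,\ldots,\vep_n)|\leq\beta^{-n}$ valid for all $\beta>1$ and noted following \eqref{formula beta 1}, while the lower bound comes directly from Lemma~\ref{lemma 32}(2): applying its equivalence with $p=M$ to the full cylinder $I_{n+M}(\vep_1,\ldots,\vep_n,0^{M})$ gives $|I_n(\vep_1,\ldots,\vep_n)|\geq\beta^{-(n+M)}$. (Alternatively one notes the containment $I_{n+M}(\vep_1,\ldots,\vep_n,0^{M})\subseteq I_n(\vep_1,\ldots,\vep_n)$ together with the fact that fullness forces the former to have length exactly $\beta^{-(n+M)}$.) Combining the two bounds yields $\beta^{-(n+M)}\leq|I_n(\vep_1,\ldots,\vep_n)|\leq\beta^{-n}$.

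Since every step is a direct invocation of a previously established lemma, there is no genuine obstacle here; the only point requiring care is to make $M$ independent of $n$ and of the word, which is precisely where the hypothesis $\beta\in B_0$ (and not merely $\beta>1$) enters. For a general $\beta>1$ the quantity $M_n(\beta)$ need not stay bounded in $n$, so no uniform $M$ need exist, which confirms that boundedness of $\{l_n(\beta)\}$ is exactly the right hypothesis for the uniform length control that the later sections will require.
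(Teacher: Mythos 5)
Your proposal is correct and follows exactly the route the paper intends: the paper gives no detailed proof, merely citing the boundedness of $\{l_n(\beta)\}$ for $\beta\in B_0$ together with Lemmas~\ref{lemma 31}, \ref{lemma 32} and \ref{lemma LW}, and your argument (take $M=\sup_n l_n(\beta)+1$, apply Lemma~\ref{lemma LW} for fullness, then Lemma~\ref{lemma 31}(3) for consequence (1) and Lemma~\ref{lemma 32}(2) for consequence (2)) is precisely how those ingredients combine. Your emphasis that the hypothesis $\beta\in B_0$ is what makes $M$ uniform in $n$ and in the word is also the right reading of the lemma, since later sections use a single $M$ depending only on $\beta$.
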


Recall the root $\beta_m=\beta_m(\beta)$ defined in the equation~\eqref{definition beta m}. Define
\[B_1(\beta)=\left\{\beta_m(\beta)\colon\text{the root of \eqref{definition beta m} where}\ \vep_m^\ast(1,\beta)\geq1\right\}\]
and
\[B_1=\bigcup_{\beta>1}\big(B_1(\beta)\setminus\{1\}\big).\]
Then we have
\begin{lem}\label{lemma b1b0}
	$B_1\subset B_0$ and $B_1$ is dense in $(1,\infty)$.
\end{lem}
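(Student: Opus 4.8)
The plan is to prove the two assertions separately: the inclusion $B_1\subset B_0$ first, then the density. Throughout I will use the reduction that, by the definition \eqref{definition b0} of $B_0$, one has $\beta\in B_0$ if and only if $\sup_nl_n(\beta)<\infty$, i.e.\ if and only if the runs of consecutive zeros appearing in $\vep^*(1,\beta)$ are of uniformly bounded length. So the whole of the inclusion reduces to controlling zero-runs in $\vep^*(1,\beta_m)$.

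For the inclusion, fix $\beta>1$ and an index $m$ with $\vep_m^*(1,\beta)\geq1$, and set $\beta_m=\beta_m(\beta)$. The key point is to identify the infinite $\beta_m$-expansion of $1$. I claim that $\beta_m$ is a simple Parry number whose terminating $\beta_m$-expansion of $1$ is exactly $(\vep_1^*(1,\beta),\ldots,\vep_m^*(1,\beta))$, so that
\[
\vep^*(1,\beta_m)=\big(\vep_1^*(1,\beta),\ldots,\vep_{m-1}^*(1,\beta),\vep_m^*(1,\beta)-1\big)^\infty
\]
is purely periodic with period $m$. By the very definition \eqref{definition beta m}, the word $(\vep_1^*(1,\beta),\ldots,\vep_m^*(1,\beta))$ already produces the value $1$ in base $\beta_m$; to see that it is the greedy (hence admissible) expansion I would invoke the characterization in Theorem~\ref{theorem parry}(3). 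Concretely, writing $\tau$ for the periodic sequence displayed above and $\sigma$ for the shift, it suffices by periodicity to check $\sigma^k\tau\preceq\tau$ for $1\le k\le m-1$, and each such inequality is inherited from the self-admissibility $(\vep_{k+1}^*(1,\beta),\vep_{k+2}^*(1,\beta),\ldots)\preceq(\vep_1^*(1,\beta),\vep_2^*(1,\beta),\ldots)$ of the expansion of $1$ in base $\beta$, the lowering of the last digit in each period only reinforcing the inequality at the period boundary.

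Granting the periodicity, membership in $B_0$ is immediate. Since $\vep_1^*(1,\beta)=[\beta]\geq1$, the periodic sequence $\vep^*(1,\beta_m)$ carries a nonzero digit at every position congruent to $1$ modulo $m$; hence every maximal block of consecutive zeros lies strictly between two such positions and has length at most $m-1$. Thus $\sup_nl_n(\beta_m)\le m-1<\infty$, so $\beta_m\in B_0$, and as $\beta$ and $m$ were arbitrary this gives $B_1\subset B_0$. For the density, I would simply invoke the monotone approximation recorded right after \eqref{definition beta m}: for every $\beta>1$ the roots satisfy $\beta_m(\beta)\nearrow\beta$ as $m$ runs through the infinitely many indices with $\vep_m^*(1,\beta)\geq1$, and $\beta_m(\beta)>1$ for all large $m$, so $\beta_m(\beta)\in B_1$ and $\beta\in\overline{B_1}$. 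Since $\beta>1$ was arbitrary, $\overline{B_1}=(1,\infty)$, i.e.\ $B_1$ is dense.

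The genuinely technical step is the identification of $\vep^*(1,\beta_m)$ — that is, upgrading the mere algebraic relation \eqref{definition beta m} to the statement that the truncated word is admissible and greedy in base $\beta_m$, and hence that the expansion of $1$ is eventually periodic. The careful bookkeeping of the lexicographic comparisons $\sigma^k\tau\preceq\tau$ at the period boundary, where the final digit of each period has been decreased by one, is the only place where the argument must be done with genuine care. Everything downstream — the passage from periodicity to bounded zero-runs, the resulting membership in $B_0$, and the density via $\beta_m(\beta)\nearrow\beta$ — is then routine.
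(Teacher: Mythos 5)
Your proposal is correct and follows essentially the same route as the paper: both rest on the key fact that $\vep^*(1,\beta_m)$ is the $m$-periodic sequence $(\vep_1^*(1,\beta),\ldots,\vep_{m-1}^*(1,\beta),\vep_m^*(1,\beta)-1)^\infty$, from which bounded zero-runs (hence $\beta_m\in B_0$) and density (via $\beta_m\nearrow\beta$) follow. You simply supply more detail than the paper does, in particular the lexicographic verification via Theorem~\ref{theorem parry}(3) that the paper leaves implicit.
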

\begin{proof}
	It is followed by the fact that the digit sequence of expansion of 1 under base $\beta_m$ is the $m$-periodic sequence $(\vep_1^\ast(1,\beta),\ldots,\vep_{m-1}^\ast(1,\beta),\vep_m^\ast(1,\beta)-1)^\infty$, where the integer $m$ satisfies $\vep_m^\ast(1,\beta)\geq1$. The second conclusion is obvious since $\beta_m\to\beta$ as $m\to\infty$.
\end{proof}

We point out that the formula \eqref{definiton h alpha} may turn into a simpler form when $\beta\in B_1$ as below:
\begin{align}\label{definiton h beta alpha}
	h^\beta(\alpha)=\lim_{\delta\to0}\liminf_{n\to\infty}\frac{\log h^\beta(\alpha,n,\delta)}{(\log \beta)n}=\lim_{\delta\to0}\limsup_{n\to\infty}\frac{\log h^\beta(\alpha,n,\delta)}{(\log \beta)n}.
\end{align}
Moreover, we would like to collect more in the following lemma.

\begin{lem}[See Proposition 4.2 and Proposition 4.4 in \cite{TWWX}]\label{lemma tan}
	Let $\beta>1$.
	\begin{enumerate}
		\item If $\beta_m=\beta_m(\beta)\in B_1$, $m\geq1$, then
		\begin{align*}
			h^\beta(\alpha)&=\lim_{\delta\to0}\lim_{m\to\infty}\liminf_{n\to\infty}\frac{\log h^{\beta_m}(\alpha,n,\delta)}{(\log \beta)n}=\lim_{\delta\to0}\lim_{m\to\infty}\limsup_{n\to\infty}\frac{\log h^{\beta_m}(\alpha,n,\delta)}{(\log \beta)n}\\
			&=\lim_{\delta\to0}\liminf_{n\to\infty}\frac{\log h^\beta(\alpha,n,\delta)}{(\log \beta)n}=\lim_{\delta\to0}\limsup_{n\to\infty}\frac{\log h^\beta(\alpha,n,\delta)}{(\log \beta)n}.
		\end{align*} 
	 	\item $h^\beta(\alpha)$ is a concave function and continuous on $I_\beta$.
	\end{enumerate}
\end{lem}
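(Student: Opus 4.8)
The plan is to run both parts off a single engine: the specification property enjoyed by every base in $B_0$, which by Lemma~\ref{lemma b1b0} contains $B_1$. Throughout part (1) I read the hypothesis as $\beta\in B_1$, so that $\beta\in B_0$ and each approximating root $\beta_m=\beta_m(\beta)\in B_1\subset B_0$ as well; in particular both the $\beta$-shift and every $\beta_m$-shift allow concatenation of admissible words across a bridge of bounded length and bounded digit sum. The elementary half of (1) comes first: since $\beta_m\nearrow\beta$ and $\Sigma_{\beta_m}\subset\Sigma_\beta$ by (2) in Theorem~\ref{theorem parry}, the counts obey $h^{\beta_m}(\alpha,n,\delta)\le h^{\beta_{m+1}}(\alpha,n,\delta)\le h^\beta(\alpha,n,\delta)$. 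Monotonicity in $m$ guarantees the inner $m$-limits in \eqref{definiton h alpha} exist, and a term-by-term comparison shows that both double limits over $\{\beta_m\}$ are $\le$ the corresponding single-base quantities at $\beta$. So what remains in (1) is the reverse inequality together with the coincidence of $\liminf$ and $\limsup$.

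I would get the collapse of $\liminf$ and $\limsup$ from a Fekete/superadditivity argument powered by specification. Fix a base $\gamma\in B_0$ and the constant $M$ from Lemma~\ref{lemma 0 m}: any $\gamma$-admissible word followed by $0^{M}$ becomes full (Corollary~\ref{corollary full word}), after which an arbitrary admissible word may be appended. Splicing a word counted in $H^\gamma(\alpha,n,\delta)$ and one counted in $H^\gamma(\alpha,n',\delta)$ through such a bounded bridge produces a word of length $n+n'+O(1)$ whose digit sum stays in the window after enlarging $\delta$ slightly, and the map is injective since the two factors and the bridge are recoverable. Hence $\log h^\gamma(\alpha,\cdot,\delta)$ is superadditive up to a bounded shift in the length, so $\lim_n \frac{\log h^\gamma(\alpha,n,\delta)}{n}$ exists; applied at $\gamma=\beta$ this equates the third and fourth quantities in (1), and applied at each $\gamma=\beta_m$ it equates the first and second, which is exactly the reduction recorded in \eqref{definiton h beta alpha}. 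The reverse inequality is the part I expect to be the \emph{main obstacle}, since it is base continuity. Writing
\[
\frac{\log h^{\beta_m}(\alpha,n,\delta)}{(\log\beta)n}=\frac{\log\beta_m}{\log\beta}\cdot\frac{\log h^{\beta_m}(\alpha,n,\delta)}{(\log\beta_m)n}
\]
and using $\log\beta_m\to\log\beta$, the task is to approximate $\beta$-admissible words of the prescribed digit frequency by $\beta_m$-admissible ones, losing only a subexponential factor. Because the expansion of $1$ under $\beta_m$ is the purely periodic sequence agreeing with $\vep^*(1,\beta)$ on its first $m-1$ digits (proof of Lemma~\ref{lemma b1b0}), a $\beta$-admissible word fails to be $\beta_m$-admissible only where some suffix matches $\vep^*(1,\beta)$ for at least $m$ symbols; periodically inserting a short zero block (again licensed by Lemma~\ref{lemma 0 m} for $\beta_m$) breaks every such match while altering length and digit sum by a vanishing proportion as the spacing grows. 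This yields $h^{\beta_m}(\alpha,n,\delta')\ge h^\beta(\alpha,n,\delta)e^{-o(n)}$ for $m$ large depending on $\delta'-\delta$, and after $n\to\infty$, $m\to\infty$, $\delta\to0$ it supplies the missing inequality and closes (1).

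For part (2) I would prove concavity by the same concatenation device, now joining different frequencies. Working at a base $\gamma\in B_0$, fix $\alpha=t\alpha_1+(1-t)\alpha_2$ with $t\in(0,1)$, put $n_1\approx tn$ and $n_2\approx(1-t)n$, and concatenate a word from $H^\gamma(\alpha_1,n_1,\delta)$ with one from $H^\gamma(\alpha_2,n_2,\delta)$ through a bounded full bridge. The result has length $n+O(1)$ and digit sum within $n\delta'$ of $n\alpha$, and the construction is injective, whence $h^\gamma(\alpha,n,\delta')\ge h^\gamma(\alpha_1,n_1,\delta)\,h^\gamma(\alpha_2,n_2,\delta)$. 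Taking logarithms, dividing by $(\log\beta)n$, and letting $n\to\infty$ then $\delta\to0$ gives, at each $\gamma=\beta_m$, the intrinsic inequality $h^{\beta_m}(\alpha)\ge t\,h^{\beta_m}(\alpha_1)+(1-t)\,h^{\beta_m}(\alpha_2)$, so every $h^{\beta_m}$ is concave. Since the limits in $m$ and in $\delta$ appearing in \eqref{definiton h alpha} are monotone, they may be interchanged, giving $h^\beta(\alpha)=\lim_{m\to\infty}h^{\beta_m}(\alpha)$ as a pointwise limit of concave functions; hence $h^\beta$ is concave for arbitrary $\beta>1$.

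Finally, continuity follows from concavity. A finite concave function is automatically continuous on the interior of $I_\beta$, so only the endpoints $0$ and $\Lambda(\beta)$ remain, and at these the chord inequality already forces $\liminf_{\alpha\to\partial}h^\beta(\alpha)\ge h^\beta(\partial)$. It therefore suffices to rule out a downward jump, i.e.\ to verify upper semicontinuity at the two endpoints, which I would do by a direct counting estimate: near $\alpha=0$ only words of very small digit sum are admitted, forcing $h^\beta(\alpha)\to0=h^\beta(0)$, and the situation at $\Lambda(\beta)$ is symmetric through the maximal admissible digit pattern. Combining interior continuity with these endpoint estimates yields continuity on all of $I_\beta$, completing (2).
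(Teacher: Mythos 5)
You should first note that the paper contains no proof of this lemma at all: it is imported verbatim from Propositions 4.2 and 4.4 of \cite{TWWX}, so your reconstruction can only be judged on its own terms. The toolkit you pick (specification via the bounded bridge of Lemma~\ref{lemma 0 m} at bases in $B_1\subset B_0$, Fekete superadditivity, zero-block insertion to pass between $\beta$ and $\beta_m$) is the right one, but there is a genuine gap at the very start of part (1): you ``read the hypothesis as $\beta\in B_1$'', whereas the lemma is stated, and is used by the paper (in Lemma~\ref{lemma relation}, Lemma~\ref{lemma logcard}, and the proof of Theorem~\ref{theorem main theorem}), for arbitrary $\beta>1$; the stated hypothesis $\beta_m(\beta)\in B_1$ is automatic by Lemma~\ref{lemma b1b0} and is no restriction on $\beta$. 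Your collapse of the third and fourth quantities by Fekete ``applied at $\gamma=\beta$'' requires the uniform constant $M$ of Lemma~\ref{lemma 0 m}, which exists only for $\gamma\in B_0$; since $\mathcal{L}(B_0)=0$ (Lemma~\ref{lemma A zero}), for typical $\beta$ the zero-runs $l_n(\beta)$ are unbounded and the splicing step fails outright. The irony is that the two inequalities you do prove --- the monotone comparison $h^{\beta_m}(\alpha,n,\delta)\le h^{\beta}(\alpha,n,\delta)$ and the zero-insertion estimate $h^{\beta_m}(\alpha,n',\delta')\ge h^{\beta}(\alpha,n,\delta)e^{-o(n)}$ --- already sandwich both $\lim_{\delta\to0}\liminf_n$ and $\lim_{\delta\to0}\limsup_n$ at base $\beta$ between the two double-limit quantities of \eqref{definiton h alpha}, with Fekete needed only at the bases $\beta_m\in B_0$. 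So the general statement follows from your own ingredients once you restructure the argument; as written, it covers only the measure-zero set $B_1$.

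Part (2) has two further gaps. First, the claim that the limits in $m$ and $\delta$ ``are monotone, so they may be interchanged'' is false as a general principle: $\delta\to0$ is a decreasing (infimum) limit while $m\to\infty$ is an increasing (supremum) limit, and $\inf_\delta\sup_m g_m(\delta)\ge\sup_m\inf_\delta g_m(\delta)$ can be strict even for families monotone in each variable (take $g_m(\delta)=1$ if $\delta\ge 1/m$ and $g_m(\delta)=0$ otherwise). Hence $h^\beta=\lim_m h^{\beta_m}$ pointwise is not established, and your route to concavity of $h^\beta$ for general $\beta$ collapses. The repair is to bypass the interchange entirely: your concatenation inequality $h^{\beta_m}(\alpha,n,\delta')\ge h^{\beta_m}(\alpha_1,n_1,\delta)\,h^{\beta_m}(\alpha_2,n_2,\delta)$ gives the chord inequality for the quantities $\liminf_n \log h^{\beta_m}(\cdot,n,\delta)/\bigl((\log\beta)n\bigr)$ (full limits exist at $\beta_m\in B_0$ by your Fekete step), and that inequality passes through the monotone limits $m\to\infty$ and $\delta\to0$ appearing in \eqref{definiton h alpha}, yielding concavity of $h^\beta$ directly. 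Second, your endpoint argument at $\Lambda(\beta)$ is not ``symmetric'' to $\alpha=0$: near $0$ a small digit sum crudely caps the number of words, but near $\Lambda(\beta)$ no analogous count is available, since large digit averages do not by themselves bound cardinalities. This is fixable without any counting: upper semicontinuity of $h^\beta$ holds automatically at every point from the window inclusion $h^{\beta}(\alpha,n,\delta)\le h^{\beta}(\alpha_0,n,\delta+|\alpha-\alpha_0|)$ (and likewise for each $\beta_m$), and combined with the lower semicontinuity at the endpoints that you correctly extract from the chord inequality, this gives continuity on all of $I_\beta$.
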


The second conclusion in the above lemma also indicates that the function $h^\beta(\alpha)$ is increasing on $[0,\alpha^\ast(\beta)]$ and decreasing on $[\alpha^\ast(\beta),\Lambda(\beta)]$. These properties will be used in the following sections when the classified discussions for the corresponding proofs are needed.

\section{Besicovitch sets}
In this section, we will introduce the notions of the lower and upper algebraic average of $\beta$-expansion of numbers. Based on them,  we will give the definitions of lower and upper Besicovitch sets and then determine their Hausdorff dimensions.  

Let $\beta>1$ and $x\in I$. Recall the notation $A_n(x,\beta)=S_n(x,\beta)/n$, $n\geq1$, in the first section. Then denote respectively by
\begin{align}\label{definition axb}
	\b{A}(x,\beta)=\liminf_{n\to\infty}A_n(x,\beta)\quad\text{and}\quad \bar{A}(x,\beta)=\limsup_{n\to\infty}A_n(x,\beta),
\end{align}
the lower and upper algebraic averages of $\beta$-expansion of $x$. If $\b{A}(x,\beta)=\bar{A}(x,\beta)$, then the common value is just the algebraic average $A(x,\beta)=\lim_{n\to\infty}S_n(x,\beta)/n$ being stated in \eqref{AnnSn}. Let $\alpha\in I_\beta$. Define the following Besicovitch set in $\beta$-expansion
\[E^\beta(\alpha)=\{x\in I\colon A(x,\beta)=\alpha\}.\]
Then, for the size of $E^\beta(\alpha)$ we have
\begin{lem}[See Corollary 1.3 in \cite{LL}]\label{lemma e beta alpha}
	Let $\beta>1$ and $\alpha\in I_\beta$. Then  \[\dim_HE^\beta(\alpha)=h^\beta(\alpha).\]
\end{lem}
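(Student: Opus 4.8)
The plan is to bound $\dim_H E^\beta(\alpha)$ from above and below by $h^\beta(\alpha)$ separately, the upper bound by a direct covering argument and the lower bound by a Moran-type construction inside $E^\beta(\alpha)$ feeding the mass distribution principle. Throughout I would use the characterization from Lemma~\ref{lemma tan}, which lets me replace the double-limit definition \eqref{definiton h alpha} by the single-base expressions $h^\beta(\alpha)=\lim_{\delta\to0}\liminf_{n\to\infty}\frac{\log h^\beta(\alpha,n,\delta)}{(\log\beta)n}=\lim_{\delta\to0}\limsup_{n\to\infty}\frac{\log h^\beta(\alpha,n,\delta)}{(\log\beta)n}$.

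For the upper bound, fix $\delta>0$ and set $u(\delta)=\limsup_{n\to\infty}\frac{\log h^\beta(\alpha,n,\delta)}{(\log\beta)n}$. If $A(x,\beta)=\alpha$ then the prefix $(\vep_1(x,\beta),\ldots,\vep_n(x,\beta))$ eventually belongs to $H^\beta(\alpha,n,\delta)$, so $E^\beta(\alpha)\subseteq\bigcup_{N\ge1}G_N$, where $G_N$ consists of those $x$ whose length-$n$ prefix lies in $H^\beta(\alpha,n,\delta)$ for every $n\ge N$. By countable stability of Hausdorff dimension it suffices to bound each $\dim_H G_N$. For any single $n\ge N$, the set $G_N$ is covered by the $h^\beta(\alpha,n,\delta)$ cylinders indexed by $H^\beta(\alpha,n,\delta)$, each of diameter at most $\beta^{-n}$; hence for any $s>u(\delta)$ the $s$-sum of this cover is at most $h^\beta(\alpha,n,\delta)\beta^{-ns}\to0$, giving $\dim_H G_N\le u(\delta)$. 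Letting $\delta\to0$ yields $\dim_H E^\beta(\alpha)\le h^\beta(\alpha)$.

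For the lower bound I would first treat $\beta\in B_0$, where the runs $\{l_n(\beta)\}$ are bounded so that, by Lemmas~\ref{lemma 31}, \ref{lemma 32}, \ref{lemma LW} and \ref{lemma 0 m} and Corollary~\ref{corollary full word}, any admissible word can be padded by a \emph{bounded} zero block $0^{M}$ into a full cylinder of length exactly $\beta^{-(n+M)}$ times that of its parent, after which any admissible word may be concatenated. Fixing $\delta>0$ and $\gamma>0$, the liminf form gives $h^\beta(\alpha,n,\delta)\ge\beta^{n(s(\delta)-\gamma)}$ for all large $n$, with $s(\delta)\uparrow h^\beta(\alpha)$ as $\delta\to0$. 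I would then build a homogeneous Moran set $F\subseteq E^\beta(\alpha)$ along a fast-growing block sequence $n_1<n_2<\cdots$: at stage $k$ each node is extended by a word chosen from $H^\beta(\alpha,n_k,\delta_k)$ with $\delta_k\downarrow0$, followed by the buffer $0^{M}$. The buffers carry no digit mass and have negligible relative length, so $S_n(x,\beta)/n\to\alpha$ for every $x\in F$, i.e. $F\subseteq E^\beta(\alpha)$; spreading mass uniformly over the selected words and invoking the mass distribution principle, the full-cylinder lengths convert the branching $\ge\beta^{n_k(s(\delta_k)-\gamma)}$ into the local-dimension bound $\dim_H F\ge s(\delta)-\gamma$. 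Letting $\gamma,\delta\to0$ gives $\dim_H E^\beta(\alpha)\ge h^\beta(\alpha)$ for $\beta\in B_0$.

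To remove the restriction $\beta\in B_0$, I would approximate by the roots $\beta_m\uparrow\beta$ of \eqref{definition beta m}, which lie in $B_1\subseteq B_0$ (Lemma~\ref{lemma b1b0}) and satisfy $\Sigma_{\beta_m}\subset\Sigma_\beta$ (Theorem~\ref{theorem parry}). Every $\beta_m$-admissible word is $\beta$-admissible with the same digit sum, so the construction above may be run with blocks drawn from $H^{\beta_m}(\alpha,n_k,\delta_k)$ and the resulting sequences read off in base $\beta$; the definition \eqref{definiton h alpha} is normalized by $\log\beta$ precisely so that, as $m\to\infty$, the branching $h^{\beta_m}(\alpha,n,\delta)$ matched against base-$\beta$ cylinder lengths recovers $h^\beta(\alpha)$. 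I expect the lower bound to be the main obstacle, and within it the decisive difficulty is the geometric control for $\beta\notin B_0$: there the buffer length $M_n(\beta)$ needed to realize a full base-$\beta$ cylinder (Lemma~\ref{lemma LW}) is no longer bounded, so one must tune the growth of $n_k$ against the accumulated length $n_1+\cdots+n_{k-1}$ and exploit the bounded runs of the approximants $\beta_m$ to keep the normalization consistent, which is exactly the content secured by the coincidence of the two forms of $h^\beta(\alpha)$ in Lemma~\ref{lemma tan}. One must simultaneously force the averages to converge \emph{exactly} to $\alpha$ (hence $\delta_k\downarrow0$) without letting the per-stage branching, and thus the dimension, fall below $h^\beta(\alpha)$; the usual Moran local-dimension dips at the ends of long blocks make this balance the delicate point of the argument.
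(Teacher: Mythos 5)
The first thing to note is that the paper does not prove this lemma at all: it is imported verbatim as Corollary~1.3 of \cite{LL}, so there is no in-paper argument to compare against. Your sketch is nonetheless essentially the standard Besicovitch--Eggleston-type proof, which is what the cited source carries out and which is consistent with the machinery this paper builds for its own results (the covering argument of Proposition~\ref{prop E}, the full-word/Moran apparatus of Section~4, and the $\beta_m$-approximation in the proof of Theorem~\ref{theorem main theorem}). Your upper bound is correct and clean: the single-$n$ cover by the cylinders of $H^\beta(\alpha,n,\delta)$ together with the single-base form of $h^\beta(\alpha)$ from Lemma~\ref{lemma tan}(1) does the job.

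Two points in the lower bound need repair. First, choosing a \emph{fast-growing} block sequence $n_1<n_2<\cdots$ and filling stage $k$ with a single generic word from $H^\beta(\alpha,n_k,\delta_k)$ does not give $S_n(x,\beta)/n\to\alpha$ at positions $n$ interior to a block: a word in $H^\beta(\alpha,n_k,\delta_k)$ only controls its total digit sum, so all of its mass may sit at one end, and if $n_k$ is comparable to $n_1+\cdots+n_{k-1}$ the running average can swing far from $\alpha$. You must either make the blocks grow slowly (equivalently, repeat each block length $m_k$ times with $m_k$ large before passing to level $k+1$), or use a hierarchical word set in which long blocks decompose into short sub-blocks each with the correct digit sum --- this is exactly what the paper's $\alpha$-Moran construction and Lemma~\ref{remark decompose} are engineered to provide. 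Second, for $\beta\notin B_0$ your plan of reading $\beta_m$-full words directly in base $\beta$ runs into the difficulty you yourself identify (no uniform lower bound on base-$\beta$ cylinder lengths), and the coincidence of the two forms of $h^\beta(\alpha)$ does not by itself resolve it; the clean route, used in the paper's proof of Theorem~\ref{theorem main theorem}, is to build the set entirely inside the $\beta_m$-system, compute its dimension there, and transfer via the $(\log\beta_m/\log\beta)$-Lipschitz map $g=\pi_{\beta_m}\circ\pi_\beta^{-1}$, obtaining $\dim_HE^\beta(\alpha)\geq\frac{\log\beta_m}{\log\beta}h^{\beta_m}(\alpha)\to h^\beta(\alpha)$ by the definition \eqref{definiton h alpha}. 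With those two adjustments your outline becomes a complete proof.
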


Recall the definition of $\alpha^\ast(\beta)$ in \eqref{Ax beta}. Since $A(x,\beta)=\alpha^\ast(\beta)$ for almost all $x\in I$ by Birkhoff's ergodic theory, we have 
\[\dim_HE^\beta(\alpha^\ast(\beta))=\dim_H\{x\in I\colon A(x,\beta)=\alpha^\ast(\beta)\}=1.\]
Thus, Lemma~\ref{lemma e beta alpha} gives that
\begin{cor}\label{corollary ast beta}
	Let $\beta>1$. Then we have $h^\beta(\alpha^\ast(\beta))=1$.
\end{cor}

Furthermore, define respectively the lower and upper Besicovitch sets in $\beta$-expansion as follows: 
\[\b{E}^\beta(\alpha)=\{x\in I\colon\b{A}(x,\beta)\geq\alpha\}\quad\text{and}\quad \bar{E}^\beta(\alpha)=\{x\in I\colon\bar{A}(x,\beta)\leq\alpha\},\]
where $\alpha\in I_\beta$. Then, for the sizes of $\b{E}^\beta(\alpha)$ and $\bar{E}^\beta(\alpha)$, we have

\begin{pro}\label{prop E}
    Let $\beta>1$ and $\alpha\in I_\beta$. Then 
	\begin{align} 
	\dim_H\b{E}^\beta(\alpha)=\begin{cases} 1,\indent &0\leq\alpha\leq\alpha^\ast(\beta);\\
		h^\beta(\alpha),\indent &\alpha^\ast(\beta)<\alpha\leq\Lambda(\beta),\end{cases} 
    \end{align}	
and
	\begin{align}\label{bar E alpha} 
	\dim_H\bar{E}^\beta(\alpha)=\begin{cases} h^\beta(\alpha),\indent &0\leq\alpha\leq\alpha^\ast(\beta);\\
		1,\indent &\alpha^\ast(\beta)<\alpha\leq\Lambda(\beta).\end{cases} 
    \end{align}	
\end{pro}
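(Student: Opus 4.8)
The plan is to prove each of the two displayed formulas by establishing matching upper and lower bounds for the Hausdorff dimensions, exploiting the monotonicity and concavity of $h^\beta$ recorded in Lemma~\ref{lemma tan} and Corollary~\ref{corollary ast beta}. I will treat $\b{E}^\beta(\alpha)$ in detail; the case of $\bar{E}^\beta(\alpha)$ is symmetric. First observe the elementary set-theoretic inclusions relating the one-sided Besicovitch sets to the exact level sets $E^\beta(\gamma)$ whose dimension is known from Lemma~\ref{lemma e beta alpha}. For the \emph{easy regime} $0\leq\alpha\leq\alpha^\ast(\beta)$, I would note that $E^\beta(\alpha^\ast(\beta))\subseteq\b{E}^\beta(\alpha)$ whenever $\alpha\leq\alpha^\ast(\beta)$, since $\b{A}(x,\beta)=\alpha^\ast(\beta)\geq\alpha$ on a set of full measure; combined with $\dim_H E^\beta(\alpha^\ast(\beta))=1$ (Corollary~\ref{corollary ast beta}) and the trivial upper bound $\dim_H\b{E}^\beta(\alpha)\leq 1$, this gives dimension $1$ immediately.

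The substantive regime is $\alpha^\ast(\beta)<\alpha\leq\Lambda(\beta)$, where I must show $\dim_H\b{E}^\beta(\alpha)=h^\beta(\alpha)$. For the \emph{lower bound}, the containment $E^\beta(\alpha)\subseteq\b{E}^\beta(\alpha)$ is clear (if $A(x,\beta)=\alpha$ then certainly $\b{A}(x,\beta)=\alpha\geq\alpha$), so Lemma~\ref{lemma e beta alpha} yields $\dim_H\b{E}^\beta(\alpha)\geq h^\beta(\alpha)$ at once. The \emph{upper bound} is the heart of the matter. Here I would decompose
\begin{align*}
	\b{E}^\beta(\alpha)=\{x\in I\colon\liminf_{n\to\infty}A_n(x,\beta)\geq\alpha\}
\end{align*}
and cover it by cylinders on which the partial-sum averages are forced to stay above roughly $\alpha$. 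Concretely, fix a small $\delta>0$; for $x\in\b{E}^\beta(\alpha)$ there is some $N$ with $S_n(x,\beta)>n(\alpha-\delta)$ for all $n\geq N$, so $\b{E}^\beta(\alpha)$ is contained in a countable union (over $N$) of sets each covered, at every scale $n\geq N$, by the cylinders counted by $h^\beta(\alpha-\delta,n,\delta')$-type quantities. The plan is to pass to $\beta\in B_0$ or $\beta\in B_1$ first, where the full-word machinery of Lemmas~\ref{lemma 31}--\ref{lemma 0 m} gives clean control $\beta^{-(n+M)}\leq|I_n|\leq\beta^{-n}$ on cylinder lengths, carry out the covering estimate there using the definition \eqref{definiton h beta alpha} of $h^\beta$, and then approximate a general $\beta>1$ from below by the roots $\beta_m$ via the monotonicity $\Sigma_{\beta_m}\subset\Sigma_\beta$ together with the limiting formula \eqref{definiton h alpha}.

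The key computation for the upper bound is to show that, because $\alpha>\alpha^\ast(\beta)$ lies in the \emph{decreasing} branch of $h^\beta$, counting the admissible $n$-words with digit sum at least $n(\alpha-\delta)$ gives a cover of $s$-dimensional mass controlled by $h^{\beta_m}(\alpha-\delta,n,\delta')\cdot\beta^{-ns}$; letting $n\to\infty$ this tends to zero as soon as $s>h^\beta(\alpha-\delta)$, and then letting $\delta\to0$ and using the continuity of $h^\beta$ from Lemma~\ref{lemma tan}(2) collapses the bound to $h^\beta(\alpha)$. I expect the \textbf{main obstacle} to be making this one-sided ($\liminf\geq\alpha$, an open-ended constraint rather than an equality) covering argument rigorous uniformly across scales: unlike the exact level set, the constraint only pins down the lower tail of the digit sums, so I must verify that restricting to $S_n\geq n(\alpha-\delta)$ does not count substantially more words than $h^{\beta_m}(\alpha,n,\delta)$ up to the exponential order. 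This is precisely where the concavity and the location $\alpha>\alpha^\ast(\beta)$ are essential—the maximum of the counting exponent over digit sums $\geq n(\alpha-\delta)$ is attained at the left endpoint—so the estimate degrades gracefully to $h^\beta(\alpha)$ rather than to the larger value $h^\beta(\alpha^\ast(\beta))=1$. Once this is settled for $\b{E}^\beta$, the formula \eqref{bar E alpha} for $\bar{E}^\beta(\alpha)$ follows by the same argument with the roles of the increasing and decreasing branches interchanged.
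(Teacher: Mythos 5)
Your overall strategy coincides with the paper's: dimension $1$ on the trivial branch via the full-measure level set, the lower bound on the nontrivial branch via $E^\beta(\alpha)\subseteq\b{E}^\beta(\alpha)$ together with Lemma~\ref{lemma e beta alpha}, and the upper bound via a covering by cylinders satisfying a one-sided digit-sum constraint, with the count collapsed to $h^\beta(\alpha)$ by slicing into windows of width $2\delta n$ and using that $\alpha$ lies on the monotone branch of $h^\beta$ --- this last point is exactly the content of Lemma~\ref{lemma relation}(2)--(3), which the paper invokes for the same purpose.

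The one step that would fail is your treatment of general $\beta>1$ in the upper bound. You propose to carry out the covering estimate for $\beta\in B_0$ or $B_1$ and then ``approximate a general $\beta>1$ from below by the roots $\beta_m$'' using $\Sigma_{\beta_m}\subset\Sigma_\beta$. Approximation from below transfers \emph{lower} bounds on dimension (as in the proof of Theorem~\ref{theorem main theorem}), not upper bounds: the inclusion $\Sigma_{\beta_m}\subset\Sigma_\beta$ points the wrong way, the set $\b{E}^\beta(\alpha)$ contains points whose $\beta$-digit sequences are not $\beta_m$-admissible, so the $\beta_m$-cylinders counted by $\b{h}^{\beta_m}(\alpha,n,\delta)$ do not cover it, and knowing $\dim_H\b{E}^{\beta_m}(\alpha)\leq h^{\beta_m}(\alpha)$ for every $m$ gives no control on $\dim_H\b{E}^{\beta}(\alpha)$. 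The detour through $B_0$ is also unnecessary: a covering argument needs only the one-sided estimate $|I_n(\vep_1,\ldots,\vep_n)|\leq\beta^{-n}$, which holds for every $\beta>1$, not the two-sided control of Lemma~\ref{lemma 0 m}. The paper therefore runs the covering directly in base $\beta$, covering $\bar{E}^\beta(\alpha)$ by the cylinders indexed by $\bar{H}^\beta(\alpha,n,\delta)\subset\Sigma_\beta^n$ and using Lemma~\ref{lemma relation}(1) --- which identifies the $\beta_m$-limit definition of $\bar{h}^\beta(\alpha)$ with the direct $\limsup$ in base $\beta$ --- to bound $\bar{h}^\beta(\alpha,n,\delta)$ by $\beta^{n(\bar{h}^\beta(\alpha)+\eta/2)}$ for large $n$. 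Replace your approximation step by this direct estimate and the rest of your argument goes through.
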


Note that Proposition \ref{prop E} generalizes the classic work in \cite{Be} of Besicovitch on binary expansion. In fact, if $\beta=2$,  then we can easily obtain that $\alpha^*(\beta)=1/2$, $\Lambda(2)=1$, $I_2=[0,1]$ and by the Stirling's approximation we also have
\[h^2(\alpha)=\frac{H(\alpha)}{\log2},\quad\text{where}\ H(\alpha)=-\alpha\log\alpha-(1-\alpha)\log(1-\alpha),\ \alpha\in I_2.\]

To prove Proposition \ref{prop E}, we would like to present firstly a lemma about the relation among the $\beta$-adic entropy function, the $\beta$-adic lower entropy function and the $\beta$-adic upper entropy function. The corresponding notations of them are given in the following.

Let $\beta>1$, $\alpha\in I_\beta$, $n\geq1$ and $\delta>0$. Denote  
\[\b{H}^\beta(\alpha,n,\delta)=\left\{(\vep_1,\ldots,\vep_n)\in \Sigma_\beta^n\colon\sum_{i=1}^n\vep_i>n(\alpha-\delta)\right\}\]
and $\b{h}^\beta(\alpha,n,\delta)=\card\b{H}^\beta(\alpha,n,\delta)$. Then define the $\beta$-adic lower entropy function
\begin{align}\label{definition b h}
	\b{h}^\beta(\alpha)=\lim_{\delta\to0}\lim_{m\to\infty}\liminf_{n\to\infty}\frac{\log\b{h}^{\beta_m}(\alpha,n,\delta)}{(\log \beta)n}, 
\end{align}
Here and in the sequel, the number $\beta_m$ with $m\geq1$ is given in \eqref{definition beta m}. Symmetrically, denote  
\[\bar{H}^\beta(\alpha,n,\delta)=\left\{(\vep_1,\ldots,\vep_n)\in \Sigma_\beta^n\colon\sum_{i=1}^n\vep_i<n(\alpha+\delta)\right\}\]
and $\bar{h}^\beta(\alpha,n,\delta)=\card\bar{H}^\beta(\alpha,n,\delta)$. Then define the $\beta$-adic upper entropy function 
\begin{align}\label{definition bar h}
	\bar{h}^\beta(\alpha)=\lim_{\delta\to0}\lim_{m\to\infty}\limsup_{n\to\infty}\frac{\log\bar{h}^{\beta_m}(\alpha,n,\delta)}{(\log \beta)n}.
\end{align}

\begin{lem}\label{lemma relation}
	Let $\beta>1$. 
	\begin{enumerate}
		\item For any $\alpha\in I_\beta$, we have
		\[\b{h}^\beta(\alpha)=\lim_{\delta\to0}\liminf_{n\to\infty}\frac{\log\b{h}^{\beta}(\alpha,n,\delta)}{(\log \beta)n}\quad\text{and}\quad\bar{h}^\beta(\alpha)=\lim_{\delta\to0}\limsup_{n\to\infty}\frac{\log\bar{h}^{\beta}(\alpha,n,\delta)}{(\log \beta)n};\]
		\item If $0\leq\alpha\leq\alpha^*(\beta)$, then $\bar{h}^\beta(\alpha)=h^\beta(\alpha)$; if $\alpha^*(\beta)<\alpha\leq\Lambda(\beta)$, then $\bar{h}^\beta(\alpha)=1$;
		\item If $0\leq\alpha<\alpha^*(\beta)$, then $\b{h}^\beta(\alpha)=1$; if $\alpha^*(\beta)\leq\alpha\leq\Lambda(\beta)$, then $\b{h}^\beta(\alpha)=h^\beta(\alpha)$.
	\end{enumerate}
\end{lem}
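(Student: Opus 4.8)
The plan is to first reduce the two-parameter definitions \eqref{definition b h} and \eqref{definition bar h} to their direct-$\beta$ forms, as asserted in item (1), and then to read off items (2) and (3) from a level-set decomposition of $\b{H}^\beta$ and $\bar{H}^\beta$ together with the shape of $h^\beta$ recorded in Lemma~\ref{lemma tan} and Corollary~\ref{corollary ast beta}. Throughout I would use the elementary set relations $H^\beta(\alpha,n,\delta)\subseteq\b{H}^\beta(\alpha,n,\delta)$ and $H^\beta(\alpha,n,\delta)\subseteq\bar{H}^\beta(\alpha,n,\delta)$, the fact that $\b{H}^\beta$ and $\bar{H}^\beta$ split into finitely many slices (at most $n\lceil\beta-1\rceil+1$) according to the exact value of the digit sum $\sum_{i=1}^n\vep_i$, and the total bound $\card\Sigma_\beta^n\leq\beta^{n+1}/(\beta-1)$ from Theorem~\ref{theorem renyi}, which forces every entropy value to be at most $1$.

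For item (1), one inequality is immediate: since $1<\beta_m<\beta$ gives $\Sigma_{\beta_m}^n\subseteq\Sigma_\beta^n$ by Theorem~\ref{theorem parry}(2), the same sum constraint yields $\b{h}^{\beta_m}(\alpha,n,\delta)\leq\b{h}^\beta(\alpha,n,\delta)$ and $\bar{h}^{\beta_m}(\alpha,n,\delta)\leq\bar{h}^\beta(\alpha,n,\delta)$; passing to $\liminf_n$ (resp.\ $\limsup_n$), then to $\lim_m$ and $\lim_\delta$, shows the $\beta_m$-defined quantities are dominated by the direct-$\beta$ ones. The reverse inequality is where the real work lies, and I would follow the approximation scheme behind Proposition~4.2 of \cite{TWWX}: given a $\beta$-admissible word counted by $\b{h}^\beta(\alpha,n,\delta)$, cut it into blocks and reinsert zero-blocks of bounded length, which by Lemma~\ref{lemma LW} and Lemma~\ref{lemma 0 m} renders each piece full and hence freely concatenatable inside $\Sigma_{\beta_m}$ for $m$ large. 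Only $o(n)$ zeros are inserted, so the digit sum changes by $o(n)$ and the one-sided constraints ``$>n(\alpha-\delta)$'' and ``$<n(\alpha+\delta)$'' survive after an arbitrarily small relaxation of $\delta$; letting $m\to\infty$ and then $\delta\to0$ recovers the direct-$\beta$ form. I expect this transfer from $\beta_m$ to $\beta$, with the digit sum kept under control, to be the main obstacle, although it is structurally identical to the two-sided argument already used for $h^\beta$ in \eqref{definiton h beta alpha}.

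Granting item (1), I would prove (2) and (3) with the base-$\beta$ counts directly. The lower bounds are inclusions: $H^\beta(\alpha,n,\delta)\subseteq\bar{H}^\beta(\alpha,n,\delta)$ gives $\bar{h}^\beta(\alpha)\geq h^\beta(\alpha)$, and for $\alpha>\alpha^\ast(\beta)$ the slice of words with digit sum near $n\alpha^\ast(\beta)$ lies inside $\bar{H}^\beta(\alpha,n,\delta)$, so $\bar{h}^\beta(\alpha)\geq h^\beta(\alpha^\ast(\beta))=1$ by Corollary~\ref{corollary ast beta}; the argument for $\b{h}^\beta$ is symmetric. For the upper bound I would cover $[0,\alpha+\delta)$ by finitely many windows $(\alpha_j'-\delta',\alpha_j'+\delta')$, bound $\bar{h}^\beta(\alpha,n,\delta)$ by the sum of the corresponding counts $h^\beta(\alpha_j',n,\delta')$, and note that the number $J$ of windows is independent of $n$, so that
\begin{align*}
\limsup_{n\to\infty}\frac{\log\bar{h}^\beta(\alpha,n,\delta)}{(\log\beta)n}\leq\max_{1\leq j\leq J}\,\limsup_{n\to\infty}\frac{\log h^\beta(\alpha_j',n,\delta')}{(\log\beta)n}.
\end{align*}
Letting $\delta'\to0$ and refining the grid, continuity of $h^\beta$ (Lemma~\ref{lemma tan}(2)) turns the right-hand side into $\sup_{\alpha'\in[0,\alpha]}h^\beta(\alpha')$, which by the monotonicity recorded just after Lemma~\ref{lemma tan} equals $h^\beta(\alpha)$ when $\alpha\leq\alpha^\ast(\beta)$ and equals $h^\beta(\alpha^\ast(\beta))=1$ when $\alpha>\alpha^\ast(\beta)$. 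The decreasing branch on $[\alpha^\ast(\beta),\Lambda(\beta)]$ gives the mirror-image conclusion for $\b{h}^\beta$. The one point demanding care is the interchange of the $\delta'\to0$ limit with the maximum and with the grid refinement, which I would handle by tying $\delta'$ to the grid spacing and invoking the uniform continuity of $h^\beta$ on the compact interval $I_\beta$.
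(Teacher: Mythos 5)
Your proposal is correct and follows essentially the same route as the paper: item (1) by adapting the block-and-zero-insertion approximation from Proposition 4.2 of \cite{TWWX}, and items (2)--(3) by sandwiching $\bar{h}^\beta(\alpha,n,\delta)$ (resp.\ $\b{h}^\beta(\alpha,n,\delta)$) between $h^\beta(\alpha,n,\delta)$ and an $n$-independent multiple of $O(1/\delta)$ window counts, then invoking the monotonicity/continuity of $h^\beta$ and Corollary~\ref{corollary ast beta}. The only cosmetic difference is that you take a maximum over windows and appeal to continuity of the limit function, whereas the paper's displayed inequality absorbs all windows into the single count at $\alpha$ via finite-$n$ monotonicity; both rest on the same decomposition.
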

\begin{proof}
	(1) It can be derived similar to the proof of Proposition 4.2 in \cite{TWWX}. 
	
	(2) For the first part, it is followed by (2) in Lemma \ref{lemma tan} and the following inequality \[h^\beta(\alpha,n,\delta)\leq\bar{h}^\beta(\alpha,n,\delta)\leq2\left\lceil\frac{\alpha+\delta}{2\delta}\right\rceil h^\beta(\alpha,n,\delta)\] 
	for sufficiently large $n$ and small $\delta$. The second part is followed by the relation $h^\beta(\alpha^*(\beta))\leq\bar{h}^\beta(\alpha)$ and Corollary~\ref{corollary ast beta}. 
	
	(3) It can be dealt with in a similar way as that of (2). 	
\end{proof}	

Now, we are ready to give the proof of Proposition \ref{prop E}.

\begin{proof}[Proof of Proposition \ref{prop E}]
  It suffices to prove the conclusion \eqref{bar E alpha}. Since it is clear that $\dim_H\bar{E}^\beta(\alpha)=1$ when $\alpha^\ast(\beta)<\alpha\leq\Lambda(\beta)$ according to the ergodic theory, we only need to prove that $\dim_H\bar{E}^\beta(\alpha)=h^\beta(\alpha)$ when $0\leq\alpha\leq\alpha^\ast(\beta)$ in the following.
  
  On the one hand, we have $E^\beta(\alpha)\subset\bar{E}^\beta(\alpha)$ for any $\alpha\in I_\beta$. It yields that
  \begin{align}\label{formula e 1}
    \dim_H\bar{E}^\beta(\alpha)\geq\dim_HE^\beta(\alpha)=h^\beta(\alpha)
  \end{align} 
  by Lemma~\ref{lemma e beta alpha}.	
	
  On the other hand, for any $\delta>0$, we have
  \[\bar{E}^\beta(\alpha)\subset\bigcap_{l=1}^\infty\bigcup_{n=l}^\infty\bigcup_{(\vep_1,\ldots,\vep_n)\in \bar{H}^\beta(\alpha,n,\delta)}I_n(\vep_1,\ldots,\vep_n),\]
  By (1) in Lemma~\ref{lemma relation}, for any $\eta>0$, there exists an integer $N$ such that 
  \[\bar{h}^\beta(\alpha,n,\delta)<\beta^{n\left(\bar{h}^\beta(\alpha)+\frac{\eta}{2}\right)},\quad\forall n>N.\]	
  Then, for any $l>N$, the $(\bar{h}^\beta(\alpha)+\eta)$-Hausdorff measure of $\bar{E}^\beta(\alpha)$ satisfies
  \[\mathbb{H}_{\beta^{-l}}^{\bar{h}^\beta(\alpha)+\eta}\big(\bar{E}^\beta(\alpha)\big)\leq\sum_{n=l}^{\infty}\bar{h}^\beta(\alpha,n,\delta)(\beta^{-n})^s<\sum_{n=l}^{\infty}(\beta^{-\frac{\eta}{2}})^n<\infty,\]	
  which implies that $\dim_H\bar{E}^\beta(\alpha)\leq\bar{h}^\beta(\alpha)+\eta$. Thus,
  \begin{align}\label{formula e 2}
  	\dim_H\bar{E}(\alpha)\leq\bar{h}^\beta(\alpha)=h^\beta(\alpha)
  \end{align}  
  by the arbitrariness of $\eta$ and (2) in Lemma~\ref{lemma relation}.	
	
  On combining \eqref{formula e 1} and \eqref{formula e 2}, it finishes the proof.	
\end{proof}

\section{Moran sets}
In this section, we first recall the structure and a dimensional result about homogeneous Moran sets, then introduce the definition and some properties about full Moran sets and $\alpha$-Moran sets.

\subsection{Homogeneous Moran sets}
Let $\delta>0$ and $\{N_k\}_{k\geq1}$ be a sequence of integers and $\{c_k\}_{k\geq1}$ be a sequence of positive numbers satisfying $N_k\geq2$, $0<c_k<1$, $k\geq1$; and  $N_1c_1\leq\delta$, $N_kc_k\leq1$, $k\geq2$. Put  
\[D_0=\{\emptyset\},\ D_k=\{(i_1,\ldots,i_k)\colon 1\leq i_j\leq N_j,1\leq j\leq k\}\ \text{for}\ k\geq1,\ D=\bigcup_{k\geq0}D_k.\]
Suppose that $J$ is an interval of length $\delta$. A collection $\mathcal{F}=\{J_\sigma\colon \sigma\in D\}$ of subintervals of $J$ is said to have \emph{homogeneous structure} if it satisfies 
\begin{enumerate}
	\item $J_{\emptyset}=J$;
	\item For any $\sigma\in D_{k-1}$ with $k\geq1$, $J_{\sigma\ast j}$, $1\leq j\leq N_k$, are subintervals of $J_\sigma$ and $\text{int}(J_{\sigma\ast i})\cap\text{int}(J_{\sigma\ast j})=\emptyset$ if $i\neq j$, where $\text{int}$ denotes the interior of some set;
	\item For any $\sigma\in D_{k-1}$ with $k\geq1$, we have
	\[\frac{|J_{\sigma\ast j}|}{J_\sigma}=c_j,\quad 1\leq j\leq N_k.\]
\end{enumerate} 
If the collection $\mathcal{F}$ is of homogeneous structure, then the set
\begin{align}\label{definition mj}
	\mathcal{M}=\mathcal{M}\big(J,\{N_k\}_{k\geq1},\{c_k\}_{k\geq1}\big)=\bigcap_{k\geq1}\bigcup_{\sigma\in D_k}J_\sigma
\end{align} 
is called a \emph{homogeneous Moran set} determined by $\mathcal{F}$.

Moreover, write
\[s=\liminf_{k\to\infty}\frac{\log (N_1N_2\cdots N_k)}{-\log(c_1c_2\cdots c_{k+1}N_{k+1})}.\]
Then we have
\begin{lem}[See Theorem 2.1 and Corollary 2.1 in \cite{FWW}]\label{lemma FWW}
	Let $\mathcal{M}$ be the homogeneous Moran set defined in~\eqref{definition mj}. Then we have $\dim_H\mathcal{M}\geq s$. 
    In addition, if $\inf_{k\geq1}c_k>0$, then $\dim_H\mathcal{M}=s$.
\end{lem}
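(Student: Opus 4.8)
The plan is to prove both inequalities with the canonical uniform measure on $\mathcal{M}$. Write $L_k=\delta c_1c_2\cdots c_k$ (with $L_0=\delta$) for the common length of a $k$-th level interval $J_\sigma$, $\sigma\in D_k$; by homogeneity all such intervals share this length, the $N_{k+1}$ children of each are pairwise disjoint of length $L_{k+1}=c_{k+1}L_k$, and the standing assumption $N_{k+1}c_{k+1}\le1$ reads $N_{k+1}L_{k+1}\le L_k$. Define a Borel probability measure $\mu$ supported on $\mathcal{M}$ by distributing mass uniformly, so that $\mu(J_\sigma)=(N_1N_2\cdots N_k)^{-1}$ for every $\sigma\in D_k$; this is consistent along the nested structure. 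Since $L_k$ decreases to $0$, for each small $r>0$ there is a unique $k$ with $L_{k+1}\le r<L_k$, and because $L_{k+1}<N_{k+1}L_{k+1}\le L_k$ this window splits at $N_{k+1}L_{k+1}$ into two regimes.

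For the lower bound $\dim_H\mathcal{M}\ge s$, I would invoke the mass distribution principle: it suffices to exhibit, for each $s'<s$, a constant $C$ with $\mu\big(B(x,r)\big)\le Cr^{s'}$ for all small $r$. By the definition of $s$, for $k$ large one has $N_1\cdots N_k>(c_1\cdots c_{k+1}N_{k+1})^{-s'}$, equivalently $(N_1\cdots N_k)^{-1}<(c_1\cdots c_{k+1}N_{k+1})^{s'}$, and throughout I use the identity $c_1\cdots c_{k+1}N_{k+1}=N_{k+1}L_{k+1}/\delta$. In the regime $N_{k+1}L_{k+1}\le r<L_k$, a ball of radius $r<L_k$ meets at most three $k$-th level intervals, each of mass $(N_1\cdots N_k)^{-1}$, so $\mu(B)\le3(N_1\cdots N_k)^{-1}<3(r/\delta)^{s'}$ using $N_{k+1}L_{k+1}\le r$. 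In the regime $L_{k+1}\le r<N_{k+1}L_{k+1}$, the disjoint $(k+1)$-th level intervals of length $L_{k+1}$ met by $B$ number at most $2r/L_{k+1}+2\le4r/L_{k+1}$, each of mass $(N_1\cdots N_{k+1})^{-1}$, whence
\[\mu(B)\le \frac{4r}{L_{k+1}}\,(N_1\cdots N_{k+1})^{-1}=4r\delta^{-1}(N_1\cdots N_k)^{-1}(c_1\cdots c_{k+1}N_{k+1})^{-1}<4r\delta^{-1}(c_1\cdots c_{k+1}N_{k+1})^{s'-1};\]
since $s'<1$ the exponent $s'-1$ is negative and $c_1\cdots c_{k+1}N_{k+1}>r/\delta$ here, giving $\mu(B)<4\delta^{-s'}r^{s'}$. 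Both regimes yield $\mu(B)\le Cr^{s'}$, so $\dim_H\mathcal{M}\ge s'$ for every $s'<s$, hence $\dim_H\mathcal{M}\ge s$.

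For the matching upper bound under $\inf_kc_k=:c_*>0$, I would build an economical covering exploiting that now $L_k/(N_{k+1}L_{k+1})=1/(N_{k+1}c_{k+1})\le1/(2c_*)$ is bounded. Thus the $N_{k+1}$ children contained in a single $k$-th level interval of length $L_k$ can be covered by at most $C_0:=\lceil1/(2c_*)\rceil$ intervals of length $\rho_k:=N_{k+1}L_{k+1}=\delta c_1\cdots c_{k+1}N_{k+1}$, so $\mathcal{M}$ is covered by at most $C_0(N_1\cdots N_k)$ intervals of diameter $\rho_k$ and $\mathcal{H}^t_{\rho_k}(\mathcal{M})\le C_0(N_1\cdots N_k)\rho_k^{t}$. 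Because $-\log\rho_k=-\log(c_1\cdots c_{k+1}N_{k+1})-\log\delta$ with the first term tending to $+\infty$, one has $\liminf_k\log(N_1\cdots N_k)/(-\log\rho_k)=s$; hence for any $t>s$ there is a subsequence along which $(N_1\cdots N_k)\rho_k^{t}$ stays bounded while $\rho_k\to0$, forcing $\mathcal{H}^t(\mathcal{M})<\infty$ and $\dim_H\mathcal{M}\le t$. Letting $t\downarrow s$ and combining with the lower bound gives $\dim_H\mathcal{M}=s$.

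The main obstacle is the intermediate-radius estimate in the lower bound: one must bound $\mu\big(B(x,r)\big)$ for $L_{k+1}\le r<L_k$ with no hypothesis whatsoever on the gaps between sibling intervals. The decisive point is that homogeneity forces every interval of a fixed level to have the same length $L_k$ and every sibling family to be disjoint, which is precisely what legitimizes the counting bounds ``at most three $k$-th level intervals'' and ``at most $4r/L_{k+1}$ of the $(k+1)$-th level intervals''; the split of the window at $N_{k+1}L_{k+1}$ is then exactly tuned so that the definition of $s$ converts each count into the desired power $r^{s'}$.
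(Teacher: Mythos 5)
Your proof is correct and complete in all essentials. Note that the paper itself offers no argument for this lemma---it is imported verbatim from Theorem 2.1 and Corollary 2.1 of \cite{FWW}---so there is no in-paper proof to compare against; what you have written is exactly the standard argument behind those results: the uniform mass distribution $\mu(J_\sigma)=(N_1\cdots N_k)^{-1}$ combined with the two-regime ball estimate split at $N_{k+1}L_{k+1}$ for the lower bound via the mass distribution principle, and, for the upper bound, the economical cover of each level-$k$ interval by $O\big(1/(N_{k+1}c_{k+1})\big)$ intervals of length $\rho_k=N_{k+1}L_{k+1}$, which is precisely where the hypothesis $\inf_k c_k>0$ enters. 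Two small points. First, you have (correctly) read condition (3) of the definition as saying that every level-$k$ interval has contraction ratio $c_k$; the displayed $c_j$ in the paper is a typo, and your reading is the one under which $L_k=\delta c_1\cdots c_k$ is well defined and the formula for $s$ is meaningful. Second, the step ``since $s'<1$'' in the intermediate regime deserves one line of justification: it follows from the standing hypotheses, because $N_1c_1\le\delta$ and $N_jc_j\le1$, $N_j\ge2$, give $c_1\cdots c_{k+1}N_{k+1}\le c_1\cdots c_k\to0$ and $N_1\cdots N_k\le\delta\,(c_1\cdots c_{k+1}N_{k+1})^{-1}$, whence $s\le1$ and any $s'<s$ is indeed less than $1$. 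With that line added, both bounds are watertight.
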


\subsection{Full Moran sets}
Let $\beta\in B_0$, $N\geq1$ be sufficiently large and $P$ be an integer satisfying $0\leq P\leq \Lambda(\beta) (N+M)$. Recall the integer $M$ being given in Lemma \ref{lemma 0 m}. Write
\[W^\beta(P,N+M):=\left\{(\vep_1,\vep_2,\ldots,\vep_N,0^M)\in \Sigma_\beta^{N+M}\colon \sum_{i=1}^N\vep_i=P\right\}.\]
Based on the set $W^\beta(P,N+M)$, define  
\begin{align*}
	&\mathcal {W}^\beta(P,N+M)\\
	&=\big\{x\in I\colon \left(\vep_{i(N+M)+1}(x,\beta),\ldots,\vep_{(i+1)(N+M)}(x,\beta)\right)\in W^\beta(P,N+M), i\geq0\big\}\\
	&=:W^\beta(P,N+M)^\infty.
\end{align*}
That is, $\mathcal {W}^\beta(P,N+M)$ is the set of numbers of which the digit sequences consist of the words in $W^\beta(P,N+M)$. 
Moreover, this definition is valid according to (3) in Lemma~\ref{lemma 31}, Corollary~\ref{corollary full word} and Lemma~\ref{lemma 0 m} (1). Also, in this paper we call $\mathcal {W}^\beta(P,N+M)$ a \emph{full Moran set} since each word $(\vep_1,\vep_2,\ldots,\vep_N,0^M)$ in $W^\beta(P,N+M)$ is full. The following lemma describes the size of $\mathcal {W}^\beta(P,N+M)$.  
\begin{lem}\label{lemma pnm} 
	Let $\beta\in B_0$ and $0\leq P\leq \Lambda(\beta)(N+M)$, $N\geq1$. Then
	\[\dim_H\mathcal {W}^\beta(P,N+M)=\frac{\log\card W^\beta(P,N+M)}{(\log \beta)(N+M)}.\]
\end{lem}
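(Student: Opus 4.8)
The plan is to exhibit $\mathcal{W}^\beta(P,N+M)$ as a homogeneous Moran set in the sense of Subsection 4.1 and then read off its dimension from Lemma~\ref{lemma FWW}. Set $Q=\card W^\beta(P,N+M)$. If $Q\le1$ the set is empty or reduces to the single periodic point $w_1^\infty$, so both sides of the claimed identity vanish and there is nothing to prove; hence I may assume $Q\ge2$ and enumerate $W^\beta(P,N+M)=\{w_1,\dots,w_Q\}$.

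First I would build the Moran family. For $\sigma=(i_1,\dots,i_k)$ with $1\le i_j\le Q$ put $J_\sigma=I_{k(N+M)}(w_{i_1}\ast\cdots\ast w_{i_k})$. Each $w_j$ ends in $0^M$, so by Lemma~\ref{lemma 0 m} it is a full word; by Lemma~\ref{lemma 31} any admissible word may be concatenated behind a full word, so every such concatenation $w_{i_1}\ast\cdots\ast w_{i_k}$ is admissible, and by Corollary~\ref{corollary full word} it is again full. Consequently Lemma~\ref{lemma 32}(1) gives $|J_{\sigma\ast j}|=\beta^{-(N+M)}|J_\sigma|$ for every $\sigma$ and every $1\le j\le Q$, while distinct words yield cylinders with disjoint interiors. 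Thus $\{J_\sigma\}$ has homogeneous structure with $J=I$, $N_k\equiv Q$ and $c_k\equiv\beta^{-(N+M)}$, and by unwinding the definitions $\mathcal{W}^\beta(P,N+M)=\bigcap_{k\ge1}\bigcup_{\sigma\in D_k}J_\sigma$ is precisely the associated homogeneous Moran set.

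Next I would verify the admissibility conditions for Lemma~\ref{lemma FWW}: $N_k=Q\ge2$ and $0<c_k=\beta^{-(N+M)}<1$ are immediate, while the $Q$ full cylinders $I_{N+M}(w_j)$ are pairwise disjoint subintervals of $[0,1)$ each of length $\beta^{-(N+M)}$, whence $N_kc_k=Q\beta^{-(N+M)}\le1$ (equivalently $Q\le\beta^{N+M}$). Since $\inf_{k\ge1}c_k=\beta^{-(N+M)}>0$, Lemma~\ref{lemma FWW} applies and gives $\dim_H\mathcal{W}^\beta(P,N+M)=s$, where
\[
s=\liminf_{k\to\infty}\frac{\log(N_1\cdots N_k)}{-\log(c_1\cdots c_{k+1}N_{k+1})}=\liminf_{k\to\infty}\frac{k\log Q}{(N+M)(k+1)\log\beta-\log Q}=\frac{\log Q}{(\log\beta)(N+M)},
\]
which is exactly $\dfrac{\log\card W^\beta(P,N+M)}{(\log\beta)(N+M)}$, as claimed.

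The only real content — and the step I expect to require the most care — is the construction of the homogeneous structure: that arbitrary concatenations of the $w_j$ remain admissible and, crucially, that the cylinder lengths multiply \emph{exactly} rather than merely obeying the general bound $|I_{N+M}(\cdot)|\le\beta^{-(N+M)}$ valid for all words. Both facts hinge on $\beta\in B_0$, which supplies the uniform constant $M$ of Lemma~\ref{lemma 0 m} and hence the fullness that makes Lemma~\ref{lemma 32}(1) and Corollary~\ref{corollary full word} applicable; without fullness the ratios $|J_{\sigma\ast j}|/|J_\sigma|$ would only be bounded above by $\beta^{-(N+M)}$, the family would fail to have exact homogeneous structure, and the clean Moran computation of $s$ would break down.
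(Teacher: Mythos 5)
Your argument is correct and follows the same route as the paper: recognize each word of $W^\beta(P,N+M)$ (ending in $0^M$, $\beta\in B_0$) as full, use Corollary~\ref{corollary full word} and Lemma~\ref{lemma 32}(1) to get exact multiplicativity of cylinder lengths, and then apply Lemma~\ref{lemma FWW} with $N_k\equiv\card W^\beta(P,N+M)$ and $c_k\equiv\beta^{-(N+M)}$ to compute $s$. Your write-up merely makes explicit a few checks the paper leaves implicit (the homogeneous-structure axioms, $N_kc_k\le1$, and the degenerate case $\card W^\beta(P,N+M)\le1$), so no further comment is needed.
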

\begin{proof}
	Since $(\vep_1,\vep_2,\ldots,\vep_N,0^M)$ is full, we have $|(\vep_1,\vep_2,\ldots,\vep_N,0^M)|=\beta^{-(N+M)}$. Moreover, for any $k\geq1$, by Corollary~\ref{corollary full word}, we have
	\[|(\vep_1,\vep_2,\ldots,\vep_N,0^M)^k|=\beta^{-k(N+M)}.\]
	This, together with Lemma \ref{lemma FWW}, leads to the conclusion
	\begin{align*}
		\dim_H\mathcal {W}^\beta(P,N+M)&=\liminf_{k\to\infty}\frac{k\log\card W^\beta(P,N+M)}{(\log \beta)(k+1)(N+M)-\log\card W^\beta(P,N+M)}\\
		&=\frac{\log\card W^\beta(P,N+M)}{(\log \beta)(N+M)},
	\end{align*}
which ends the proof.	
\end{proof}	

\begin{cor}\label{corollary wpnm}
	Let $\beta\in B_0$, $\alpha\in I_\beta$ and $N\geq1$. Then we have
	\begin{align}
		\dim_H\mathcal {W}^\beta( [\alpha(N+M)],N+M)=\frac{\log\card W^\beta( [\alpha(N+M)],N+M)}{(\log \beta)(N+M)}.
	\end{align}
\end{cor}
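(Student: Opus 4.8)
The plan is to obtain this corollary as an immediate specialization of Lemma~\ref{lemma pnm}. That lemma already computes $\dim_H\mathcal{W}^\beta(P,N+M)$ for \emph{every} integer $P$ lying in the range $0\le P\le\Lambda(\beta)(N+M)$, and gives precisely the ratio $\big(\log\card W^\beta(P,N+M)\big)/\big((\log\beta)(N+M)\big)$. So the entire task reduces to checking that the particular choice $P=[\alpha(N+M)]$ is an admissible value of $P$, i.e. that it satisfies the range hypothesis of Lemma~\ref{lemma pnm}; once this is verified, one simply substitutes $P=[\alpha(N+M)]$ into the conclusion of that lemma to recover the displayed formula.

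To carry out the verification, I would use the standing assumption $\alpha\in I_\beta=[0,\Lambda(\beta)]$. Multiplying through by the positive integer $N+M$ gives $0\le\alpha(N+M)\le\Lambda(\beta)(N+M)$. Since the floor function is non-negative on non-negative reals and never exceeds its argument, we obtain
\[
0\le[\alpha(N+M)]\le\alpha(N+M)\le\Lambda(\beta)(N+M).
\]
Hence $P=[\alpha(N+M)]$ meets the hypothesis $0\le P\le\Lambda(\beta)(N+M)$ of Lemma~\ref{lemma pnm}, and the conclusion follows at once.

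There is essentially no obstacle here: the only substantive point is the range membership of the floor, which is immediate from $\alpha\le\Lambda(\beta)$. All of the actual mathematical content resides in Lemma~\ref{lemma pnm}—and, behind it, in the homogeneous Moran set dimension formula of Lemma~\ref{lemma FWW} together with the fullness and concatenation properties recorded in Lemma~\ref{lemma 31}, Corollary~\ref{corollary full word} and Lemma~\ref{lemma 0 m}. The present corollary merely records the convenient evaluation at $P=[\alpha(N+M)]$, which is the form that will be invoked when constructing the lower-bound Moran set for $ER_\phi^\beta(\alpha)$ in the sequel.
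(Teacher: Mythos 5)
Your proposal is correct and matches the paper's own proof, which consists of the single line ``Take $P=[\alpha(N+M)]$ in Lemma~\ref{lemma pnm}''; your explicit verification that $0\le[\alpha(N+M)]\le\Lambda(\beta)(N+M)$ follows from $\alpha\in I_\beta$ is a harmless (and correct) addition that the paper leaves implicit.
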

\begin{proof}
	Take $P= [\alpha(N+M)]$ in Lemma \ref{lemma pnm}.
\end{proof}

Moreover, we can even obtain

\begin{lem}\label{lemma logcard}
	Let $\beta\in B_0$ and $\alpha\in I_\beta$. We have
	\[\lim_{N\to\infty}\dim_H\mathcal {W}^\beta( [\alpha(N+M)],N+M)=h^\beta(\alpha).\]
\end{lem}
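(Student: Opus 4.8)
The goal is to show that $\lim_{N\to\infty}\dim_H\mathcal{W}^\beta([\alpha(N+M)],N+M)=h^\beta(\alpha)$, where by Corollary~\ref{corollary wpnm} the dimension equals $\frac{\log\card W^\beta([\alpha(N+M)],N+M)}{(\log\beta)(N+M)}$. My plan is therefore to analyze the quantity $\card W^\beta([\alpha(N+M)],N+M)$ directly and to compare it to the counting function $h^\beta(\alpha,n,\delta)$ appearing in the definition \eqref{definiton h alpha} of the entropy function. The essential reduction is that $\card W^\beta(P,N+M)$ counts admissible $N$-words $(\vep_1,\ldots,\vep_N)$ with exact digit sum $P=[\alpha(N+M)]$ that can be legally followed by $0^M$; by Lemma~\ref{lemma 0 m} every admissible $N$-word admits such a continuation, so $\card W^\beta([\alpha(N+M)],N+M)$ is simply the number of admissible $N$-words whose digit sum equals $[\alpha(N+M)]$.

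First I would relate the \emph{exact}-sum count to the \emph{approximate}-sum count $h^\beta(\alpha,N,\delta)$, which counts admissible $N$-words with $N(\alpha-\delta)<\sum_i\vep_i<N(\alpha+\delta)$. Since $[\alpha(N+M)]=\alpha N+O(1)$, for any fixed $\delta>0$ and all large $N$ the value $P=[\alpha(N+M)]$ lies well inside the window $(N(\alpha-\delta),N(\alpha+\delta))$, giving immediately the upper bound $\card W^\beta(P,N+M)\le h^\beta(\alpha,N,\delta)$. For the matching lower bound, I would use a pigeonhole/averaging argument: the words counted by $h^\beta(\alpha,N,\delta)$ are distributed among at most $2\delta N+1$ possible digit-sum values, so at least one exact-sum value $P'$ is attained by a fraction $\ge 1/(2\delta N+1)$ of them. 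To transfer this from the generic $P'$ to our specific $P=[\alpha(N+M)]$, I would invoke the concavity/unimodality of the entropy function from Lemma~\ref{lemma tan}(2) — the count of words with digit sum $s$ is, up to subexponential factors, maximized near $s\approx\alpha^\ast N$ and is log-concave in $s$, so the exact-sum count at $P=[\alpha(N+M)]$ is at least a polynomial fraction of $h^\beta(\alpha,N,\delta)$. Taking logarithms, dividing by $(\log\beta)N$, and noting $(N+M)/N\to1$ shows that $\frac{\log\card W^\beta([\alpha(N+M)],N+M)}{(\log\beta)(N+M)}$ and $\frac{\log h^\beta(\alpha,N,\delta)}{(\log\beta)N}$ have the same $\liminf$ and $\limsup$ as $N\to\infty$, for every fixed $\delta$.

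Finally I would pass to the limit. Because $\beta\in B_0$ and, more to the point, the definition \eqref{definiton h alpha} of $h^\beta(\alpha)$ has equal $\liminf$ and $\limsup$ versions (justified via Proposition~4.2 of \cite{TWWX}, recorded here in Lemma~\ref{lemma tan}), the quantity $\frac{\log h^\beta(\alpha,N,\delta)}{(\log\beta)N}$ converges as $N\to\infty$ up to the $\delta\to0$ outer limit. Sending $\delta\to0$ after $N\to\infty$ collapses the two-sided bounds to $h^\beta(\alpha)$. One subtlety I would be careful about: the definition of $h^\beta(\alpha)$ in \eqref{definiton h alpha} is taken over the sub-base parameters $\beta_m$ rather than $\beta$ itself, so strictly I should either assume $\beta\in B_1$ (where \eqref{definiton h beta alpha} applies and $h^{\beta_m}$ can be replaced by $h^\beta$ directly) and then approximate, or invoke Lemma~\ref{lemma tan}(1) to equate the $\beta_m$-limits with the plain-$\beta$ counting; this interchange of the $m$-limit with the exact-count analysis is where I expect the only real friction.

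The main obstacle is the lower bound, specifically converting the approximate-sum count $h^\beta(\alpha,N,\delta)$ into the exact-sum count at the prescribed value $P=[\alpha(N+M)]$ without losing an exponential factor. The pigeonhole step alone yields an exact-sum count at \emph{some} value, and one must argue — via the log-concavity of digit-sum counts implied by the concavity in Lemma~\ref{lemma tan}(2), together with the fact that the window $(N(\alpha-\delta),N(\alpha+\delta))$ is symmetric about $\alpha N$ and that $P=[\alpha(N+M)]$ sits near its center — that the value $P$ itself carries a non-negligible (at most polynomially small) share of the total count. All the error terms introduced ($O(1)$ shifts in the exponent, the factor $2\delta N+1$, the ratio $(N+M)/N$) are subexponential and hence vanish after dividing by $(\log\beta)N$ and taking the iterated limits, so they do not affect the final value.
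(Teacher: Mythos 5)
Your overall strategy---reduce to the counting quantity via Corollary~\ref{corollary wpnm}, compare the exact-sum count with the window count $h^\beta(\alpha,N,\delta)$, and take the iterated limits $N\to\infty$ then $\delta\to0$---is the same as the paper's, and your handling of the upper bound and of the $\beta_m$-versus-$\beta$ issue is fine. But the lower bound contains a genuine gap. You claim that, because $P=[\alpha(N+M)]$ sits at the center of the window $(N(\alpha-\delta),N(\alpha+\delta))$ and the digit-sum counts are unimodal, the exact-sum count at $P$ is at least a \emph{polynomial} fraction of $h^\beta(\alpha,N,\delta)$. This is false for fixed $\delta$ when $0\le\alpha<\alpha^\ast(\beta)$: the exact-sum counts are \emph{increasing} in the sum throughout the window (the mode $\alpha^\ast(\beta)N$ lies to the right of it), so the pigeonhole maximum is attained near the top endpoint, at sum $\approx(\alpha+\delta)N$, not at the center. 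The ratio between the window count and the exact count at $\alpha N$ is of order $\beta^{N(h^\beta(\alpha+\delta)-h^\beta(\alpha)+o(1))}$, which is exponentially large in $N$ for each fixed $\delta>0$ (already visible for $\beta=2$, where $\sum_{k}\binom{N}{k}$ over the window is dominated by $\binom{N}{(\alpha+\delta)N}\gg\binom{N}{\alpha N}$). So your list of ``subexponential'' error terms omits the one term that is actually exponential, and unimodality points the wrong way for your centering argument.

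The conclusion is still reachable, but one must quantify how the exact count at sum $[(\alpha+\delta)(N+M)]$ compares to the exact count at $[\alpha(N+M)]$ and let the resulting exponential loss die only in the outer limit $\delta\to0$. The paper does this with the combinatorial inequality
\begin{align*}
w_M^\beta(\alpha+\delta,N)\leq\frac{(N+M)^{(N+M)\delta}}{((N+M)\delta-1)!}\,w_M^\beta(\alpha,N),
\end{align*}
whose Stirling estimate contributes $\delta\log(e/\delta)/\log\beta$ to the normalized exponent; this vanishes as $\delta\to0$, and the window count is sandwiched between $w_M^\beta(\alpha+\delta,N)$ and $([2\delta(N+M)]+1)\,w_M^\beta(\alpha+\delta,N)$ using the monotonicity of the exact counts below $\alpha^\ast(\beta)$. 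You need either this inequality or an equivalent appeal to the continuity of $h^\beta$ at $\alpha$ to close the exponential gap; as written, your ``polynomial fraction'' step does not.
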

\begin{proof}
	According to Corollary \ref{corollary wpnm}, we first show that
	\begin{align}\label{equality lemma logcard}
		\liminf_{N\to\infty}\frac{\log\card W^\beta([\alpha(N+M)],N+M)}{(\log \beta)(N+M)}=h^\beta(\alpha).
	\end{align}
	The proof is divided into three cases: $0\leq\alpha<\alpha^*(\beta)$, $\alpha=\alpha^*(\beta)$ and $\alpha^*(\beta)<\alpha\leq \Lambda(\beta)$. Here, we give only the proof of the first case. The other two cases can be dealt with in a similar way.
	
	Define a function with two variables $\alpha$ and $N$:
	\begin{align}
		w_M^\beta(\alpha,N)=\card W^\beta([\alpha(N+M)],N+M).
	\end{align}
	Then, by Lemma~\ref{lemma tan} we know that for each $\alpha$, $w_M^\beta(\alpha,N)$ is increasing with respect to $N$; for each $N$, $w_M^\beta(\alpha,N)$ is constant on $[(k-1)/(N+M),k/(N+M))$ where $1\leq k\leq \Lambda(\beta)(N+M)$; and, in addition, for each $N$, $w_M^\beta(\alpha,N)$ is increasing on $[0,\alpha^\ast(\beta)+1/(N+M))$ and decreasing on $[\alpha^\ast(\beta)+1/(N+M),\Lambda(\beta)]$ with respect to $\alpha$.
	
	Take $\delta>0$ such that $\alpha+\delta<\alpha^*(\beta)$. Since 
	\[w_M^\beta(\alpha+\delta,N)\leq\frac{(N+M)^{(N+M)\delta}}{((N+M)\delta-1)!}w_M^\beta(\alpha,N),\]
	by the Stirling's approximation we have 
	\begin{align*}
		\begin{split}
			\liminf_{N\to\infty}\frac{\log w_M^\beta(\alpha+\delta,N)}{(\log \beta)(N+M)} &\leq\liminf_{N\to\infty}\frac{\log\frac{(N+M)^{(N+M)\delta}}{(N+M)(\delta-1)!}}{(\log\beta)(N+M)}+\liminf_{N\to\infty}\frac{\log w_M^\beta(\alpha,N)}{(\log\beta)(N+M)}\\
			&=\frac{\delta\log\frac{e}{\delta}}{\log \beta}+\liminf_{N\to\infty}\frac{\log w_M^\beta(\alpha,N)}{(\log \beta)(N+M)}. 
		\end{split}
	\end{align*}	
	Let $\delta\to0$ in both sides and by the fact $\lim_{\delta\to0}(\delta\log\frac{e}{\delta})/(\log\beta)=0$, it yields that
	\begin{align}\label{inequality log halpha}
		\lim_{\delta\to0}\liminf_{N\to\infty}\frac{\log w_M^\beta(\alpha+\delta,N)}{(\log\beta)(N+M)}\leq\liminf_{N\to\infty}\frac{\log w_M^\beta(\alpha,N)}{(\log\beta)(N+M)}.
	\end{align}
	Moreover, we have
	\begin{align*}
			w_M^\beta(\alpha+\delta,N)\leq h^\beta(\alpha,N+M,\delta)
			\leq([2\delta(N+M)]+1) w_M^\beta(\alpha+\delta,N).  
	\end{align*}
	It follows that
	\[\lim_{\delta\to0}\liminf_{N\to\infty}\frac{\log w_M^\beta(\alpha+\delta,N)}{(\log\beta)(N+M)}=\lim_{\delta\to0}\liminf_{N\to\infty}\frac{\log h^\beta(\alpha,N+M,\delta)}{(\log\beta)(N+M)}=h^\beta(\alpha).\]	
	This, together with \eqref{inequality log halpha}, implies
	\[h^\beta(\alpha)\leq\liminf_{N\to\infty}\frac{\log w_M^\beta(\alpha,N)}{(\log \beta)(N+M)}.\]
	On the other hand, the inequality for the other direction is apparently true since
	\[\liminf_{N\to\infty}\frac{\log w_M^\beta(\alpha,N)}{(\log \beta)(N+M)}\leq\liminf_{N\to\infty}\frac{\log w_M^\beta(\alpha+\delta,N)}{(\log\beta)(N+M)}\]
	by the properties of $w_M^\beta(\alpha,N)$. So, the equality \eqref{equality lemma logcard} is established.
	
	Since the equality 
	\[\limsup_{N\to\infty}\frac{\log w_M^\beta(\alpha,N)}{(\log \beta)(N+M)}=h^\beta(\alpha)\]
	can be proved just as the above discussion, the proof is ended here.
\end{proof}	

Take $P=[\alpha(N+M)]+1$ in Lemma~\ref{lemma pnm}, we may also obtain
\begin{cor}\label{formula nm1}
	Let $\beta\in B_0$ and $\alpha\in I_\beta$. Then   \[\lim_{N\to\infty}\dim_H\mathcal {W}^\beta([\alpha(N+M)]+1,N+M)=h^\beta(\alpha).\]
\end{cor}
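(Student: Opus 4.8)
The plan is to reduce everything to Lemma~\ref{lemma logcard}, since replacing $P=[\alpha(N+M)]$ by $P=[\alpha(N+M)]+1$ only perturbs the prescribed digit sum by a single unit, i.e.\ perturbs the density by $1/(N+M)\to0$. First I would apply Lemma~\ref{lemma pnm} with $P=[\alpha(N+M)]+1$ to rewrite the quantity of interest as
\[\dim_H\mathcal{W}^\beta([\alpha(N+M)]+1,N+M)=\frac{\log\card W^\beta([\alpha(N+M)]+1,N+M)}{(\log\beta)(N+M)},\]
so that the corollary becomes a statement about the growth rate of $\card W^\beta([\alpha(N+M)]+1,N+M)$. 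The key observation is that, setting $\alpha_N:=([\alpha(N+M)]+1)/(N+M)$, the integer $[\alpha(N+M)]+1$ equals $[\alpha_N(N+M)]$ exactly, whence $\card W^\beta([\alpha(N+M)]+1,N+M)=w_M^\beta(\alpha_N,N)$ in the notation of Lemma~\ref{lemma logcard}. Moreover, since $[\alpha(N+M)]>\alpha(N+M)-1$ and $[\alpha(N+M)]\le\alpha(N+M)$, one has $\alpha<\alpha_N\le\alpha+1/(N+M)$, so $\alpha_N\to\alpha$ as $N\to\infty$.

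Then I would exploit the monotonicity properties of $w_M^\beta(\cdot,N)$ recorded in the proof of Lemma~\ref{lemma logcard}: for each $N$ the function is increasing on $[0,\alpha^\ast(\beta)+1/(N+M))$ and decreasing on $[\alpha^\ast(\beta)+1/(N+M),\Lambda(\beta)]$. Treating first the case $0\le\alpha<\alpha^\ast(\beta)$, fix $\delta>0$ small enough that $\alpha+\delta<\alpha^\ast(\beta)$; then for all large $N$ the three points $\alpha<\alpha_N\le\alpha+\delta$ all lie in the increasing region, whence
\[w_M^\beta(\alpha,N)\le w_M^\beta(\alpha_N,N)\le w_M^\beta(\alpha+\delta,N).\]
Taking $\liminf_{N\to\infty}$ of the logarithmic normalizations of the left inequality gives the lower bound $h^\beta(\alpha)$ by \eqref{equality lemma logcard}; taking $\limsup_{N\to\infty}$ of the right inequality, using that $\limsup_{N\to\infty}(\log w_M^\beta(\alpha+\delta,N))/((\log\beta)(N+M))=h^\beta(\alpha+\delta)$ (the $\limsup$ version established at the end of the proof of Lemma~\ref{lemma logcard}), and then letting $\delta\to0$ with the continuity of $h^\beta$ from Lemma~\ref{lemma tan}(2), gives the matching upper bound $h^\beta(\alpha)$. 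Hence the lower and upper limits of the normalized logarithm coincide and the full limit equals $h^\beta(\alpha)$.

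The remaining cases $\alpha^\ast(\beta)<\alpha\le\Lambda(\beta)$ and $\alpha=\alpha^\ast(\beta)$ are handled symmetrically: in the decreasing region one reverses the chain of inequalities, sandwiching $w_M^\beta(\alpha_N,N)$ between $w_M^\beta(\alpha+\delta,N)$ and $w_M^\beta(\alpha,N)$, while at the critical point $\alpha^\ast(\beta)$ one combines the one-sided monotonicities with the continuity of $h^\beta$. I do not anticipate any real obstacle; the only point requiring care is that $\alpha_N$ is a \emph{moving} argument, so Lemma~\ref{lemma logcard} cannot be applied to it directly. The entire purpose of the monotonicity sandwich is precisely to trap $w_M^\beta(\alpha_N,N)$ between evaluations at the \emph{fixed} densities $\alpha$ and $\alpha+\delta$, to which the fixed-$\alpha$ conclusions of Lemma~\ref{lemma logcard} do apply.
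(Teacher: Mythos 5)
Your proposal is correct and follows essentially the same route as the paper: rewrite the parameter $[\alpha(N+M)]+1$ as $\big[\big(\alpha+\frac{1}{N+M}\big)(N+M)\big]$, reduce to Lemma~\ref{lemma logcard}, and conclude by the continuity of $h^\beta$ from Lemma~\ref{lemma tan}(2). You are in fact more careful than the paper, whose proof applies Lemma~\ref{lemma logcard} directly to the moving argument $\alpha+\frac{1}{N+M}$; your monotonicity sandwich trapping $w_M^\beta(\alpha_N,N)$ between evaluations at the fixed densities $\alpha$ and $\alpha+\delta$ is precisely the extra step needed to justify that diagonal limit.
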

\begin{proof}
	By lemma~\ref{lemma logcard}, we have
	\begin{align*}
		&\lim_{N\to\infty}\dim_H\mathcal {W}^\beta([\alpha(N+M)]+1,N+M)\\
		&=\lim_{N\to\infty}\dim_H\mathcal {W}^\beta\Big(\Big[\big(\alpha+\frac{1}{N+M}\big)(N+M)\Big],N+M\Big)\\
		&=\lim_{N\to\infty}h^\beta\Big(\alpha+\frac{1}{N+M}\Big)=h^\beta(\alpha)
	\end{align*}
    The last equality is followed by the continuity of $h^\beta(\alpha)$ in (2) of Lemma~\ref{lemma tan}.
\end{proof} 

More generally, let $\beta\in B_0$, $N\geq1$ be large enough and take two integers $P$ and $Q$ satisfying $0\leq P\leq Q\leq \Lambda(\beta) (N+M)$. Write
\[W^\beta([P,Q],N+M):=\left\{(\vep_1,\vep_2,\ldots,\vep_N,0^M)\in \Sigma_\beta^{N+M}\colon P\leq\sum_{i=1}^N\vep_i\leq Q\right\}.\]
Based on the set $W^\beta([P,Q],N+M)$, define the full Moran set 
\begin{align*}
	&\mathcal {W}^\beta([P,Q],N+M)\\
	&=\big\{x\in I\colon \left(\vep_{i(N+M)+1}(x,\beta),\ldots,\vep_{(i+1)(N+M)}(x,\beta)\right)\in W^\beta([P,Q],N+M), i\geq0\big\}\\
	&=:W^\beta([P,Q],N+M)^\infty.
\end{align*}
Then, by an analogue discussion to that of Lemma~\ref{lemma pnm} we may obtain
\begin{lem}\label{lemma pqnm} 
	Let $\beta\in B_0$ and the two integers $P$ and $Q$ satisfy $0\leq P\leq Q\leq\Lambda(\beta) (N+M)$, $N\geq1$. Then
	\begin{align}\label{formula pqnm1}
		\dim_H\mathcal {W}^\beta([P,Q],N+M)=\frac{\log\card W^\beta([P,Q],N+M)}{(\log \beta)(N+M)}.
	\end{align} 
\end{lem}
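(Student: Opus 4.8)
The plan is to recognize $\mathcal{W}^\beta([P,Q],N+M)$ as a homogeneous Moran set and then invoke Lemma~\ref{lemma FWW}, exactly mirroring the proof of Lemma~\ref{lemma pnm}. Write $\mathcal{N}=\card W^\beta([P,Q],N+M)$. The first thing I would check is that every word $(\vep_1,\ldots,\vep_N,0^M)$ in $W^\beta([P,Q],N+M)$ is full: this is immediate from Lemma~\ref{lemma 0 m}, since the suffix $0^M$ forces fullness irrespective of the value of $\sum_{i=1}^N\vep_i$. Consequently each such word determines a cylinder of length exactly $\beta^{-(N+M)}$, and by Corollary~\ref{corollary full word} the concatenation of any $k$ of these full words is again full, so that the level-$k$ cylinders arising in the construction of $\mathcal{W}^\beta([P,Q],N+M)$ all have length precisely $\beta^{-k(N+M)}$. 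The admissibility of arbitrary concatenations is guaranteed by (3) of Lemma~\ref{lemma 31} together with Lemma~\ref{lemma 0 m}(1), which is what makes the defining set well posed.

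Next I would set up the homogeneous structure explicitly: take $J=I$, and at every level $k$ put $N_k=\mathcal{N}$ and contraction ratio $c_k=\beta^{-(N+M)}$. Because the $\mathcal{N}$ cylinders of a given generation are disjoint subintervals of the unit interval, the admissibility constraints $N_1c_1\leq1$ and $N_kc_k\leq1$ hold automatically, so $\mathcal{W}^\beta([P,Q],N+M)$ genuinely fits the homogeneous Moran framework. Since $\inf_{k\geq1}c_k=\beta^{-(N+M)}>0$, Lemma~\ref{lemma FWW} applies and yields $\dim_H\mathcal{W}^\beta([P,Q],N+M)=s$.

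Finally I would compute $s$. With $N_k\equiv\mathcal{N}$ and $c_k\equiv\beta^{-(N+M)}$ one has $N_1\cdots N_k=\mathcal{N}^k$ and $c_1\cdots c_{k+1}N_{k+1}=\mathcal{N}\,\beta^{-(k+1)(N+M)}$, whence
\[s=\liminf_{k\to\infty}\frac{k\log\mathcal{N}}{(k+1)(N+M)\log\beta-\log\mathcal{N}}=\frac{\log\mathcal{N}}{(\log\beta)(N+M)},\]
which is the claimed formula~\eqref{formula pqnm1}.

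The Moran bookkeeping and this computation are entirely routine; the only point deserving care is the degenerate case, and I expect it, rather than any substantive difficulty, to be the sole obstacle. The homogeneous Moran apparatus presupposes $N_k\geq2$, so I would dispose of $\mathcal{N}\leq1$ separately: if $\mathcal{N}=0$ the set is empty, and if $\mathcal{N}=1$ the set is a single point of dimension $0$, both in agreement with the right-hand side of~\eqref{formula pqnm1} (which equals $0$ when $\mathcal{N}=1$). The passage from a single prescribed digit sum $P$ to a prescribed range $[P,Q]$ changes nothing in the geometry of the construction; it merely enlarges the constant number $\mathcal{N}$ of admissible full building blocks, so the argument for Lemma~\ref{lemma pnm} transfers verbatim.
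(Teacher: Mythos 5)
Your proof is correct and follows essentially the same route as the paper: the paper proves Lemma~\ref{lemma pqnm} simply by declaring it analogous to Lemma~\ref{lemma pnm}, whose proof is exactly your argument (full words of length $N+M$, Corollary~\ref{corollary full word} for concatenations, Lemma~\ref{lemma FWW} with $N_k\equiv\card W^\beta([P,Q],N+M)$ and $c_k\equiv\beta^{-(N+M)}$, and the same computation of $s$). Your explicit treatment of the degenerate cases $\card W^\beta([P,Q],N+M)\leq1$ is a small point the paper leaves implicit.
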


As a matter of fact, for the above formula \ref{formula pqnm1}, $P$ and $Q$ are not necessarily integers. In a more detail, define similarly
\begin{align}
	\mathcal {W}^\beta((P,Q),N+M):=W^\beta((P,Q),N+M)^\infty
\end{align}
where
\[W^\beta((P,Q),N+M):=\left\{(\vep_1,\vep_2,\ldots,\vep_N,0^M)\in \Sigma_\beta^{N+M}\colon P<\sum_{i=1}^N\vep_i<Q\right\},\]
then we also have
\begin{lem}\label{formula pqnm2}
	Let $\beta\in B_0$ and two numbers $P$ and $Q$ satisfy $0\leq P< Q\leq\Lambda(\beta) (N+M)$, $N\geq1$. Then
	\[\dim_H\mathcal {W}^\beta((P,Q),N+M)=\frac{\log\card W^\beta((P,Q),N+M)}{(\log \beta)(N+M)}.\]
\end{lem}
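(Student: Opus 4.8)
The plan is to reduce Lemma~\ref{formula pqnm2} to the already-established Lemma~\ref{lemma pqnm} by squeezing the open-interval condition $P<\sum_{i=1}^N\vep_i<Q$ between two closed-interval conditions and invoking the continuity result for $h^\beta$. First I would observe that since each digit sum $\sum_{i=1}^N\vep_i$ is a non-negative integer, the open condition $P<\sum\vep_i<Q$ is equivalent to the closed condition $\lceil P\rceil\leq\sum\vep_i\leq\lceil Q\rceil-1$ when $P$ and $Q$ are real (and strictly equivalent to the integer-endpoint cases covered by Lemma~\ref{lemma pqnm}). Concretely, writing $P'=\lfloor P\rfloor+1$ and $Q'=\lceil Q\rceil-1$ (adjusting by one depending on whether $P,Q$ themselves are integers), I would have the exact set identity
\[
W^\beta((P,Q),N+M)=W^\beta([P',Q'],N+M),
\]
so that $\card W^\beta((P,Q),N+M)=\card W^\beta([P',Q'],N+M)$.

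From this identity the conclusion is immediate: applying Lemma~\ref{lemma pqnm} to the integers $P'$ and $Q'$ (which satisfy $0\leq P'\leq Q'\leq\Lambda(\beta)(N+M)$ provided the interval $(P,Q)$ contains at least one integer) gives
\[
\dim_H\mathcal{W}^\beta((P,Q),N+M)=\dim_H\mathcal{W}^\beta([P',Q'],N+M)=\frac{\log\card W^\beta([P',Q'],N+M)}{(\log\beta)(N+M)},
\]
and replacing $\card W^\beta([P',Q'],N+M)$ by the equal quantity $\card W^\beta((P,Q),N+M)$ yields the desired formula. The point is that the Hausdorff dimension of the full Moran set depends on the word-set $W^\beta$ only through its cardinality, and the open and closed interval conditions cut out exactly the same collection of admissible words once the integrality of the digit sums is taken into account.

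The one step requiring care, which I expect to be the main (though minor) obstacle, is the boundary bookkeeping when no integer lies strictly between $P$ and $Q$, i.e.\ when $W^\beta((P,Q),N+M)$ is empty. In that degenerate case the full Moran set is empty and both sides of the claimed equality should be interpreted as $0$ (with the convention $\log\card\emptyset$ read as giving dimension $0$); I would dispose of this case separately at the outset, noting that $N$ is taken large enough that the generic case $P'\le Q'$ holds. I would also remark explicitly that the validity of the construction of $\mathcal{W}^\beta((P,Q),N+M)$ as a well-defined full Moran set follows, exactly as for $\mathcal{W}^\beta(P,N+M)$, from (3) in Lemma~\ref{lemma 31}, Corollary~\ref{corollary full word}, and Lemma~\ref{lemma 0 m}(1), since every word in $W^\beta((P,Q),N+M)$ ends in the block $0^M$ and is therefore full.
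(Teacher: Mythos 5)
Your proof is correct. The paper itself gives no explicit argument for this lemma: it is asserted as holding ``by an analogue discussion'' to Lemma~\ref{lemma pqnm}, which in turn is an analogue of the direct Moran-set computation in Lemma~\ref{lemma pnm} (every word in the generating set is full, so level-$k$ cylinders have length exactly $\beta^{-k(N+M)}$, and Lemma~\ref{lemma FWW} yields the stated ratio). You take a genuinely different and in fact tidier route: instead of re-running the Moran-set computation for the open-interval condition, you observe that since every digit sum is a non-negative integer, $W^\beta((P,Q),N+M)$ coincides \emph{as a set} with $W^\beta([P',Q'],N+M)$ for $P'=\lfloor P\rfloor+1$ and $Q'=\lceil Q\rceil-1$, so the two full Moran sets are literally identical and the result is an immediate corollary of Lemma~\ref{lemma pqnm}. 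What your reduction buys is that no new estimate is needed at all; what the paper's (implicit) direct route buys is uniformity of exposition across Lemmas~\ref{lemma pnm}, \ref{lemma pqnm} and \ref{formula pqnm2}. Your handling of the degenerate case where $(P,Q)$ contains no integer, and your remark that well-definedness of the Moran set follows from Lemma~\ref{lemma 31}, Corollary~\ref{corollary full word} and Lemma~\ref{lemma 0 m}, are both appropriate; the only residual pedantry (which the paper also glosses over) is that Lemma~\ref{lemma FWW} formally requires at least two words at each level, so the case $\card W^\beta((P,Q),N+M)=1$ should strictly be noted as giving a single point of dimension $0$, consistent with the formula.
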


\subsection{$\alpha$-Moran sets}
In this subsection, we will construct a set $\mathcal{W}_\infty^\beta(\alpha,N+M)$, called $\alpha$-Moran set, where $0\leq\alpha<\Lambda(\beta)$, $\beta\in B_0$ and $N>1$. It can be used to construct a suitable Moran subset to obtain the lower bound of Hausdorff dimension of $ER_\phi^\beta(\alpha)$. To this end, we first construct recursively two sequences of sets of digit words $\{W_n^\beta(\alpha,N+M)\}_{n=1}^\infty$ and $\{V_n^\beta(\alpha,N+M)\}_{n=1}^\infty$. Take $N$ to be sufficiently large such that 
\[[\alpha (N+M)]+1<\Lambda(\beta)(N+M).\]
Put
\[W_1^\beta(\alpha,N+M)=\left\{(\vep_1,\ldots,\vep_N,0^M)\in\Sigma_\beta^{N+M}\colon\sum_{i=1}^{N}\vep_i=[\alpha(N+M)]\right\}\]
and
\[V_1^\beta(\alpha,N+M)=\left\{(\vep_1,\ldots,\vep_N,0^M)\in\Sigma_\beta^{N+M}\colon\sum_{i=1}^{N}\vep_i=[\alpha(N+M)]+1\right\}.\]
Suppose that the sets $W_i^\beta(\alpha,N+M)$ and $V_i^\beta(\alpha,N+M)$ are well defined for all $1\leq i\leq n$, then define 
\begin{align*}
	\begin{split}
		W&_{n+1}^\beta(\alpha,N+M)=\big\{(\vep_1,\vep_2,\ldots,\vep_{2^n(N+M)})\in W\big([\alpha 2^n(N+M)],2^n(N+M)\big)\colon\\
		&(\vep_{2^{n-1}(N+M)i+1},\ldots,\vep_{2^{n-1}(N+M)(i+1)})\in W_n^\beta(\alpha,N+M)\cup V_n^\beta(\alpha,N+M),i=0,1\big\}
	\end{split}
\end{align*}
and
\begin{align*}
	\begin{split}
		V&_{n+1}^\beta(\alpha,N+M)=\big\{(\vep_1,\vep_2,\ldots,\vep_{2^n(N+M)})\in W\big([\alpha 2^n(N+M)]+1,2^n(N+M)\big)\colon\\
		&(\vep_{2^{n-1}(N+M)i+1},\ldots,\vep_{2^{n-1}(N+M)(i+1)})\in W_n^\beta(\alpha,N+M)\cup V_n^\beta(\alpha,N+M),i=0,1\big\}.
	\end{split}
\end{align*}
The above definitions are valid since the estimation
\begin{equation}\label{cc5}
2[\alpha2^n(N+M)]<[\alpha2^{n+1}(N+M)]+1\leq 2\big([\alpha2^n(N+M)]+1\big)
\end{equation}
holds for all $n\geq0$.

With this construction, we know that for each $n\geq1$, every word in $W_n^\beta(\alpha,N+M)$ is of length $2^{n-1}(N+M)$ and the sum of digits is $[\alpha2^{n-1}(N+M)]$. Likewise, every word in $V_n^\beta(\alpha,N+M)$ is of length $2^{n-1}(N+M)$ and the sum of digits is $[\alpha2^{n-1}(N+M)]+1$. 
Moreover, we may even obtain

\begin{lem}\label{remark decompose}
	Let $\beta\in B_0$ and $0\leq\alpha<\Lambda(\beta)$. For any $0\leq i\leq n-1$, we can decompose uniquely each word in $W_n^\beta(\alpha,N+M)$ or $V_n^\beta(\alpha,N+M)$ into successive concatenations of digit words of length $2^i(N+M)$, the sum of digits in each one is either $[\alpha 2^i(N+M)]$ or $[\alpha 2^i(N+M)]+1$.
\end{lem}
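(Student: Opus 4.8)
The plan is to prove the statement by a direct induction built on the recursive definition of $W_n^\beta(\alpha,N+M)$ and $V_n^\beta(\alpha,N+M)$. First I would dispose of the uniqueness assertion, which is essentially free: a word of length $2^{n-1}(N+M)$ can be cut into consecutive blocks of length $2^i(N+M)$ in exactly one way, since the cut points are forced to be the multiples of $2^i(N+M)$ and $2^i(N+M)$ divides $2^{n-1}(N+M)$ because $i\le n-1$. The genuine content is therefore the claim about the digit sums, and for this it suffices to show that each block produced by the cutting is itself a word of $W_{i+1}^\beta(\alpha,N+M)\cup V_{i+1}^\beta(\alpha,N+M)$. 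Indeed, as recorded in the paragraph preceding the statement, every word in $W_{i+1}^\beta(\alpha,N+M)$ has length $2^i(N+M)$ and digit sum $[\alpha 2^i(N+M)]$, while every word in $V_{i+1}^\beta(\alpha,N+M)$ has the same length and digit sum $[\alpha 2^i(N+M)]+1$, which are exactly the two admissible values in the conclusion.

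Fixing $i\ge 0$, I would run the induction on $n\ge i+1$ with the sharpened claim that every word in $W_n^\beta(\alpha,N+M)\cup V_n^\beta(\alpha,N+M)$ is the concatenation of exactly $2^{\,n-1-i}$ words drawn from $W_{i+1}^\beta(\alpha,N+M)\cup V_{i+1}^\beta(\alpha,N+M)$. The base case $n=i+1$ is immediate, for such a word has length $2^i(N+M)$ and already lies in $W_{i+1}^\beta(\alpha,N+M)\cup V_{i+1}^\beta(\alpha,N+M)$, so it is a single block. For the inductive step, take $w\in W_{n+1}^\beta(\alpha,N+M)\cup V_{n+1}^\beta(\alpha,N+M)$. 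By the very definition of these two sets, $w=w^{(0)}\ast w^{(1)}$ with $w^{(0)},w^{(1)}\in W_n^\beta(\alpha,N+M)\cup V_n^\beta(\alpha,N+M)$, each of length $2^{\,n-1}(N+M)$. Applying the inductive hypothesis separately to $w^{(0)}$ and to $w^{(1)}$ and concatenating the two resulting block decompositions exhibits $w$ as a concatenation of $2\cdot 2^{\,n-1-i}=2^{\,n-i}$ words from $W_{i+1}^\beta(\alpha,N+M)\cup V_{i+1}^\beta(\alpha,N+M)$, closing the induction. Reading off the digit sum of each constituent block then yields precisely $[\alpha 2^i(N+M)]$ or $[\alpha 2^i(N+M)]+1$, which is the assertion of the lemma.

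I do not expect a genuine obstacle here: the argument is a bookkeeping induction on the block length, and the single point demanding care is the consistency of the digit sums under repeated halving, namely that a word with the digit sum attached to $W_{n+1}^\beta$ or $V_{n+1}^\beta$ really can be split into two halves whose digit sums fall into $\{[\alpha 2^{\,n-1}(N+M)],[\alpha 2^{\,n-1}(N+M)]+1\}$. This is exactly what the estimate \eqref{cc5} guarantees, and it has already been used to make the construction of $W_{n+1}^\beta(\alpha,N+M)$ and $V_{n+1}^\beta(\alpha,N+M)$ valid; with the length and digit-sum bookkeeping for $W_{i+1}^\beta$ and $V_{i+1}^\beta$ in hand, the decomposition itself requires nothing more than matching lengths and indices.
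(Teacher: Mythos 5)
Your proof is correct, and it is essentially the argument the paper intends: the paper states this lemma without proof, treating it as an immediate consequence of the recursive construction of $W_n^\beta(\alpha,N+M)$ and $V_n^\beta(\alpha,N+M)$, and your induction on $n$ for fixed $i$ (splitting each word into its two halves, which by definition lie in $W_n^\beta\cup V_n^\beta$, and iterating down to level $i+1$) is exactly the bookkeeping the paper leaves implicit. The observation that uniqueness is forced by the divisibility of the block lengths and that the validity of the halving rests on the estimate \eqref{cc5} is accurate.
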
 

Based on the sequence of $\{W_n^\beta(\alpha,N+M)\}_{n=1}^\infty$, define the so-called $\alpha$-Moran set in $\beta$-expansion
\begin{align*}
	&\mathcal{W}_\infty^\beta(\alpha,N+M)\\
	&=\big\{x\in I\colon\left(\vep_{(2^{n-1}-1)(N+M)+1}(x,\beta),\ldots,\vep_{(2^n-1)(N+M)}(x,\beta)\right)\in W_n^\beta(\alpha,N+M),n\geq1\big\}\\
	&=:\prod_{n=1}^\infty W_n^\beta(\alpha,N+M).
\end{align*}
That is, $\mathcal{W}_\infty^\beta(\alpha,N+M)$ is the set of numbers of which the digit sequences concatenate the words in $W_n^\beta(\alpha,N+M)$, $n\geq1$, one by one in the order of natural numbers. 

\begin{lem}\label{lemma alpha M}
	Let $\beta\in B_0$ and $0\leq\alpha<\Lambda(\beta)$. Then
	\begin{align}\label{equation alpha M}
		\lim_{N\to\infty}\dim_H\mathcal{W}_\infty^\beta(\alpha,N+M)=h^\beta(\alpha).
	\end{align}
\end{lem}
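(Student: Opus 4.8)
The plan is to establish the two one-sided estimates $\limsup_{N\to\infty}\dim_H\mathcal{W}_\infty^\beta(\alpha,N+M)\le h^\beta(\alpha)$ and $\liminf_{N\to\infty}\dim_H\mathcal{W}_\infty^\beta(\alpha,N+M)\ge h^\beta(\alpha)$ separately. The guiding principle is that, although $\mathcal{W}_\infty^\beta(\alpha,N+M)$ is assembled from blocks of rapidly growing length $2^{n-1}(N+M)$, its Hausdorff dimension is actually governed by the behaviour at the scale of a single super-digit of length $N+M$. Indeed, every word occurring in the construction ends in $0^M$ and is therefore full by Lemma~\ref{lemma 0 m}, so by Corollary~\ref{corollary full word} any concatenation of such super-digits is again full; consequently the whole set carries a homogeneous Moran structure whose contraction ratio at the super-digit scale is the constant $\beta^{-(N+M)}$. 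Recognising this constant-ratio structure is what lets Lemma~\ref{lemma FWW} deliver a sharp lower bound, since working instead at the block scale $2^{n-1}(N+M)$ would make the ratios tend to $0$, where the lemma gives only the weak inequality $\dim_H\mathcal{M}\ge s$.

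For the upper bound I would cover $\mathcal{W}_\infty^\beta(\alpha,N+M)$ by the cylinders obtained after fixing the first $n$ blocks. There are exactly $\prod_{j=1}^{n}\card W_j^\beta(\alpha,N+M)$ of them, each full and hence of length $\beta^{-(2^n-1)(N+M)}$. Since every word of $W_n^\beta(\alpha,N+M)$ ends in $0^M$ and has digit sum $[\alpha 2^{n-1}(N+M)]$, one has $\card W_n^\beta(\alpha,N+M)\le\card W^\beta([\alpha 2^{n-1}(N+M)],2^{n-1}(N+M))$, and Lemma~\ref{lemma logcard} (applied with the length $2^{n-1}(N+M)\to\infty$) gives $\log\card W_n^\beta(\alpha,N+M)\le 2^{n-1}(N+M)(\log\beta)(h^\beta(\alpha)+\eta_n)$ with $\eta_n\to0$. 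Summing over $j\le n$ and dividing by $(2^n-1)(N+M)\log\beta$, the weights $2^{j-1}$ concentrate on the last terms, so the natural-cover exponent tends to $h^\beta(\alpha)$; thus $\dim_H\mathcal{W}_\infty^\beta(\alpha,N+M)\le h^\beta(\alpha)$ for every $N$.

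For the lower bound I would fix one point $x_0\in\mathcal{W}_\infty^\beta(\alpha,N+M)$ (the set is nonempty because each $W_n^\beta(\alpha,N+M)$ is nonempty by \eqref{cc5}) and record, via Lemma~\ref{remark decompose}, which of its length-$(N+M)$ super-digits have digit sum $[\alpha(N+M)]$ and which have digit sum $[\alpha(N+M)]+1$. I then define $\mathcal{M}'$ to be the set of points obtained from $x_0$ by replacing, independently at each super-digit position, that super-digit by an arbitrary full word of the same digit sum, that is, by an arbitrary element of $W_1^\beta(\alpha,N+M)$ at the positions of the first type and of $V_1^\beta(\alpha,N+M)$ at those of the second. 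Because membership in $W_n^\beta(\alpha,N+M)$ depends only on the digit sums at every dyadic scale, and such a replacement leaves all these sums unchanged, one checks $\mathcal{M}'\subset\mathcal{W}_\infty^\beta(\alpha,N+M)$. The set $\mathcal{M}'$ is a homogeneous Moran set with the constant ratio $\beta^{-(N+M)}$ and with at least $\min\{\card W_1^\beta(\alpha,N+M),\card V_1^\beta(\alpha,N+M)\}$ admissible choices at each level; since $\inf_k c_k=\beta^{-(N+M)}>0$, Lemma~\ref{lemma FWW} yields
\[\dim_H\mathcal{W}_\infty^\beta(\alpha,N+M)\ge\dim_H\mathcal{M}'\ge\frac{\log\min\{\card W_1^\beta(\alpha,N+M),\card V_1^\beta(\alpha,N+M)\}}{(N+M)\log\beta}.\]
Letting $N\to\infty$ and invoking Lemma~\ref{lemma logcard} for $\card W_1^\beta$ together with Corollary~\ref{formula nm1} for $\card V_1^\beta$, both exponents tend to $h^\beta(\alpha)$, so the right-hand side tends to $h^\beta(\alpha)$. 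Combining the two bounds proves \eqref{equation alpha M}.

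The step I expect to be the main obstacle is the justification that $\mathcal{M}'\subset\mathcal{W}_\infty^\beta(\alpha,N+M)$ together with the verification that $\mathcal{M}'$ is genuinely a constant-ratio homogeneous Moran set. One must argue carefully, using the recursive definition of $W_n^\beta$ and $V_n^\beta$ and Lemma~\ref{remark decompose}, that membership in each $W_n^\beta(\alpha,N+M)$ is equivalent to a prescription of digit sums at all dyadic scales, so that digit-sum-preserving swaps of individual super-digits keep a point inside the set; and one must confirm via Corollary~\ref{corollary full word} that the resulting cylinders have exactly the length $\beta^{-k(N+M)}$ required for the homogeneous structure. Choosing to encode the dimension at the super-digit scale, rather than the block scale where the Moran lower bound degenerates, is the crux of the whole argument.
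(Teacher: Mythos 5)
Your proof is correct, and its core mechanism --- controlling the dimension at the super-digit scale $N+M$ through $\card W_1^\beta(\alpha,N+M)$ and $\card V_1^\beta(\alpha,N+M)$ and then letting $N\to\infty$ via Lemma~\ref{lemma logcard} and Corollary~\ref{formula nm1} --- is the same as the paper's, which sandwiches $\mathcal{W}_\infty^\beta(\alpha,N+M)$ between the full Moran sets $\mathcal{W}^\beta([\alpha(N+M)],N+M)$ (or the minimum of the two single-sum sets when $\alpha\geq\alpha^\ast(\beta)$) and $\mathcal{W}^\beta\big(\big[[\alpha(N+M)],[\alpha(N+M)]+1\big],N+M\big)$. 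Your lower bound via the type-pattern-preserving homogeneous Moran subset $\mathcal{M}'$ is in effect the missing justification of the paper's asserted left-hand inequality, and your uniform use of $\min\{\card W_1^\beta,\card V_1^\beta\}$ removes the paper's case split at $\alpha^\ast(\beta)$ (only watch the degenerate situation where the type pattern never uses $V$-blocks, in which case the bound should read $\card W_1^\beta$ alone). The genuinely different ingredient is your upper bound: instead of passing to the superset $\mathcal{W}^\beta\big(\big[[\alpha(N+M)],[\alpha(N+M)]+1\big],N+M\big)$, whose dimension only approaches $h^\beta(\alpha)$ as $N\to\infty$, you cover directly by the $n$-block cylinders and exploit the geometric concentration of the weights $2^{j-1}$; this yields the stronger conclusion $\dim_H\mathcal{W}_\infty^\beta(\alpha,N+M)\leq h^\beta(\alpha)$ for every fixed $N$ (consistent with the inclusion $\mathcal{W}_\infty^\beta(\alpha,N+M)\subset E^\beta(\alpha)$ and Lemma~\ref{lemma e beta alpha}), and it does require the $\limsup$ form of Lemma~\ref{lemma logcard}, which the paper does establish. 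Both routes are valid and reach \eqref{equation alpha M}.
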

\begin{proof}
	For the case $0\leq\alpha<\alpha^*(\beta)$, take $N$ to be large enough such that 
	\[[\alpha(N+M)]+1<[\alpha^*(\beta)(N+M)].\] 
	Then by the structures of sequences in $\mathcal{W}_\infty^\beta(\alpha,N+M)$, described in Lemma~\ref{remark decompose}, we have
	\begin{equation}
	\begin{split}\label{inequality alpha N}
		&\dim_H\mathcal{W}^\beta([\alpha (N+M)],N+M)\leq\dim_H\mathcal{W}_\infty^\beta(\alpha,N+M)\\
		&\leq\dim_H\mathcal{W}^\beta\big(\big[[\alpha(N+M)],[\alpha(N+M)]+1\big],N+M\big).
	\end{split}
	\end{equation}
   Since
	\begin{align*}
		\begin{split}
			&\dim_H\mathcal{W}^\beta\big(\big[[\alpha(N+M)],[\alpha (N+M)]+1\big],N+M\big)\\ 
			&=\frac{\log\Big( w_M^\beta(\alpha,N)+  w_M^\beta\big(\alpha+\frac{1}{N+M},N\big)\Big)}{(\log\beta)(N+M)}\\
			&\leq\frac{\log\Big(2\max\Big\{ w_M^\beta(\alpha,N),w_M^\beta\big(\alpha+\frac{1}{N+M},N\big)\Big\}\Big)}{(\log\beta)(N+M)},
		\end{split}
	\end{align*}
	by letting $N\to\infty$ we have 
	\begin{align}\label{lemma 471}
		\lim_{N\to\infty}\dim_H\mathcal{W}^\beta\big(\big[[\alpha(N+M)],[\alpha (N+M)]+1\big],N+M\big)\leq h^\beta(\alpha) 
	\end{align}
    according to Lemma~\ref{lemma logcard} and Corollary \ref{formula nm1}. Moreover, we have
    \begin{align*}\label{lemma 472}
    	 \lim_{N\to\infty}\dim_H\mathcal{W}^\beta([\alpha (N+M)],N+M)=h^\beta(\alpha) 
    \end{align*}
    by Lemma~\ref{lemma logcard}. This, together with \eqref{inequality alpha N} and \eqref{lemma 471}, leads to the conclusion~\eqref{equation alpha M}.
	
	On the other hand, for the remainder case $\alpha^*(\beta)\leq\alpha<\Lambda(\beta)$, we have similarly that
	\begin{equation*}
	\begin{split} 
	&\min\left\{\dim_H\mathcal{W}^\beta([\alpha (N+M)],N+M),\dim_H\mathcal{W}^\beta([\alpha (N+M)]+1,N+M)\right\}\\
	&\leq\dim_H\mathcal{W}_\infty^\beta(\alpha,N+M)\\
	&\leq\dim_H\mathcal{W}^\beta\big(\big[[\alpha(N+M)],[\alpha(N+M)]+1\big],N+M\big).
	\end{split}
	\end{equation*}
	Then we can prove this case by Corollary \ref{formula nm1} and the same discussion as that for the foregoing case.
	
	The proof is completed now.
\end{proof}

\section{Two lemmas}

In this section, we will present two lemmas for the proof of Theorem~\ref{theorem main theorem}, i.e., the following Lemma \ref{lemma density} and Lemma \ref{lemma main theorem}. The first one is related to subsets of $\mathbb{N}$ with zero density and the second one describes the lower bound of Hausdorff dimension of $ER_\phi^\beta(\alpha)$.

Let $\mathbb{M}$ be a subset of $\mathbb{N}$ and write its complementary set of  $\mathbb{N}$ as
$\mathbb{N}\backslash\mathbb{M} =\{n_1<n_2<\ldots\}$. Suppose there are a set $D\subset I$ and a mapping
$\phi_\mathbb{M} \colon D\to I$ such that the corresponding digit sequences satisfy
\[(\vep_1(x,\beta),\vep_2(x,\beta),\ldots)\in\Sigma_D\mapsto(\vep_{n_1}(x,\beta),\vep_{n_2}(x,\beta),\ldots)\in\Sigma_\beta,\] 
where $\Sigma_D$ denotes the set of digit sequences of numbers in $D$, i.e., \[\Sigma_D=\{(\vep_1(x,\beta),\vep_2(x,\beta),\ldots)\in\Sigma_\beta\colon x\in D\}.\]
If there exist such pair of set $D$ and mapping $\phi_\mathbb{M}$, then we call the mapping $\phi_\mathbb{M}$ \emph{maps well} on the set $D$.
Given such set $D\subset I$ and mapping $\phi_\mathbb{M}$, we may obtain another set
\[\phi_\mathbb{M} (D)=\{\phi_\mathbb{M} (x)\colon x\in D\}.\] 
In addition, we call the set $\mathbb{M} $ is of \emph{density zero} in $\mathbb{N}$ if
\[\lim_{n\to\infty}\frac{\card\{i\in\mathbb M\colon i\leq n\}}{n}=0.\]
Then the relation between the sizes of $D$ and $\phi_\mathbb{M} (D)$ can be described as follows.

\begin{lem}\label{lemma density}
	Let $\beta\in B_0$, $D\subset I$ and $\mathbb{M} $ be of density zero in $\mathbb{N}$. If the mapping $\phi_\mathbb{M}$ maps well on $D$, then we have
	\[\dim_H\phi_\mathbb{M} (D)=\dim_HD.\]
\end{lem}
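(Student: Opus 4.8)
The plan is to show that $\phi_{\mathbb M}$ does not change Hausdorff dimension by establishing the two inequalities $\dim_H\phi_{\mathbb M}(D)\le\dim_H D$ and $\dim_H\phi_{\mathbb M}(D)\ge\dim_H D$ separately. The key point is that deleting the digits indexed by a density-zero set $\mathbb M$ should cost nothing asymptotically in terms of cylinder lengths. Since $\beta\in B_0$, Lemma~\ref{lemma 0 m} gives a uniform constant $M$ controlling the deviation of cylinder lengths from the ideal value $\beta^{-n}$, so on a logarithmic scale a cylinder of rank $n$ has $|I_n(x)|\approx\beta^{-n}$ up to the bounded factor $\beta^{-M}$. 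The density-zero hypothesis will be used precisely to show that the number of deleted indices up to rank $n$, say $k(n)=\card\{i\in\mathbb M\colon i\le n\}$, satisfies $k(n)/n\to0$, so that comparing a cylinder of $x\in D$ at rank $n$ with the corresponding cylinder of $\phi_{\mathbb M}(x)$ at rank $n-k(n)$ introduces only a $\beta^{o(n)}$ distortion.

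\textbf{Transferring a cover (the upper bound).}
First I would prove $\dim_H\phi_{\mathbb M}(D)\le\dim_H D$. Fix $s>\dim_H D$ and a small $\eta>0$; I want to produce an $(s+\eta)$-cover of $\phi_{\mathbb M}(D)$ of arbitrarily small mesh from one of $D$. Given any efficient cover of $D$ by cylinders $I_n(x)$ with $x\in D$, the mapping $\phi_{\mathbb M}$ sends the digit block $(\vep_1(x,\beta),\ldots,\vep_n(x,\beta))$ to the block $(\vep_{n_1}(x,\beta),\ldots,\vep_{n_j}(x,\beta))$ consisting of those indices $n_i\le n$ not in $\mathbb M$; there are $n-k(n)$ of them, and these determine a cylinder $J$ of rank $n-k(n)$ for the image point. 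By Lemma~\ref{lemma 0 m}(2) we have $|I_n(x)|\ge\beta^{-(n+M)}$ and $|J|\le\beta^{-(n-k(n))}$, so
\begin{align*}
  |J|^{s+\eta}\le\beta^{-(n-k(n))(s+\eta)}
  =\beta^{k(n)(s+\eta)}\,\beta^{-n(s+\eta)}
  \le\beta^{k(n)(s+\eta)+(M+1)s}\,|I_n(x)|^{s}.
\end{align*}
Because $\mathbb M$ has density zero, $k(n)=o(n)$, and along a cover with $n\to\infty$ the factor $\beta^{k(n)(s+\eta)}$ is absorbed into $\beta^{n\eta}$; summing over the cylinders in the cover and letting the mesh tend to $0$ shows $\mathbb H^{s+\eta}(\phi_{\mathbb M}(D))<\infty$, hence $\dim_H\phi_{\mathbb M}(D)\le s+\eta$. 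Letting $\eta\to0$ and $s\downarrow\dim_H D$ gives the bound. The reverse inequality is obtained symmetrically: since $\phi_{\mathbb M}$ is a bijection of digit sequences onto $\Sigma_\beta$ when restricted to $\Sigma_D$, one reverses the roles of $D$ and $\phi_{\mathbb M}(D)$, reconstructing an $x\in D$ from its image by reinserting the deleted digits (whose positions in $\mathbb M$ are fixed in advance), and the same length comparison, now with the roles of $n$ and $n-k(n)$ swapped, yields $\dim_H D\le\dim_H\phi_{\mathbb M}(D)$.

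\textbf{Main obstacle.}
I expect the technical heart to be making the length comparison rigorous rather than merely heuristic. Two issues must be handled carefully. First, one must verify that the map on cylinders is genuinely well defined in both directions: the hypothesis that $\phi_{\mathbb M}$ \emph{maps well} on $D$ guarantees that the retained digits $(\vep_{n_1},\vep_{n_2},\ldots)$ form an admissible sequence in $\Sigma_\beta$, but one must also confirm that the finite blocks at each rank correspond to genuine cylinders and that no collision occurs, so that the image cover really does cover $\phi_{\mathbb M}(D)$. Second, the bound $k(n)/n\to0$ must be promoted to the statement that $\beta^{k(n)(s+\eta)}\le\beta^{n\eta}$ for all large $n$; this is where density zero is essential and where the uniformity of $M$ (from $\beta\in B_0$) prevents the deleted-digit distortion from accumulating beyond control. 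Once these two points are secured, the summation and passage to the Hausdorff measure are routine.
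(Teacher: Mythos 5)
Your first direction, $\dim_H\phi_{\mathbb M}(D)\le\dim_H D$, is sound: pushing a cylinder cover of $D$ forward and absorbing the rank loss $k(n)=o(n)$ into $\beta^{n\eta}$ (with $\beta\in B_0$ supplying the two-sided length bound $\beta^{-(n+M)}\le|I_n|\le\beta^{-n}$) is a correct, if slightly repackaged, version of what the paper does. The paper instead observes that the same estimate says $\phi_{\mathbb M}$ is $\bigl(N(1-\epsilon)/(N+1)\bigr)$-H\"older for the metric $d$ once $n\ge N_0$, and then quotes the standard H\"older mapping property of Hausdorff dimension from Falconer; that route avoids your cylinder-cover reduction but is otherwise the same computation.

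The genuine gap is in the reverse inequality $\dim_H D\le\dim_H\phi_{\mathbb M}(D)$. This direction is \emph{not} obtained by ``the same length comparison with the roles of $n$ and $n-k(n)$ swapped,'' because the situation is not symmetric: forward, one cylinder of rank $n$ maps into a single cylinder of rank $n-k(n)$, but backward, the preimage $\phi_{\mathbb M}^{-1}\bigl(I_m(\bar x)\bigr)\cap D$ of a single image cylinder of rank $m$ is a union of many cylinders of rank $n_m=m+k$ --- one for each admissible way of filling in the $k$ deleted digits at the positions of $\mathbb M$ below $n_m$ --- and these preimage cylinders are typically far apart (e.g.\ if $1\in\mathbb M$ the preimage meets cylinders with different first digits), so the preimage cannot itself serve as one covering set. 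One must therefore count the preimage cylinders; the paper bounds their number by $\beta^{[\delta l_j/(1-\delta)]+1}/(\beta-1)$ using R\'enyi's estimate $\card\Sigma_\beta^k\le\beta^{k+1}/(\beta-1)$, and it is precisely here that density zero is used a second time, to make this multiplicity $\beta^{o(m)}$ and hence absorbable into $\beta^{m\eta}$. Your appeal to $\phi_{\mathbb M}|_{\Sigma_D}$ being a bijection onto $\Sigma_\beta$ is not among the hypotheses (``maps well'' only asserts that the retained digit sequences land in $\Sigma_\beta$), and even injectivity would not remove the multiplicity, since two preimages of points in the same rank-$m$ image cylinder can still differ in their deleted digits. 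Once you add this counting step, the argument closes and matches part (1) of the paper's proof.
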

\begin{proof}
	(1) To show $\dim_H\phi_\mathbb{M} (D)\geq\dim_H D$. If $\dim_H\phi_\mathbb{M} (D)=r$, then for any $s>t>r$, the
	$t$-Hausdorff measure of $\phi_\mathbb{M} (D)$ is zero, i.e.,
	$\mathbb {H}^{t}(\phi_\mathbb{M} (D))=0$. Therefore, there exists a
	$\delta$-cover $\{I_{l_j}(\bar{x}^j)\}_{j\geq1}$ of $\phi_\mathbb{M} (D)$ with $0<\delta<1$ such that
	\begin{equation}
	\sum_{j\geq1}|I_{l_j}(\bar{x}^j)|^{t}<\infty,
	\end{equation}
	where $\bar{x}^j\in\phi_\mathbb{M} (D)$ and
	$I_{l_j}(\bar{x}^j)$ denotes the cylinder $I_{l_j}(\bar{\vep}_1^j,\ldots,\bar{\vep}_{l_j}^j)$. 
	Here,  
	\[(\bar{\vep}_1^j,\ldots,\bar{\vep}_{l_j}^j)=\big(\vep_1(\bar{x}^j,\beta),\ldots,\vep_{l_j}(\bar{x}^j,\beta\big),\quad j\geq1.\]
	It yields that
	\begin{align}\label{mmm}
		\sum_{j\geq1}\beta^{-(l_j+M)t}<\infty 
	\end{align}
	by (2) of Lemma~\ref{lemma 0 m}. For any $x\in D$, assume that
	$\phi_\mathbb{M} (x)=\bar{x}\in\phi_\mathbb{M} (D)$ with digit sequence $(\bar{\vep}_1,\bar{\vep}_2,\ldots)$.
	Write
	$\phi_\mathbb{M} (I_N(\vep_1,\ldots,\vep_N))=I_n(\bar{\vep}_1,\ldots,\bar{\vep}_n)$.
	Since
	\[\frac{N-n}{N}=\frac{\card\{i\in\mathbb{M} \colon i\leq
		N\}}{N}\to0\]
	as $N\to\infty$, for the above $\delta$ there exists an integer $N_1$
	such that
	\begin{equation}\label{mm}
	\frac{N-n}{N}<\delta,\quad\mbox{i.e.},\quad n\leq N<\frac{n}{1-\delta}\ \text{for}\ N>N_1.
	\end{equation}
	Moreover, we may even require that
	$\delta$ is small enough such that for all $j\geq1$,
	\begin{equation}\label{mmmm}
	l_j>N_1\quad \mbox{and} \quad \frac{l_j}{l_j+M}\big(s-\frac{\delta}{1-\delta}\big)>t.
	\end{equation}
	Since $\phi_\mathbb{M} (D)\subset\bigcup_{j\geq1}I_{l_j}(\bar{x}^j)$, we
	have that
	$D\subset\bigcup_{j\geq1}\phi_\mathbb{M} ^{-1}\left(I_{l_j}(\bar{x}^j)\right)$. Note that, by \eqref{mm} and \eqref{inequality renyi} in Theorem~\ref{theorem renyi},
	the number of cylinders with rank greater than $l_j$ in $\phi_\mathbb{M} ^{-1}\left(I_{l_j}(\bar{x}^j)\right)$ is less than or equal to $\beta^{\left[\frac{\delta l_j}{1-\delta}\right]+1}/(\beta-1)$. Thus, by \eqref{mmm} and~\eqref{mmmm}, we
	have 
	\begin{align*}
		\mathbb
		{H}^{s}_\delta(D)&\leq\sum_{j\geq1}\frac{\beta^{\left[\frac{\delta l_j}{1-\delta}\right]+1}}{\beta-1}
		\beta^{-l_js}\leq\frac{\beta}{\beta-1}\sum_{j\geq1}\beta^{\frac{\delta l_j}{1-\delta}}
		\beta^{-l_js}\\
		&=\frac{\beta}{\beta-1}\sum_{j\geq1}\beta^{-l_j\left(s-\frac{\delta}{1-\delta}\right)}
		\leq\frac{\beta}{\beta-1}\sum_{j\geq1}\big(\beta^{-(l_j+M)}\big)^{\frac{l_j}{l_j+M}\big(s-\frac{\delta}{1-\delta}\big)}\\
		&\leq\frac{\beta}{\beta-1}\sum_{j\geq1}\beta^{-(l_j+M)t}<\infty.
	\end{align*}
	It follows that $\mathbb
	{H}^{s}(D)<\infty$ and then $\dim_HD<s$. Since $s>r$ is arbitrary, we
	obtain that
	$\dim_HD\leq r=\dim_H\phi_\mathbb{M} (D)$.
	
	(2) To show $\dim_H\phi_\mathbb{M} (D)\leq\dim_H D$. For any
	arbitrary $0<\epsilon<1$, since the set
	$\mathbb{M} \subset\mathbb{N}$ is of density zero, we can choose an integer $N_0>M$
	such that 
	\[\frac{\card\{i\in\mathbb{M} \colon i\leq
		n\}}{n}<\epsilon\quad\mbox{for all}\ n\geq N_0.\]
	Take two numbers $x$ and $y$ with $d(x,y)=\beta^{-t}$, where $N\leq t<N+1$ for some $N\geq N_0$.
	Then, by (2) of Lemma~\ref{lemma 0 m}, we have 
	\begin{align*}
		d(\phi_\mathbb{M} (x),\phi_\mathbb{M} (y))&\leq \beta^{-N+\card\{i\in\mathbb{M} \colon i\leq
			N\}}=\left(\beta^{-N}\right)^{1-\frac{\card\{i\in\mathbb{M} \colon i\leq N\}}{N}}\\
		&<\left(\beta^{-(N+1)}\right)^{\frac{N}{N+1}(1-\epsilon)}<d(\vep,\eta)^{\frac{N}{N+1}(1-\epsilon)}.
	\end{align*}
	It means that the mapping $\phi_\mathbb{M}$ is $N(1-\epsilon)/(N+1)$-H\"{o}lder on $D$. So, by Proposition 2.3 in~\cite{F97}, we have \[\dim_H\phi_\mathbb{M} (D)<\frac{N+1}{N(1-\epsilon)}\dim_HD.\]
	This leads to the conclusion $\dim_H\phi_\mathbb{M} (D)\leq\dim_HD$ by the arbitrariness of $\epsilon$ and $N$.
	
	Finally, by combining the above two assertions, we finish the proof.
\end{proof}

Lemma \ref{lemma density} tells us that the Hausdorff dimension of a set will be invariant if the set of deleted positions of digit sequences of numbers is of density zero in $\mathbb{N}$. Certainly, we need to ensure that each new digit sequence, by deleting the set of positions with density zero from the original sequence, is admissible.

Before the presentation of the second lemma, we need to introduce the definition and corresponding properties of slowly varying sequences.

\begin{dfn}[See~\cite{GS,Seneta,BGT}]\label{definition slow increase}
	Let $\theta$ be a function satisfying $\theta(n)>0$ for all $n\geq1$. We call the sequence $\{\theta(n)\}_{n\geq1}$ \emph{slowly varying} if there is a sequence of positive numbers $\{f(n)\}_{n\geq1}$ satisfying
	\begin{align}
		\lim_{n\to\infty}\frac{\theta(n)}{f(n)}=K>0
	\end{align}
	and
	\begin{align}\label{formula slow increase}
		\lim_{n\to\infty}n\left(1-\frac{f(n-1)}{f(n)}\right)=0.
	\end{align}
\end{dfn}

There are some typical slowly varying sequences such as  \[\{C>0\}_{n\geq1},\{\log n\}_{n\geq1},\{\log\log n\}_{n\geq1},\{\arctan n\}_{n\geq1},\{\exp(\ln^\nu n)\}_{n\geq1},\]
where $0<\nu<1$, etc. The following is a list of some basic properties of the slowly varying sequences.

\begin{lem}[See Lemma 2.2 in \cite{CDL}]\label{lem increse}
	Let the sequence $\{\theta(n)\}_{n\geq1}$ be slowly varying. Then
	\begin{enumerate}
		\item\label{inrease 1} the sequence $\{C\theta(n)\}_{n\geq1}$, where $C>0$, is also slowly varying;
		\item\label{inrease 2} $\lim_{n\to\infty}\log \theta(n)/\log n=0$;
		\item\label{inrease 3} $\lim_{n\to\infty} \theta(n)/n=0$.
	\end{enumerate}
\end{lem}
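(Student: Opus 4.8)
The plan is to reduce all three statements to properties of the comparison sequence $\{f(n)\}$ furnished by Definition~\ref{definition slow increase}, since $\theta(n)/f(n)\to K>0$ allows me to replace $\theta$ by $f$ up to a bounded multiplicative factor. For part~\ref{inrease 1} I would simply observe that the \emph{same} $f$ witnesses the slow variation of $\{C\theta(n)\}$: one has $\lim_{n\to\infty}C\theta(n)/f(n)=CK>0$, while the condition \eqref{formula slow increase} does not involve $\theta$ at all, so it continues to hold verbatim. Hence $\{C\theta(n)\}_{n\geq1}$ is slowly varying with the same comparison sequence $f$, and no further work is needed.

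For part~\ref{inrease 2}, I would write $\log\theta(n)=\log K+\log f(n)+o(1)$, so that it suffices to prove $\log f(n)/\log n\to0$. Setting $a_n=1-f(n-1)/f(n)$, the hypothesis \eqref{formula slow increase} says exactly that $na_n\to0$, i.e. $a_n=o(1/n)$; in particular $a_n\to0$, so for all large $n$ the ratio $f(n)/f(n-1)=1/(1-a_n)$ is well defined and close to $1$. Telescoping the logarithm then gives $\log f(n)-\log f(1)=\sum_{k=2}^n b_k$ with $b_k=-\log(1-a_k)$, and since $|b_k|\le 2|a_k|$ once $|a_k|\le 1/2$, each summand satisfies $b_k=o(1/k)$.

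The crux, and the step I expect to be the main obstacle, is to upgrade the pointwise bound $b_k=o(1/k)$ to the partial-sum bound $\sum_{k=2}^n b_k=o(\log n)$, which is a Kronecker/Ces\`aro-type estimate rather than an immediate consequence. Given $\eta>0$, I would choose $K$ with $|b_k|<\eta/k$ for $k>K$; then $\sum_{k=2}^n|b_k|\le C_K+\eta\sum_{k=K+1}^n 1/k\le C_K+\eta\log n$, so $\limsup_{n\to\infty}(\log n)^{-1}\sum_{k=2}^n|b_k|\le\eta$, and letting $\eta\to0$ yields $\log f(n)=\log f(1)+o(\log n)$. This gives $\log f(n)/\log n\to0$ and hence part~\ref{inrease 2}. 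Finally, part~\ref{inrease 3} follows at once: from part~\ref{inrease 2}, for all large $n$ one has $\log\theta(n)<\tfrac12\log n$, so $\theta(n)<n^{1/2}$ and therefore $\theta(n)/n<n^{-1/2}\to0$.
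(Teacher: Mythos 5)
Your argument is correct and complete: part~\ref{inrease 1} follows because the same comparison sequence $f$ witnesses slow variation of $\{C\theta(n)\}$; the telescoping of $\log f(n)$ combined with the Ces\`aro-type estimate (splitting the sum $\sum_k b_k$ at a threshold beyond which $|b_k|<\eta/k$) correctly upgrades $b_k=o(1/k)$ to $\sum_{k\le n}b_k=o(\log n)$, which is indeed the only nontrivial step; and part~\ref{inrease 3} is an immediate consequence of part~\ref{inrease 2}. The paper itself gives no proof of this lemma --- it is quoted verbatim from Lemma 2.2 in \cite{CDL} --- so there is nothing in-text to compare against, but your proof is the standard one and is a correct, self-contained justification of the cited result.
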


Now, we are ready to present the second lemma which just explores the lower bound of Hausdorff dimension of $ER_\phi^\beta(\alpha)$ for the special case $\beta\in B_0$. Based on it, in the last section we will prove Theorem~\ref{theorem main theorem} for general $\beta>1$ using the method of approximation. 

\begin{lem}\label{lemma main theorem}
	Let $\alpha\in I_\beta$, where $\beta\in B_0$. Let the sequence $\{\theta(n)\}_{n\geq1}$ be slowly varying and $\theta(n)\to\infty$ as $n\to\infty$. If $\phi(n)=[\theta(n)]$, $n\geq1$, then we have
	\[\dim_HER_\phi^\beta(\alpha)\geq h^\beta(\alpha)\quad\text{as}\ 0\leq\alpha\leq\alpha^\ast(\beta)\]
	and 
	\[\dim_HER_\phi^\beta(\alpha)=1\quad\text{as}\  \alpha^\ast(\beta)<\alpha\leq\Lambda(\beta).\]
\end{lem}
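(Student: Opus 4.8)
The plan is to split into the two regimes and, in each, to bound $\dim_H ER_\phi^\beta(\alpha)$ from below by constructing an explicit subset of large dimension and then invoking the density-zero reduction of Lemma~\ref{lemma density}. The key observation driving the construction is that the Erd\"{o}s--R\'{e}nyi average $A_\phi(x,\beta)$ is a \emph{windowed} quantity: $A_{n,\phi(n)}$ looks at the maximal digit-sum over all blocks of length $\phi(n)$. Since $\theta(n)\to\infty$ but $\theta(n)/n\to 0$ by part~\ref{inrease 3} of Lemma~\ref{lem increse}, the window size $\phi(n)=[\theta(n)]$ grows to infinity yet occupies a vanishing proportion of the prefix. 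The strategy is therefore to start from the $\alpha$-Moran set $\mathcal{W}_\infty^\beta(\alpha,N+M)$ of Section~4, whose points have every sufficiently long prefix with normalized digit-sum close to $\alpha$, and to arrange that along such points the sliding-window maximum over windows of length $\phi(n)$ also converges to $\alpha$.

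First I would treat the regime $0\le\alpha\le\alpha^\ast(\beta)$. Here the candidate set is (a density-zero modification of) $\mathcal{W}_\infty^\beta(\alpha,N+M)$. By Lemma~\ref{remark decompose}, every point of this set decomposes, at every dyadic scale $2^i(N+M)$, into blocks whose digit-sum is either $[\alpha 2^i(N+M)]$ or $[\alpha 2^i(N+M)]+1$; hence any window of length $\phi(n)$, once $\phi(n)$ is large, has digit-sum within $o(\phi(n))$ of $\alpha\,\phi(n)$, so that $A_{n,\phi(n)}(x,\beta)\to\alpha$. This shows $\mathcal{W}_\infty^\beta(\alpha,N+M)\subset ER_\phi^\beta(\alpha)$ up to admissibility adjustments, and Lemma~\ref{lemma alpha M} gives $\dim_H\mathcal{W}_\infty^\beta(\alpha,N+M)\to h^\beta(\alpha)$ as $N\to\infty$, yielding $\dim_H ER_\phi^\beta(\alpha)\ge h^\beta(\alpha)$. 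For the regime $\alpha^\ast(\beta)<\alpha\le\Lambda(\beta)$ I would instead engineer full dimension: fix a small $\rho>0$ and build points whose digits on a density-zero set of ``control'' positions force the windowed maximum up to exactly $\alpha$ (using that the sup of achievable averages is $\Lambda(\beta)\ge\alpha$), while on the complementary density-one set the digits are left free to realize a set of dimension $1$. Then Lemma~\ref{lemma density}, applied with $\mathbb{M}$ the density-zero set of controlled positions and $\phi_\mathbb{M}$ the deletion map, transfers the full dimension of the free coordinates to $ER_\phi^\beta(\alpha)$, giving $\dim_H ER_\phi^\beta(\alpha)=1$.

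The delicate point throughout is the interplay between the window $\phi(n)$, which is not a fixed length but grows, and the block structure of the Moran construction, whose block lengths $2^i(N+M)$ are fixed once $N$ is chosen. A window of length $\phi(n)$ will in general straddle two (or several) Moran blocks and may also overlap the controlled positions, so I must verify that the \emph{maximum} over all $n-\phi(n)+1$ windows still has the right normalized value: the upper control must not be defeated by a lucky alignment producing a digit-sum exceeding $\alpha\phi(n)$, and the lower control must guarantee some window attains $\alpha\phi(n)$ asymptotically. This is exactly where the slowly varying hypothesis is used: \eqref{formula slow increase} and part~\ref{inrease 2} of Lemma~\ref{lem increse} ensure $\phi(n)$ changes slowly enough ($\log\phi(n)/\log n\to 0$) that, between the scales $2^i(N+M)$ and $2^{i+1}(N+M)$ where the digit-sum budget is pinned, the admissible fluctuation of any window average is $o(1)$.

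The main obstacle I expect is precisely controlling the sliding-window maximum rather than a single average: showing that for a typical point of the constructed set one has
\[
\lim_{n\to\infty}\frac{1}{\phi(n)}\max_{0\le i\le n-\phi(n)}\bigl(S_{i+\phi(n)}(x,\beta)-S_i(x,\beta)\bigr)=\alpha,
\]
uniformly enough across the growing window, while simultaneously preserving admissibility under concatenation (guaranteed for $\beta\in B_0$ by the padding blocks $0^M$ of Lemma~\ref{lemma 0 m} and Corollary~\ref{corollary full word}) and keeping the deleted control positions of density zero so that Lemma~\ref{lemma density} applies. Once this windowed-average estimate is established, combining it with Lemma~\ref{lemma alpha M}, Lemma~\ref{lemma density}, and the monotonicity/continuity of $h^\beta$ from Lemma~\ref{lemma tan} closes both cases.
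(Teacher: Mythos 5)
Your plan reproduces the architecture of the paper's proof in both regimes: for $0\le\alpha\le\alpha^\ast(\beta)$ the paper takes $\mathcal{W}_+^\beta(\alpha,N+M)=W_1^\beta(\alpha,N+M)\times\prod_{n\ge1}W_n^\beta(\alpha,N+M)$, shows it lies in $ER_\phi^\beta(\alpha)$ via the sandwich
\[
\frac{(K-2)[\alpha 2^r(N+M)]}{(K+1)2^r(N+M)}\le A_{n,\phi(n)}(x,\beta)\le\frac{(K+1)\big([\alpha 2^r(N+M)]+1\big)}{K2^r(N+M)}
\]
for $K2^r(N+M)\le\phi(n)<(K+1)2^r(N+M)$, lets $K\to\infty$ and then $r\to\infty$, and concludes with Lemma~\ref{lemma alpha M}; for $\alpha>\alpha^\ast(\beta)$ it inserts blocks of average $\alpha$ (or, when $\alpha=\Lambda(\beta)$, longer and longer prefixes of near-extremal admissible sequences) at a density-zero set of positions and transfers dimension by Lemma~\ref{lemma density}. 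So your outline is the right one.

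Two points, however, are genuine gaps as written. First, in the supercritical regime you cannot leave the density-one positions ``free to realize a set of dimension $1$'': with unconstrained background digits there are admissible windows of length $\phi(n)$ with average arbitrarily close to $\Lambda(\beta)>\alpha$ recurring infinitely often, so such points fail $A_\phi(x,\beta)=\alpha$ and the containment in the level set collapses (indeed different ``free'' points would have different limits). The paper instead constrains the background to $\mathcal{U}^\beta(\alpha^\ast(\beta),n_0+M,\delta_0)$, i.e.\ concatenations of full blocks of length $n_0+M$ whose digit sums lie within $\delta_0(n_0+M)$ of $\alpha^\ast(\beta)(n_0+M)$ with $0<\delta_0<\alpha-\alpha^\ast(\beta)$; by Corollary~\ref{corollary ast beta} this still has dimension $>1-\epsilon$, and one lets $\epsilon\to0$ at the end. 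Second, the sliding-window verification you defer is precisely the substance of the proof and must be supplied: the paper resolves it by the length comparison coming from slow variation, namely $\log\phi(n)/\log n\to0$ hence $\phi(n)\ll n^{1/2}$, so that with insertion positions $l_k\sim 4^kN$ and inserted blocks of length $2^k(N+M)$ one has $\phi(n)<2^k(N+M)$ for $l_k<n\le l_{k+1}$; thus every maximizing window eventually sits inside an inserted block, whose dyadic sub-blocks have digit sums pinned at $[\alpha 2^i(N+M)]$ or $[\alpha 2^i(N+M)]+1$ by Lemma~\ref{remark decompose}, while background windows stay below $\alpha-(\alpha-\alpha^\ast(\beta)-\delta_0)$. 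Without these two ingredients the inclusion $\mathcal{U}^\beta(\alpha,N+M)\subset ER_\phi^\beta(\alpha)$ is not established.
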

\begin{proof}
We will first prove that $\dim_HER_\phi^\beta(\alpha)\geq h^\beta(\alpha)$ when $0\leq\alpha\leq\alpha^\ast(\beta)$ by constructing a suitable subset of $ER_\phi^\beta(\alpha)$ which is related to $\alpha$-Moran sets. And then prove respectively that $\dim_HER_\phi^\beta(\alpha)=1$ when $\alpha^\ast(\beta)<\alpha<\Lambda(\beta)$ and when $\alpha=\Lambda(\beta)$ by constructing Moran subsets with sufficiently large Hausdorff dimensions. 	
	
Case I: $0\leq\alpha\leq\alpha^\ast(\beta)$. Similar to the definition of  $\mathcal{W}_\infty^\beta(\alpha,N+M)$, define the set of numbers
\begin{align}
  \mathcal{W}_+^\beta(\alpha,N+M):=W_1^\beta(\alpha,N+M)\times\prod_{n=1}^{\infty}W_n^\beta(\alpha,N+M).	
\end{align}	
Then we may obtain  
\begin{align}\label{relation subset}
	\mathcal{W}_+^\beta(\alpha,N+M)\subset ER_\phi^\beta(\alpha).
\end{align} 
In fact, for any number $x\in\mathcal{W}_+^\beta(\alpha,N+M)$ and integer $r\geq1$, we can decompose its digit sequence $(\vep_1(x,\beta),\vep_2(x,\beta),\ldots)$ into successive concatenations of digit words of length $2^r(N+M)$. Moreover, the sum of digits in each word is $[\alpha2^r(N+M)]$ or $[\alpha2^r(N+M)]+1$ but the initial one. Fix $r\geq1$, assume that
\[K2^r(N+M)\leq\phi(n)<(K+1)2^r(N+M)\]
for some integer $K$, then
\begin{align}\label{inequality knm1} 
	\frac{(K-2)[\alpha 2^r(N+M)]}{(K+1)2^r(N+M)}\leq A_{n,\phi(n)}(x,\beta)\leq\frac{(K+1)([\alpha 2^r(N+M)]+1)}{K2^r(N+M)}.
\end{align} 
Let $n\to\infty$, then $\phi(n)\to\infty$ and $K\to\infty$. Thus,
\begin{align}\label{inequality knm2}  
	\frac{[\alpha 2^r(N+M)]}{2^r(N+M)}\leq\b{A}_\phi(x,\beta)\leq
	\bar{A}_\phi(x,\beta)\leq\frac{[\alpha 2^r(N+M)]+1}{2^r(N+M)},
\end{align} 
where
\[\b{A}_\phi(x,\beta)=\liminf_{n\to\infty}A_{n,\phi(n)}(x,\beta)\quad\text{and}\quad\bar{A}_\phi(x,\beta)=\limsup_{n\to\infty}A_{n,\phi(n)}(x,\beta).\]
Let $r\to\infty$, then we have $A_\phi(x,\beta)=\alpha$, 
which proves \eqref{relation subset}. 

Consequently, we have
\begin{align} 
	\dim_H\mathcal{W}_+^\beta(\alpha,N+M)\leq\dim_HER_\phi^\beta(\alpha).
\end{align}
Moreover, it is clear that
\begin{align} 
	\dim_H\mathcal{W}_+^\beta(\alpha,N+M)=\dim_H\mathcal{W}_\infty^\beta(\alpha,N+M)
\end{align}
according to the countable stationarity of Hausdorff dimension. Thus,
\[\dim_HER_\phi^\beta(\alpha)\geq\dim_H\mathcal{W}_\infty^\beta(\alpha,N+M).\]
By letting $N\to\infty$ and Lemma \ref{lemma alpha M}, it yields that
\[\dim_HER_\phi^\beta(\alpha)\geq h^\beta(\alpha).\]	

Case II: $\alpha^\ast(\beta)<\alpha<\Lambda(\beta)$. Since $h^\beta(\alpha^\ast(\beta))=1$ by Corollary~\ref{corollary ast beta}, for any $\epsilon>0$, there exists a number $\delta_0$ satisfying with $0<\delta_0<\alpha^\ast(\beta)-\alpha$ and a positive integer $N_0$ such that 
\[\frac{\log\card\left\{(\vep_1,\ldots,\vep_n)\in\Sigma_\beta^n\colon\left(\alpha^\ast(\beta)-\frac{\delta_0}{2}\right)n<\sum_{i=1}^n\vep_i<\left(\alpha^\ast(\beta)+\frac{\delta_0}{2}\right)n\right\}}{(\log\beta)n}>1-\frac\epsilon2\]
for all $n\geq N_0$. Take an integer $n_0>N_0$. Denote
\begin{align*}
  &U^\beta(\alpha^\ast(\beta),n_0+M,\delta_0)\\
  &=\Big\{(\vep_1,\ldots,\vep_{n_0},0^M)\in\Sigma_\beta^{n_0+M}\colon(\alpha^\ast(\beta)-\delta_0)(n_0+M)<\sum_{i=1}^{n_0}\vep_i<(\alpha^\ast(\beta)+\delta_0)(n_0+M)\Big\}.
\end{align*}
Based on $U^\beta(\alpha^\ast(\beta),n_0+M,\delta_0)$, construct the set of numbers
\[\mathcal{U}^\beta(\alpha^\ast(\beta),n_0+M,\delta_0):=U^\beta(\alpha^\ast(\beta),n_0+M,\delta_0)^\infty.\]
Then, by the formula \eqref{formula pqnm2} and the above estimation, we have
\begin{align}\label{formula un0}
	\dim_H\mathcal{U}^\beta(\alpha^\ast(\beta),n_0+M,\delta_0)=\frac{\log\card U^\beta(\alpha^\ast(\beta),n_0+M,\delta_0)}{(\log\beta)(n_0+M)}>1-\epsilon.
\end{align}
for sufficiently large $n_0$.
 
Next, based on $\mathcal{U}^\beta(\alpha^\ast(\beta),n_0+M,\delta_0)$, we will construct a set $\mathcal{U}^\beta(\alpha^\ast(\beta),n_0+M,\delta_0;\alpha,N+M)$, which is also denoted by $\mathcal{U}^\beta(\alpha,N+M)$ for simplicity, in the following.

For each $x(n_0,\delta_0)\in\mathcal{U}^\beta(\alpha^\ast(\beta),n_0+M,\delta_0)$, we first construct, by induction, a number $x^*(\alpha,N+M)$. Write $x^{(0)}=x(n_0,\delta_0)=(\vep_1(x^0,\beta),\vep_2(x^0,\beta),\ldots)$. Suppose we have defined
\[x^{(j)}=\big(\vep_1(x^{(j)},\beta),\vep_2(x^{(j)},\beta),\ldots),\quad\text{for}\ 0\leq j\leq k,\]
then set
\[x^{(k+1)}=\big(\vep_1(x^{(k)},\beta),\dots,\vep_{l_k}(x^{(k)},\beta), 0^Mw_{k+1}^\beta(\alpha,N+M),\vep_{l_k+1}(x^{(k)},\beta),\ldots\big).\]
Here, $w_{k+1}^\beta(\alpha,N+M)\in W_{k+1}^\beta(\alpha,N+M)$ and
\[l_1=N,\ l_k=\big(4^{k-1}N+(2^{k-1}-2)(N+M)+(k-1)\big)M,\quad k\geq2.\]
That is, $x^{(k+1)}$ is obtained by inserting a word $0^Mw_{k+1}^\beta(\alpha,N+M)$ of length $M+2^k(N+M)$ at the position $l_k+1$ of the digit sequence of $x^{(k)}$. It is easy to see that $\{x^{(j)}\}_{j\geq0}$ is a Cauchy sequence. Denote by
\[x^*(\alpha,N+M)=(\vep_1(x^*,\beta),\vep_2(x^*,\beta),\ldots)\]
the corresponding limit point of $\{x^{(j)}\}_{j\geq0}$. In other words, by inserting the sequence of digit words $\{0^Mw_{k+1}^\beta(\alpha,N+M)\}_{k\geq0}$ into the original positions $4^kN$, $k\geq0$, of the digit sequence of number $x(n_0,\delta_0)$, we obtain  $x^*(\alpha,N+M)$.

Then define the set
\begin{align}
	\mathcal{U}^\beta(\alpha,N+M)=\big\{x^*(\alpha,N+M)\in I\colon x(n_0,\delta_0)\in\mathcal{U}^\beta(\alpha^\ast(\beta),n_0+M,\delta_0)\big\}.
\end{align}
It has the following properties:

(1) The set of positions occupied by the sequence $\big\{0^Mw_{k+1}^\beta(\alpha,N+M)\big\}_{k\geq0}$ in each digit sequence of number of $\mathcal{U}^\beta(\alpha^\ast(\beta),n_0,\delta_0)$ is of density zero in $\mathbb{N}$. This is due to the estimation  
\begin{align*}
	&\limsup_{n\to\infty}\frac{\card\big\{i\leq n\colon \vep_i^*(\alpha,N+M)\ \text{is a digit in some}\ 0^Mw_k^\beta(\alpha,N+M), k\geq1\big\}}{n}\\
	&\leq\limsup_{t\to\infty}\frac{(N+M)+\cdots+2^t(N+M)+tM}{l_t}\\
	&=0.
\end{align*}

Now, define the mapping
\begin{align*}
	\phi(\alpha,n_0,\delta_0)\colon\mathcal{U}^\beta(\alpha,N+M)&\to\mathcal{U}^\beta(\alpha^\ast(\beta),n_0+M,\delta_0)\\
	x^*(\alpha,N+M)&\mapsto x(n_0,\delta_0), 
\end{align*}
then the above property (1) implies that
\begin{align}\label{property a}
	\dim_H\mathcal{U}^\beta(\alpha,N+M)=\dim_H\mathcal{U}^\beta(\alpha^\ast(\beta),n_0+M,\delta_0)
\end{align}
by Lemma~\ref{lemma density}.

(2) $\mathcal{U}^\beta(\alpha,N+M)\subset ER_\phi^\beta(\alpha)$. Take $x^*\in\mathcal{U}^\beta(\alpha,N+M)$. Note that every word $w_{k+1}^\beta(\alpha,N+M)$ in $W_{k+1}^\beta(\alpha,N+M)$ can be decomposed into successively concatenated words of length $N+M$, the sum of digits in each word is $[\alpha(N+M)]$ or $[\alpha(N+M)]+1$. Thus, when the word $(\vep_{i+1},\vep_{i+2},\ldots,\vep_{i+\phi(n)})$ appears in some word $w_{k+1}^\beta(\alpha,N+M)$ in the digit sequence of $x^*(\alpha,N+M)$,
\[I_{n,\phi(n)}(x,\beta)=\max_{0\leq i\leq n-\phi(n)}\{S_{i+\phi(n)}(x,\beta)-S_i(x,\beta)\}\]
can reach its maximal value. In fact, this is guaranteed by an estimation of the value of $\phi(n)$ given in the following.

Since the sequence $\{\theta(n)\}_{n\geq1}$ is slowly varying and $\lim_{n\to\infty}\theta(n)=\infty$, we have
\[\lim_{n\to\infty}\frac{\log\phi(n)}{\log n}=\lim_{n\to\infty}\frac{\log[\theta(n)]}{\log n}=\lim_{n\to\infty}\frac{\log \theta(n)}{\log n}=0\]
by the property (2) in Lemma~\ref{lem increse}. Thus, there exists a number $L>0$ such that
\[\frac{\log\phi(n)}{\log n}\ll\frac{1}{2},\quad \text{i.e.},\quad \phi(n)\ll n^{\frac{1}{2}},\quad \forall n>L.\]
Take $N$ to be sufficiently large such that
\[(N+M)^{\frac{1}{2}}>\max\{2,\sqrt{L}\}.\]
If $l_k<n\leq l_{k+1}$ for some integer $k$, then
\begin{align*}
	\phi(n)&\leq\phi\big(4^k(N+M)+(2^k-2)(N+M)+kM)\big)\\
	&<\phi\left(4^{k+1}(N+M)\right)\ll\left(4^{k+1}(N+M)\right)^{\frac{1}{2}}\\
	&=2^{k+1}(N+M)^{\frac{1}{2}}\\
	&<2^k(N+M).
\end{align*}
It means that the length of digit word $(\vep_{i+1},\ldots,\vep_{i+\phi(n)})$ is far less than that of the word $w_{k+1}^\beta(\alpha,N+M)$ for sufficiently large $N$.

For any $r\geq1$, assume that $K2^r(N+M)\leq\phi(n)<(K+1)2^r(N+M)$ for some integer $K$.  Then we have
\[\frac{(K-1)[\alpha 2^r(N+M)]}{(K+1)2^r(N+M)}\leq A_{n,\phi(n)}(x,\beta)\leq\frac{(K+1)([\alpha 2^r(N+M)]+1)}{K2^r(N+M)}.\]
Let $n\to\infty$, then $\phi(n)\to\infty$ and $K\to\infty$. And then let $r\to\infty$, similar to the discussion in the inequalities \eqref{inequality knm1} and \eqref{inequality knm2}, we may obtain $A_\phi(x,\beta)=\alpha$. 
It leads to the conclusion $\mathcal{U}^\beta(\alpha,N+M)\subset ER_\phi^\beta(\alpha)$.

Property (2) implies that
\[\dim_HER_\phi^\beta(\alpha)\geq\dim_H\mathcal{U}^\beta(\alpha,N+M).\]
This, together with \eqref{formula un0} and \eqref{property a}, yields that
$\dim_HER_\phi^\beta(\alpha)>1-\epsilon$. It proves this case since $\epsilon$ is arbitrary.

Case III: $\alpha=\Lambda(\beta)$. The technique for the proof of this case is similar to that of Case II and we would like to only give the outline here. First, by the definition of $\Lambda(\beta)$, for any $j\geq1$, there exists digit sequence $\eta^{(j)}=(\eta_1^{(j)},\eta_2^{(j)},\ldots)\in\Sigma_\beta$ and an integer $n_j$ such that
\[\Lambda(\beta)-\frac{1}{j}<\frac{\sum_{i=1}^{n_j}\eta_i^{(j)}}{n_j}<\Lambda(\beta)+\frac{1}{j}.\]
Moreover, the sequence $\{n_j\}_{j\geq1}$ can be chosen to be strictly increasing. Then, according to the sequences $\{\eta^{(j)}\}_{j\geq1}$ and $\{n_j\}_{j\geq1}$, construct the following sequence of admissible words:
\[(0^M,\eta_1^{(1)},0^M),(0^M,\eta_1^{(1)},\eta_2^{(1)},0^M),\ldots,(0^M,\eta_1^{(1)},\eta_2^{(1)},\ldots,\eta_{n_1}^{(1)},0^M)\]
\[(0^M,\eta_1^{(2)},\eta_2^{(2)},\ldots,\eta_{n_1+1}^{(2)},0^M),\ldots,(0^M,\eta_1^{(2)},,\eta_2^{(2)},\ldots,\eta_{n_2}^{(2)},0^M),\]
\[\ldots,\ \ldots,\ \ldots,\]
\[(0^M,\eta_1^{(j+1)},\eta_2^{(j+1)},\ldots,\eta_{n_j+1}^{(j)},0^M),\ldots,(0^M,\eta_1^{(j+1)},,\eta_2^{(j+1)},\ldots,\eta_{n_{j+1}}^{(j+1)},0^M),\ldots.\]

Next, for each $x\in\mathcal{U}^\beta(\alpha^\ast(\beta),n_0+M,\delta_0)$ which is defined in the previous Case II, construct the sequence $x\big(\{\eta^{(j)}\},\{n_j\}\big)$ by inserting the above sequence of words into the positions $2^kN$, $k\geq1$, of the digit sequence $x$. In other words, in this case we have
\[l_k=2^kN+2(k-1)M+\frac{(k-1)k}{2},\quad k\geq1.\]
Next, define the Moran set $\mathcal{U}^\beta\big(\alpha^\ast(\beta),n_0+M,\delta_0;\{\eta^{(j)}\},\{n_j\},N+M\big)$, being denoted by  $\mathcal{U}^\beta\big(\{\eta^{(j)}\},\{n_j\},N+M\big)$ for short, as 
\[\mathcal{U}^\beta\big(\{\eta^{(j)}\},\{n_j\},N+M\big)=\left\{x\big(\{\eta^{(j)}\},\{n_j\}\big)\in\Sigma_\beta\colon x\in\mathcal{U}^\beta(\alpha^\ast(\beta),n_0+M,\delta_0)\right\}.\]
Then we can deduce that
\[\dim_H\mathcal{U}^\beta\big(\{\eta^{(j)}\},\{n_j\},N+M\big)=\dim_H\mathcal{U}^\beta(\alpha^\ast(\beta),n_0+M,\delta_0)\]
by Lemma~\ref{lemma density} and \[\mathcal{U}^\beta\big(\{\eta^{(j)}\},\{n_j\},N+M\big)\subset ER_\phi^\beta(\Lambda(\beta)).\] 
It follows that 
\[\dim_H ER_\phi^\beta(\Lambda(\beta))\geq\dim_H\mathcal{U}^\beta(\alpha^\ast(\beta),n_0+M,\delta_0)>1-\epsilon.\]  
Thus, we have $\dim_H ER_\phi^\beta(\Lambda(\beta))=1$ since $\epsilon$ is arbitrary.

The proof is finished now.	
\end{proof} 

\section{Proof of Theorem \ref{theorem main theorem}}

This section is devoted to the proof of Theorem~\ref{theorem main theorem}. In what follows, we would like to introduce a mapping $\pi_\beta$ at first. 

Let $S_\beta$ be the closure of $\Sigma_\beta$ under the product topology on $\Sigma^\infty$ and $\sigma$ be the shift operator on it which is defined as
\[\sigma(\vep_1,\vep_2,\vep_3,\ldots)=(\vep_2,\vep_3,\vep_4,\ldots)\]
for any $(\vep_1,\vep_2,\vep_3,\ldots)\in\Sigma^\infty$. Then $(S_\beta,\sigma|_{S_\beta})$ is a subshift of the symbolic space $(\Sigma^\infty,\sigma)$ and the two systems $(S_\beta,\sigma|_{S_\beta})$ and $(I,T_\beta)$ are metrically isomorphism. Then define the mapping $\pi_\beta\colon S_\beta\to I$ as
\begin{align}\label{definition pi}
	\pi_\beta(\vep)=\sum_{i=1}^{\infty}\frac{\vep_i}{\beta^i},\quad\text{where}\ \vep=(\vep_1,\vep_2,\ldots)\in S_\beta.
\end{align}
It is easy to see that $\pi_\beta$ is a one-to-one mapping for all but countable many digit sequences. In fact, the two digit sequences
\[(\vep_1,\vep_2,\ldots,\vep_n,0,0,\ldots)\quad\text{and}\quad\big(\vep_1,\vep_2,\ldots,\vep_n-1,\vep_1^\ast(x,\beta),\vep_2^\ast(x,\beta),\ldots\big)\]
share the same image, where the word $(\vep_1,\vep_2,\ldots,\vep_n)$, with $\vep_n\geq1$ and $n\geq1$, is admissible and $\big(\vep_1^\ast(x,\beta),\vep_2^\ast(x,\beta),\ldots\big)$ is the $\beta$-expansion of 1.
Moreover, the mapping $\pi_\beta$ is continuous on $S_\beta$ and satisfies $\pi_\beta\circ\sigma=T_\beta\circ\pi_\beta$. 

Now, we are ready to give the proof of Theorem \ref{theorem main theorem} in which the approximation technique is applied to obtain the lower bound of Hausdorff dimension of $ER_\phi^\beta(\alpha)$. One can see the applications of this technique, for instance, in~\cite{TW} and in the proof of Theorem 1.2 in~\cite{LL}.  

\begin{proof}[Proof of Theorem \ref{theorem main theorem}]
	Let $\beta>1$. For the case $0\leq\alpha\leq\alpha^\ast(\beta)$, we will show, respectively, that the upper bound and lower bound of Hausdorff dimension of $ER_\phi^\beta(\alpha)$ are of common value $h^\beta(\alpha)$.  In addition, for the case $\alpha^\ast(\beta)<\alpha\leq\Lambda(\beta)$, we will show that $\dim_HER_\phi^\beta(\alpha)$ is bigger than $\log\beta_m/\log\beta$ for all $m\geq1$.
	
	\textbf{Upper bound}. Let $x\in ER_\phi^\beta(\alpha)$. Since $A_\phi(x,\beta)=\alpha$, for any $\epsilon>0$ there exists an integer $N_0>0$ such that
	\[A_{n,\phi(n)}(x,\beta)<\alpha+\epsilon,\quad \forall n>N_0.\]
	Fix $n_0>N_0$. Since $\phi(n)\to\infty$ as $n\to\infty$, we can take $m$ to be sufficiently large such that 
	\[m>N_0\quad\text{and}\quad\frac{\phi(n_0)}{\phi(m)}<\epsilon.\] 
	Then, by dividing the beginning digit word $(\vep_1(x,\beta),\vep_2(x,\beta),\ldots,\vep_m(x,\beta))$ of $x$ into $[m/\phi(m)]+1$ successive digit words and all, except the last one, are of equal lengths $\phi(m)$, we have
	\begin{align*}
		\begin{split}
			\frac{S_m(x,\beta)}{m}&<\frac{\big[\frac{m}{\phi(m)}\big](\alpha+\epsilon)\phi(m)+
				\Big(\displaystyle{\frac{m-\big[\frac{m}{\phi(m)}\big]\phi(m)}{\phi(n_0)}}+1\Big)(\alpha+\epsilon)\phi(n_0)}{m}\\
			&=(\alpha+\epsilon)+(\alpha+\epsilon)\frac{\phi(n_0)}{m}
			<(\alpha+\epsilon)+(\alpha+\epsilon)\frac{\phi(n_0)}{\phi(m)}\\
			&<(\alpha+\epsilon)(1+\epsilon) 
		\end{split}
	\end{align*}
	for any $\alpha\in I_\beta$. It yields that $\bar{A}(x,\beta)\leq\alpha$ by the arbitrariness of $\epsilon$. So, we have $ER_\phi^\beta(\alpha)\subset\bar{E}^\beta(\alpha)$. Then Proposition~\ref{prop E} gives that
	\[\dim_HER_\phi^\beta(\alpha)\leq\dim_H\bar{E}^\beta(\alpha)=h^\beta(\alpha)\]	
	when $0\leq\alpha\leq\alpha^\ast(\beta)$.
	
	\textbf{Lower bound}. Recall the definition of root
	$\beta_m$ given in \eqref{definition beta m}, which satisfies $\beta_m\in B_1\subset B_0$ for $m$ large enough and
	\[\beta_m\leq\beta,\quad \Sigma_{\beta_{m_1}}\subset\Sigma_{\beta_{m_2}}\subset\Sigma_\beta\  \text{if}\ m_1<m_2\quad\text{and}\quad\lim_{m\to\infty}\beta_m=\beta.\]	
	Put $D_{\beta,\beta_m}=\pi_\beta(\Sigma_{\beta_m})$ and define a mapping $g\colon D_{\beta,\beta_m}\to I$ satisfying
	\[g(x)=\pi_{\beta_m}(\vep(x,\beta)),\quad x\in D_{\beta,\beta_m}.\]
	Then we have the following three conclusions:
	\begin{enumerate}
		\item $\vep(g(x),\beta_m)=\vep(x,\beta)$;
		\item $g\big(ER_\phi^\beta(\alpha)\cap D_{\beta,\beta_m}\big)=ER_\phi^{\beta_m}(\alpha)$;
		\item the function $g$ is $(\log\beta_m/\log\beta)$-Lipschitz on $D_{\beta,\beta_m}$.
	\end{enumerate}
	The first two conclusions are obvious and the last conclusion is followed by Theorem 3.1 in \cite{BL} and Lemma~\ref{lemma b1b0}. Thus, we have 
	\[\dim_HER_\phi^\beta(\alpha)\geq\dim_H\big(ER_\phi^\beta(\alpha)\cap D_{\beta,\beta_m}\big)\geq\frac{\log\beta_m}{\log\beta}\dim_HER_\phi^{\beta_m}(\alpha).\]
	By Lemma \ref{lemma main theorem}, we obtain that \[\dim_HER_\phi^\beta(\alpha)\geq\frac{\log\beta_m}{\log\beta}\quad\text{as}\ \alpha^\ast(\beta)<\alpha\leq\Lambda(\beta)\] 
	and 
	\[\dim_HER_\phi^\beta(\alpha)\geq\frac{\log\beta_m}{\log\beta}h^{\beta_m}(\alpha)\quad\text{as}\ 0\leq\alpha\leq\alpha^\ast(\beta).\]
	Let $m\to\infty$, then we have that $\dim_HER_\phi^\beta(\alpha)\geq h^\beta(\alpha)$ as $0\leq\alpha\leq\alpha^\ast(\beta)$ and $\dim_HER_\phi^\beta(\alpha)=1$ as $\alpha^\ast(\beta)<\alpha\leq\Lambda(\beta)$.
	
	The proof is completed now.
\end{proof}

Take two integer functions $\phi(n)=[c\log n]$ and $\phi(n)=[c\arctan n]$, where $n\geq1$ and $c>0$, for examples. Let $\beta>1$. Define
\begin{align}
	ER_{\log}^\beta(\alpha)=\left\{x\in I\colon\lim_{n\to\infty}A_{n,[c\log n]}(x,\beta)=\alpha\right\},\quad \alpha\in I_\beta,
\end{align}
and 
\begin{align}
	ER_{\arctan}^\beta(\alpha)=\left\{x\in I\colon\lim_{n\to\infty}A_{n,[c\arctan n]}(x,\beta)=\alpha\right\},\quad \alpha\in I_\beta.
\end{align}
It is evident that both of $\{c\log n\}_{n\geq1}$ and $\{c\arctan n\}_{n\geq1}$ are slowly varying sequences by (1) in Lemma~\ref{lem increse}. Moreover, since $c\log n\to\infty$ and $c\arctan n\to\infty$ as $n\to\infty$, by Theorem~\ref{theorem main theorem} we have 
\begin{cor}
	Let $\beta>1$ and $\alpha\in I_\beta$. Then
	\begin{align*}
		\dim_HER_{\log}^\beta(\alpha)=\dim_HER_{\arctan}^\beta(\alpha)=\begin{cases} h^\beta(\alpha),\indent &0\leq\alpha\leq\alpha^\ast(\beta);\\
			1,\indent &\alpha^\ast(\beta)<\alpha\leq\Lambda(\beta).\end{cases} 
	\end{align*}
\end{cor}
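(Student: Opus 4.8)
The plan is to obtain this as an immediate specialization of Theorem~\ref{theorem main theorem}. The corollary is exactly that theorem applied to the two particular functions $\phi(n)=[c\log n]$ and $\phi(n)=[c\arctan n]$, so the entire task reduces to checking that each underlying sequence, $\theta(n)=c\log n$ and $\theta(n)=c\arctan n$, satisfies the two standing hypotheses there: that $\{\theta(n)\}_{n\geq1}$ is slowly varying and that $\theta(n)\to\infty$ as $n\to\infty$. Once both are in hand, formula~\eqref{formula main} with $\phi(n)=[\theta(n)]$ delivers the displayed dimension for $ER_{\log}^\beta(\alpha)$ and $ER_{\arctan}^\beta(\alpha)$ verbatim, with no dimension-theoretic work left to do.

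First I would confirm the slowly varying property. By (1) of Lemma~\ref{lem increse} it is enough to treat the base sequences $\{\log n\}$ and $\{\arctan n\}$, after which the positive constant $c$ is absorbed automatically. For $\{\log n\}$ I would take $f(n)=\log n$ in Definition~\ref{definition slow increase}, so that $\theta(n)/f(n)\equiv1$, and verify \eqref{formula slow increase} through the estimate $n\big(1-\log(n-1)/\log n\big)=n\log\big(n/(n-1)\big)/\log n\to0$. For $\{\arctan n\}$ I would take $f(n)\equiv\pi/2$, so that $\theta(n)/f(n)\to1>0$ while \eqref{formula slow increase} holds trivially because $f$ is constant. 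This secures the slowly varying requirement for both $\{c\log n\}$ and $\{c\arctan n\}$.

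The remaining point, the growth condition $\theta(n)\to\infty$, is where I expect the real difficulty to sit. For $\theta(n)=c\log n$ it is immediate. For $\theta(n)=c\arctan n$, however, one has $\arctan n\to\pi/2$, so $c\arctan n$ is bounded and does \emph{not} tend to infinity; this is the hypothesis that must be scrutinized most carefully, and if it genuinely fails then the arctan assertion cannot be read off from Theorem~\ref{theorem main theorem} and would instead need either an independent treatment of the eventually constant window $\phi(n)=[c\arctan n]$ or the replacement of $\arctan n$ by a divergent slowly varying sequence such as $\log\log n$ or $(\log n)^\nu$ with $0<\nu<1$. Granting the two hypotheses, the conclusion then follows by a single application of Theorem~\ref{theorem main theorem}.
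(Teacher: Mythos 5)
Your proposal follows exactly the paper's route: the paper's entire justification is that both $\{c\log n\}$ and $\{c\arctan n\}$ are slowly varying by (1) of Lemma~\ref{lem increse} and that both tend to infinity, whence Theorem~\ref{theorem main theorem} applies. Your verification of the slowly varying property (via $f(n)=\log n$ and $f(n)\equiv\pi/2$ respectively) is a more careful version of what the paper leaves implicit, and is correct.

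Your scrutiny of the divergence hypothesis is also well placed: the paper asserts ``since $c\log n\to\infty$ and $c\arctan n\to\infty$ as $n\to\infty$,'' but the second claim is simply false, since $\arctan n\to\pi/2$ and hence $c\arctan n$ is bounded; for large $n$ the window $\phi(n)=[c\arctan n]$ is eventually a constant integer, so the hypothesis $\theta(n)\to\infty$ of Theorem~\ref{theorem main theorem} fails and the arctan half of the corollary does not follow from the theorem as stated. So the gap you identified is not in your argument but in the paper itself; the $\log$ case is a legitimate immediate consequence of Theorem~\ref{theorem main theorem}, while the $\arctan$ case would require either a separate treatment of constant-length windows or the substitution of a genuinely divergent slowly varying sequence such as $\log\log n$, exactly as you suggest.
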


\subsection*{Acknowledgment}
This work was finished when the author visited the Laboratoire d'Analyse et de Math\'{e}matiques Appliqu\'{e}es, Universit\'{e} Paris-Est Cr\'{e}teil Val de Marne, France. Thanks a lot for the great encouragement and assistance provided by the laboratory.

\end{document}